\definecolor{darkblue}{rgb}{0,0,0.6}
\title{Regular Finite Decomposition Complexity}
\author[Kasprowski]{Daniel Kasprowski}
\address{Rheinische Friedrich-Wilhelms-Universit\"{a}t Bonn, Mathematisches Institut, Endenicher Allee 60, 53115 Bonn, Germany}
\email{kasprowski@uni-bonn.de}
\thanks{The first author was partially supported by the Max Planck Society.}
 \author[Nicas]{Andrew Nicas}
\address{Department of Mathematics and Statistics, McMaster University, Hamilton, Ontario, Canada L8S 4K1}
\email{nicas@mcmaster.ca}
\thanks{The second author was partially supported by a grant from
the Natural Sciences and Engineering Research Council of Canada}
\author[Rosenthal]{David Rosenthal}
\address{Department of Mathematics and Computer Science, St.\ John's University, 8000 Utopia
Pkwy, Queens, NY 11439, USA}
\thanks{The third author was partially supported by a grant from the Simons Foundation, \#229577.}
\email{rosenthd@stjohns.edu}
\subjclass[2010]{Primary 20F69; Secondary 20F65}
\keywords{Coarse geometry, permanence properties, asymptotic dimension, decomposition complexity, assembly maps, integral Novikov conjecture}
\newcounter{commentcounter}
\date{\today}
\newcommand{\bbR}{\mathbbm{R}}
\newcommand{\bbZ}{\mathbbm{Z}}
\newcommand{\bbN}{\mathbbm{N}}
\newcommand{\bbL}{\mathbb{L}}
\newcommand{\bbK}{\mathbb{K}}
\newcommand{\mcX}{\mathcal{X}}
\newcommand{\mcY}{\mathcal{Y}}
\newcommand{\mcZ}{\mathcal{Z}}
\newcommand{\mcT}{\mathcal{T}}
\DeclareMathOperator{\id}{id}
\DeclareMathOperator{\diam}{diam}
\DeclareMathOperator{\asdim}{asdim}
\DeclareMathOperator{\mesh}{mesh}
\newcommand{\mcV}{\mathcal{V}}
\newcommand{\mcW}{\mathcal{W}}
\newcommand{\mcF}{\mathcal{F}}
\newcommand{\mcA}{\mathcal{A}}
\newcommand{\mcU}{\mathcal{U}}
\newcommand{\mcB}{\mathcal{B}}
\newcommand{\mcP}{\mathcal{P}}
\newcommand{\comment}[1]{}
\newcommand{\fB}{\mathfrak{B}}
\newcommand{\fC}{\mathfrak{C}}
\newcommand{\fD}{\mathfrak{D}}
\newcommand{\wD}{w\mathfrak{D}}
\newcommand{\fR}{\mathfrak{R}}
\numberwithin{equation}{section}
\newtheorem{thm}[equation]{Theorem}
\newtheorem{theorem}[equation]{Theorem}
\newtheorem{prop}[equation]{Proposition}
\newtheorem{cor}[equation]{Corollary}
\newtheorem{lemma}[equation]{Lemma}
\newtheorem{cor+}{Corollary}
\newtheorem{prop+}[cor+]{Proposition}
\theoremstyle{definition}
\newtheorem{example}[equation]{Example}
\newtheorem{defi}[equation]{Definition}
\newtheorem*{thmm}{Theorem}
\newtheorem{rem}[equation]{Remark}
\newtheorem{remark}[equation]{Remark}
 \newtheoremstyle{TheoremNum}
        {}{}              
        {\itshape}                      
        {}                              
        {\bfseries}                     
        {.}                             
        { }                             
        {\normalfont\bfseries\thmname{#1}\thmnote{#3}}
\theoremstyle{TheoremNum}
\begin{document}

\begin{abstract}
We introduce the notion of {\it regular finite decomposition complexity} of a metric family. This generalizes Gromov's finite asymptotic dimension and is motivated by the concept of finite decomposition complexity (FDC) due to Guentner, Tessera and Yu. Regular finite decomposition complexity implies FDC and has all the permanence properties that are known for FDC, as well as a new one called Finite Quotient Permanence. We show that for a collection containing all metric families with finite asymptotic dimension all other permanence properties follow from Fibering Permanence.
\end{abstract}

\maketitle

\baselineskip 15pt

\section{Introduction}
Guentner, Tessera and Yu introduced finite decomposition complexity (FDC) in \cite{rigidity} as a 
generalization of Gromov's finite asymptotic dimension. In this paper, we introduce the notion of {\it regular finite decomposition complexity} (abbreviated as regular FDC). As with FDC, regular FDC is a coarse geometric property of \emph{metric families}. 

A metric family is a set of metric spaces. A metric family $\mcX$ \emph{regularly decomposes} over a collection of metric families $\fC$ if there exists a family $\mcY$ with finite asymptotic dimension and a coarse map $F\colon\mcX\to\mcY$ such that for every (uniformly) bounded subfamily $\mcB$ 
of $\mcY$ the inverse image $F^{-1}(\mcB)$ lies in $\fC$.
We show in \cref{prop:ordinals} that there exists a smallest collection of metric families that is closed under regular decomposition and contains all bounded metric families. We call this collection $\fR$ and say that a metric family in $\fR$ has {\it regular FDC}. Clearly $\fR$ contains all metric families with finite asymptotic dimension. We show in \cref{lem:FDC} that regular FDC implies FDC. 

The collection $\fR$ has many {\it permanence properties}. Informally, a permanence property of a collection $\fC$ of metric families is an operation that when applied to members of $\fC$ yields another member of $\fC$. All of the permanence properties proved for FDC in \cite{fdc} also hold for regular FDC. That is, regular FDC satisfies Coarse Permanence (\cref{thm:coarseinv}), Fibering Permanence (\cref{thm:fibering}), Finite Amalgamation Permanence, Finite Union Permanence, Union Permanence and Limit Permanence (\cref{cor:rFDC_permanence}). Regular FDC also satisfies \emph{Finite Quotient Permanence} (\cref{thm:quotients1}), which FDC is not known to satisfy. 

Fibering Permanence is a particularly important permanence property. A typical special case of  Fibering Permanence is the following.
Assume  $\fC$ satisfies Fibering Permanence and $f\colon X\to Y$ is a coarse map with  $\{Y\}\in\fC$.
If for all $r>0$ the family $\{f^{-1}(B_r(y))\mid y\in Y\}$ is in $\fC$, then $\{X\}\in \fC$.
The general formulation of Fibering Permanence requires the use of general metric families and so \cref{def:fibering} is stated accordingly. We show that if a collection of metric families satisfies Fibering Permanence and contains all metric families with finite asymptotic dimension, then all of the other permanence properties mentioned above, except for Finite Quotient Permanence, are automatically satisfied. That is, we prove the following general theorem (see Theorems~\ref{thm:coarseinv-general}, \ref{thm:amalgamation-general}, \ref{thm:finunion2}, \ref{thm:union} and \ref{thm:limit_perm}).

\begin{thm}
Let $\fC$ be a collection of metric families that satisfies Fibering Permanence and contains all metric families with finite asymptotic dimension. Then $\fC$ satisfies Coarse Permanence, Finite Amalgamation Permanence, Finite Union Permanence, Union Permanence and Limit Permanence.
\end{thm}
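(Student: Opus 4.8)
All five conclusions run through Fibering Permanence, using that, by hypothesis, every metric family of finite asymptotic dimension already lies in $\fC$: to place a metric family $\mcX$ in $\fC$ it suffices to produce a coarse map $F\colon\mcX\to\mcY$ to a metric family $\mcY$ of finite asymptotic dimension such that $F^{-1}(\mcU)$ lies in $\fC$ for every uniformly bounded subfamily $\mcU$ of $\mcY$. The plan is, in each case, to build such an $F$ out of the data in the hypotheses and to verify the preimage condition with the help of the permanence properties already in hand.

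I would dispose of Coarse Permanence first, since it is almost immediate and is used in the other arguments. If $F\colon\mcX\to\mcY$ is a coarse embedding with $\mcY\in\fC$, then, because $F$ is a coarse embedding, the preimage of every uniformly bounded subfamily of $\mcY$ is again uniformly bounded, hence of asymptotic dimension zero, hence in $\fC$; Fibering Permanence then gives $\mcX\in\fC$. A consequence used freely below is that $\fC$ is closed under passing to subfamilies and under coarse equivalence.

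For Finite Union Permanence and Finite Amalgamation Permanence the target is a ray. Write, for each $X$ in the family, $X=A_{X}\cup B_{X}$ with the families $\{A_{X}\}$ and $\{B_{X}\}$ in $\fC$ (for a uniform decomposition into $n$ pieces one uses the target $[0,\infty)^{n-1}$, of asymptotic dimension $n-1$), and set $F(x)=d(x,A_{X})\in[0,\infty)$; this is $1$-Lipschitz, hence a coarse map, and $[0,\infty)$ has asymptotic dimension $1$. If $\mcU$ is a uniformly bounded subfamily, with all members of diameter at most $D$, then a member $U$ of $\mcU$ either meets $[0,D]$, so that $F^{-1}(U)$ lies in the $2D$-neighbourhood of $A_{X}$---coarsely equivalent to $A_{X}$, uniformly in $X$---or lies in $(D,\infty)$, so that $F^{-1}(U)$ misses $A_{X}$ and hence lies in $B_{X}$. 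Using Coarse Permanence together with the hypotheses $\{A_{X}\},\{B_{X}\}\in\fC$, one checks that $F^{-1}(\mcU)\in\fC$, and Fibering Permanence applies.

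The genuinely substantial cases are Union Permanence and Limit Permanence, and in both the crux is the construction of a target of finite asymptotic dimension. For Union Permanence one is given, for each $X$, a decomposition $X=\bigcup_{\lambda}X_{\lambda}$ with the family of all pieces in $\fC$ and, for every $r>0$, a family $\mcW_{r}\in\fC$ of subspaces such that the pieces become $r$-disjoint once $\mcW_{r}$ is removed. The target must record, coherently in $r$, which piece a point lies in after $\mcW_{r}$ is deleted; I would assemble it from these $r$-disjoint decompositions by an anti-\v{C}ech (nerve) construction, the decisive point being that, away from $\mcW_{r}$, any set of diameter at most $r$ meets only one piece, which is precisely what keeps the asymptotic dimension of the target finite. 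The preimage of a uniformly bounded subfamily is then controlled by the family of pieces together with the families $\mcW_{r}$, all of which lie in $\fC$. Limit Permanence follows the same pattern with the easier target $[0,\infty)$ (or $\bbN$): the map records the first stage of the given exhaustion at which a point appears, so that the preimage of a uniformly bounded subfamily involves only finitely many consecutive stages and hence lies in $\fC$. Beyond these target constructions, the remaining difficulty is purely bookkeeping: arranging that all the coarse maps, coarse equivalences, and asymptotic-dimension estimates are uniform over the entire family $\mcX$.
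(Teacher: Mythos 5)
Your Coarse Permanence step is exactly the paper's, but the rest has genuine gaps. First, you have conflated Finite Amalgamation Permanence with Finite Union Permanence. Amalgamation concerns $\mcX=\bigcup_{i=1}^n\mcX_i$ as a union of \emph{families} of spaces --- no individual space is decomposed --- so the map $x\mapsto d(x,A_X)$ has no meaning there, and your proposal contains no argument for it. The paper proves it separately: Fibering applied to the projection $\mcX_1\times\mcX_2\to\mcX_1$ (preimages of bounded subfamilies are coarsely equivalent to $\mcX_2$) puts the product in $\fC$, and then $\mcX_1\cup\mcX_2$ coarsely embeds into the product by fixing basepoints. This omission propagates: in your Finite Union argument the family $F^{-1}(\mcU)$ simultaneously contains sets lying in $2D$-neighbourhoods of the $A_X$'s and sets contained in the $B_X$'s, so to conclude $F^{-1}(\mcU)\in\fC$ you need exactly closure under amalgamation (plus Coarse Permanence), which you have not established; as written the argument is circular. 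The same closure is needed at the preimage step of any version of your Union Permanence argument.

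Second, your Limit Permanence sketch misreads the hypothesis: there is no exhaustion and no ``first stage at which a point appears.'' The assumption is that for \emph{every} $R>0$ each $X_i$ decomposes into $R$-disjoint pieces with the family of all pieces in $\fC$, and the decompositions at different scales need not be nested, so the map to $[0,\infty)$ you describe is not defined from the data. The paper instead re-metrizes each $X_i$ by the ultrametric in which $d_i'(x,y)$ is essentially the least $r$ such that $x$ and $y$ are $r$-connected; the identity maps then form a coarse map to a family of asymptotic dimension $0$, the preimage of the family of $R$-balls is a subfamily of $\{Y_{ij}\}$ for the given $R$-disjoint decomposition, and Coarse Permanence plus Fibering finish. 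Finally, for Union Permanence your anti-\v{C}ech idea is plausible in spirit, but the substance is missing: the given sets $Y_i(r)$ must first be replaced by a nested exhaustion (the paper takes $Y_i'(n)=B_{n-1}(Y_i'(n-1))\cup Y_i(n)$, which requires Finite Union and Coarse Permanence to stay in $\fC$), the target is a family of rooted trees of rays (asymptotic dimension at most $1$), and one must check the recording map is well defined and coarse --- without the nesting, ``which piece a point lies in after removing $\mcW_r$'' is not coherent in $r$, so this is precisely where the difficulty lies, not in bookkeeping.
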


Using this theorem and the results from \cite{guentner} and \cite{ramras-ramsey}, we obtain the following corollary which applies, in particular, to regular FDC.

\begin{cor}
Let $\mcP$ be a property of metric families that is satisfied by all metric families with finite asymptotic dimension and that is closed under fibering. Then the class of (countable) groups with $\mcP$ is closed under extensions, direct unions, free products (with amalgam) and relative hyperbolicity. Furthermore, all elementary amenable groups, all linear groups and all subgroups of virtually connected Lie groups have $\mcP$.
\end{cor}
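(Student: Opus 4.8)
The plan is to apply the preceding theorem to the collection $\fC$ of all metric families having property $\mcP$. By hypothesis $\fC$ contains every metric family of finite asymptotic dimension and is closed under fibering, so the theorem supplies Coarse Permanence, Finite Amalgamation Permanence, Finite Union Permanence, Union Permanence and Limit Permanence for $\fC$ as well. With all of these in hand, the corollary follows by re-running the arguments of \cite{fdc} and \cite{guentner} for extensions, directed unions and free products, and those of \cite{ramras-ramsey} for relative hyperbolicity: none of those arguments uses anything about finite decomposition complexity beyond the permanence properties and the fact that families of finite asymptotic dimension have the property under consideration.

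In more detail, Coarse Permanence first makes ``$\{G\}\in\fC$'' independent of the choice of proper left-invariant metric on a countable group $G$, so that ``$G$ has $\mcP$'' is well defined. For an extension $1\to N\to G\xrightarrow{\pi}Q\to 1$ with $N$ and $Q$ having $\mcP$, I would choose compatible proper left-invariant metrics so that $\pi$ is a coarse map restricting to the given metric on $N$; then for each $r>0$ the members of $\{\pi^{-1}(B_r(y))\mid y\in Q\}$ are isometric translates of $\pi^{-1}(B_r(e))$, which lies within a uniformly bounded Hausdorff distance of $N$, so this family is coarsely equivalent to $\{N\}$ and lies in $\fC$ by Coarse Permanence; Fibering Permanence then gives $\{G\}\in\fC$. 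Directed unions of subgroups having $\mcP$ are handled by Union Permanence, and the fundamental group of a finite graph of groups with vertex groups having $\mcP$ is handled by combining the action on the Bass--Serre tree (asymptotic dimension at most one, hence in $\fC$) with Finite Amalgamation Permanence and Fibering Permanence, exactly as in \cite{fdc} and \cite{guentner}; in particular this covers free products with amalgamation. Relative hyperbolicity is precisely the situation of \cite{ramras-ramsey}, whose main theorem has hypotheses matching the permanence properties now available for $\fC$, and which therefore promotes $\mcP$ from the peripheral subgroups to the whole group. Finally, elementary amenable groups are obtained from finite and abelian groups by iterated extensions and directed unions (by a theorem of Chou), hence have $\mcP$ by the previous cases; and the treatments of linear groups and of discrete subgroups of virtually connected Lie groups in \cite{guentner} and \cite{fdc} reduce these classes, via the relevant structure theory, to actions on buildings and symmetric spaces (of finite asymptotic dimension) together with the permanence properties above, so they go through verbatim.

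I expect the main obstacle to be bookkeeping rather than a new idea: one must check that every step of the cited FDC arguments uses only the permanence properties that the theorem provides for $\fC$, together with the hypothesis that families of finite asymptotic dimension have $\mcP$ — the extension argument above is the prototype for this verification. The single genuinely external ingredient is \cite{ramras-ramsey}, where the task is to confirm that the permanence properties established for $\fC$ are exactly (or imply) the hypotheses under which that paper deduces a property for a relatively hyperbolic group from the same property for its peripheral subgroups.
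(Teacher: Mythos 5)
Your proposal is correct and follows essentially the same route as the paper: apply the preceding theorem to the collection of all metric families with $\mcP$ to obtain Coarse, Finite Amalgamation, Finite Union, Union and Limit Permanence, and then invoke the group-theoretic permanence machinery of \cite{guentner} and \cite{ramras-ramsey} together with the arguments of \cite{rigidity} and \cite{fdc} for elementary amenable, linear, and Lie-subgroup cases, exactly as the paper does. The only slight slip is attributing direct unions to Union Permanence, whereas the standard argument (and \cite{guentner}) uses Limit Permanence via $r$-disjoint cosets of a sufficiently large subgroup in the union; since Limit Permanence is also supplied by the theorem, this does not affect the validity of the argument.
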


The definition of regular decomposition is a special case of fibering, and so we immediately obtain the following theorem.

\begin{thmm}(\cref{thm:smallest_fibering})
The collection of metric families with regular FDC is the smallest collection of metric families that contains all families with finite asymptotic dimension and satisfies Fibering Permanence.
\end{thmm}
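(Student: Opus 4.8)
The plan is to verify that $\fR$ is itself a collection of the asserted type and then to prove its minimality. That $\fR$ contains every metric family with finite asymptotic dimension is immediate: if $\mcX$ has finite asymptotic dimension, then the identity $\mcX\to\mcX$ is a coarse map whose target has finite asymptotic dimension and for which the inverse image of any bounded subfamily is again bounded, hence in $\fR$; thus $\mcX$ regularly decomposes over $\fR$, and so $\mcX\in\fR$ by the defining property of $\fR$ established in \cref{prop:ordinals}. That $\fR$ satisfies Fibering Permanence is exactly \cref{thm:fibering}. Hence $\fR$ contains all metric families with finite asymptotic dimension and satisfies Fibering Permanence.

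For minimality, let $\fC$ be any collection of metric families that contains all metric families with finite asymptotic dimension and satisfies Fibering Permanence; I claim $\fR\subseteq\fC$. First, since bounded metric families have finite asymptotic dimension, $\fC$ contains all bounded metric families. Second, $\fC$ is closed under regular decomposition: if $\mcX$ regularly decomposes over $\fC$ via a coarse map $F\colon\mcX\to\mcY$ with $\mcY$ of finite asymptotic dimension and $F^{-1}(\mcB)\in\fC$ for every bounded subfamily $\mcB$ of $\mcY$, then $\mcY\in\fC$ because it has finite asymptotic dimension, and the preimage condition on $F$ is precisely the hypothesis of Fibering Permanence as formulated in \cref{def:fibering}, so $\mcX\in\fC$. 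Thus $\fC$ is a collection that contains all bounded metric families and is closed under regular decomposition, whence $\fR\subseteq\fC$ by the minimality clause of \cref{prop:ordinals}. Combining the two parts, $\fR$ is the smallest collection of metric families that contains all families with finite asymptotic dimension and satisfies Fibering Permanence.

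I do not expect a genuine obstacle here; the argument is essentially a matter of matching up definitions. The one point that requires care is the claim in the second paragraph that the inverse-image condition defining ``regularly decomposes over $\fC$'' is literally an instance of the fibering hypothesis in \cref{def:fibering}, so that Fibering Permanence applies verbatim rather than merely by analogy; this is exactly what is meant by the remark that regular decomposition is a special case of fibering. Once that identification is made explicit, the theorem follows by assembling \cref{prop:ordinals} and \cref{thm:fibering}.
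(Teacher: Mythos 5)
Your proposal is correct and follows essentially the same route as the paper: it combines \cref{thm:fibering} with the observation that regular decomposition is a special case of fibering, and then invokes the minimality clause of \cref{prop:ordinals} to conclude $\fR\subseteq\fC$. The only cosmetic difference is that you re-derive the containment of finite-asymptotic-dimension families in $\fR$ via the identity map, whereas the paper simply notes $\fR_1$ is exactly that collection; both are immediate.
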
 

When proving injectivity results for the assembly maps in algebraic $K$- and $L$-theory for groups with torsion, one has to work with quotients by finite groups. Hence,
Finite Quotient Permanence is a useful property to have at one's disposal. It is not hard to see that the collection of metric families with finite asymptotic dimension satisfies Finite Quotient Permanence (see \cref{prop:asdim_quotient}). Similarly, it is straightforward to verify that \emph{weak FDC} (a condition that is looser than FDC, also introduced by Guentner, Tessera and Yu), satisfies Finite Quotient Permanence as well. However, the proof techniques used to establish injectivity rely on FDC and it is not known if FDC satisfies Finite Quotient Permanence. While FDC is a special case of weak FDC, it is an open question whether or not weak FDC implies FDC. On the other hand, regular FDC implies FDC (\cref{lem:FDC}) and satisfies Finite Quotient Permanence (\cref{thm:quotients1}). Therefore, we directly obtain the following consequence of \cite[Theorems 8.1 and 9.1]{KasFDC}.

\begin{thm}
\label{thm:inj}
Let $G$ be a group with regular FDC. Assume there exists a finite dimensional model for $\underbar EG$ and a global upper bound on the order of the finite subgroups of $G$. Then the $K$-theoretic assembly map \[H_n^G(\underbar EG;\bbK_\mcA)\to K_n(\mcA[G])\]
is split injective for every additive $G$-category $\mcA$.

If $\mcA$ is an additive $G$-category with involution such that for every finite subgroup $F\leq G$ there exists $N\in\bbN$ with $K_{-N}(\mcA[F])=0$ for all $n\geq N$, then also the $L$-theoretic assembly map
\[H_n^G(\underbar EG;\bbL^{\langle-\infty\rangle}_\mcA)\to L_n^{\langle-\infty\rangle}(\mcA[G])\]
is split injective.
\end{thm}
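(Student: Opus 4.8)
The plan is to derive the theorem as a direct consequence of \cite[Theorems 8.1 and 9.1]{KasFDC}, so the only task is to check that their hypotheses are met. Recall that for a group with torsion the assembly maps in question are the ones assembled over $\underline{E}G$, and that the injectivity machinery of \cite{KasFDC} needs, besides the finite-dimensional model for $\underline{E}G$ and the uniform bound on the orders of the finite subgroups of $G$ (both assumed outright), an FDC-type condition that survives passage to quotients by isometric actions of finite groups. Concretely, at the relevant step one forms the quotients associated to the finite subgroups $F\leq G$ --- this is precisely where the bound on $|F|$ is used to keep things uniform --- and one needs the resulting metric family to have FDC. This is exactly the point at which FDC alone is not known to be adequate, and where Finite Quotient Permanence enters.

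I would then complete the verification in two lines. Since $G$ has regular FDC, \cref{thm:quotients1} (Finite Quotient Permanence) shows that the metric family of finite quotients appearing in \cite{KasFDC} again has regular FDC, and \cref{lem:FDC} shows that a metric family with regular FDC has FDC. Hence that family has FDC, which is the missing hypothesis of \cite[Theorem 8.1]{KasFDC}; applying that theorem yields the split injectivity of $H_n^G(\underline{E}G;\bbK_\mcA)\to K_n(\mcA[G])$ for every additive $G$-category $\mcA$. The $L$-theoretic statement follows in the same way from \cite[Theorem 9.1]{KasFDC}: the coarse-geometric input is identical, and the extra requirement that $K_{-n}(\mcA[F])$ vanish for $n\geq N$ for every finite subgroup $F$ --- needed to handle the $\langle-\infty\rangle$-decoration --- is imposed as a hypothesis, so nothing further is required.

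I do not anticipate a genuine obstacle here: the theorem is a formal corollary, and all the substance lies in \cref{lem:FDC} and \cref{thm:quotients1}. The one point deserving a moment's care is the bookkeeping of matching definitions --- confirming that the family of finite quotients produced by Finite Quotient Permanence as formulated in \cref{thm:quotients1} is exactly the metric family (with the same metrics and the same indexing by finite subgroups) that occurs in the hypotheses of \cite[Theorems 8.1 and 9.1]{KasFDC}. Once that identification is made, the deduction is immediate.
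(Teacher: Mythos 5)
Your proposal is correct and takes essentially the same route as the paper: the paper deduces the theorem directly from \cite[Theorems 8.1 and 9.1]{KasFDC} by noting that regular FDC together with Finite Quotient Permanence and \cref{lem:FDC} gives FDC for the quotient families $\{F\backslash G \mid F\leq G,\ |F|\leq n\}$, which is exactly the fqFDC-type hypothesis those theorems need. The only refinement is the bookkeeping point you already flag yourself: since different finite subgroups act, one applies the uniform-family version \cref{prop:FQPfamily} (giving \cref{thm:quotients}) rather than \cref{thm:quotients1} verbatim.
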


In \cite{KasFDC} this result was obtained for groups with ``fqFDC" instead of regular FDC. A group $G$ has fqFDC if for every $n\in\bbN$ the family $\{F\backslash G\mid F\leq G, |F|\leq n\}$ has FDC. But the technical concept of fqFDC does not satisfy most of the permanence properties discussed above. Thus, one advantage of regular FDC is that is has all the permanence properties that FDC has and is strong enough to imply the above result about injectivity of the assembly maps in algebraic $K$- and $L$-theory.

In \cref{sec:prelim} we gather some preliminary facts about the coarse geometry of metric families and introduce regular finite decomposition complexity. In \cref{sec:asdim} we focus on the asymptotic dimension of metric families, and the asymptotic Assouad-Nagata dimension of metric families, extending several known facts about these notions for metric spaces to metric families. In \cref{sec:extension} we prove an Extension Theorem for metric families that plays an important role in establishing Finite Quotient Permanence for regular FDC. In \cref{sec:inheritance} we study the permanence properties of regular FDC.

The authors would like to thank the referee for several useful comments.


\section{Preliminaries}\label{sec:prelim}

We begin by recalling some elementary concepts from coarse geometry utilizing the language of \emph{metric families}. Guentner, Tessera and Yu introduced metric families in~\cite{rigidity} to define their notion of \emph{finite decomposition complexity} (\cref{def:FDC} below), a generalization of Gromov's finite asymptotic dimension. 

A \emph{metric family} is a set of metric spaces. A \emph{map of metric families}, $F\colon \mcX\rightarrow \mcY$, is a collection of functions $f\colon X\to Y$, where $X\in \mcX$ and $Y\in\mcY$, such that each element in $\mcX$ is the domain of at least one function in $F$. 

The composition $G\circ F\colon \mcX\to \mcZ$ of $G\colon \mcY\to \mcZ$ and $F\colon \mcX\to \mcY$ is the collection
$\{g\circ f\mid f\in F, g\in G, \text{and the domain of $g$ is the range of $f$}\}$.

\begin{defi}\label{coarse}
	Let $F\colon \mcX\rightarrow \mcY$ be a map of metric families.

	\begin{enumerate}[(i)]
		\item $F$ is \emph{coarse} (or \emph{uniformly expansive}) if there exists a non-decreasing function \[\rho\colon [0,\infty)\rightarrow[0,\infty)\] such that for every $X\in \mcX, x,y \in X$,  and $f\colon X\to Y$ in $F$, 
			\begin{equation*}
				d_Y(f(x), f(y))\leq \rho(d_X(x,y)).
			\end{equation*}
We call $\rho$ the \emph{control function for} $F$.

		\item $F$ is \emph{effectively proper} if there exists a proper non-decreasing function \[\delta\colon [0,\infty)\rightarrow[0,\infty)\] such that for every $X\in\mcX, x,y \in X$,  and $f\colon X\to Y$ in $F$,
			\begin{equation*}
				\delta(d_X(x, y))\leq d_Y(f(x), f(y)).
			\end{equation*}

		\item $F$ is a \emph{coarse embedding} if it is both coarse and effectively proper.

		\item $F$ is \emph{coarsely onto} if every $Y\in\mcY$ is the range of some $f\in F$ and if  there exists a $C \geq 0$  such that for every $f \colon X \to Y$ in $F$
and for every $y \in Y$ there exists an $x \in X$ such that $d_Y(f(x), y) \leq C$.

		\item $F$ is \emph{close} to $F' \colon \mcX\rightarrow \mcY$ if there exists a $C \geq 0$ with the property that  for every $f\colon X\to Y$ in  $F$ (respectively, in $F'$)
there exists an $h \colon X\to Y$ in  $F'$  (respectively, in $F$) such
that for all $x \in X$, $d_Y(f(x), h(x))\leq C$.

		\item $F$ is a \emph{coarse equivalence} if it is coarse and there exists a coarse map $G \colon \mcY\rightarrow \mcX$
such that  $G \circ F$ is  close to the identity map of $\mcX$ and $F \circ G$ is close to the identity map of $\mcY$.
	\end{enumerate}
\end{defi}

A {\it subfamily} of a metric family $\mcY$ is a metric family $\mcA$ such that every $A \in \mcA$ is a subspace of some $Y \in \mcY$. The {\it inverse image} of $\mcA$ under the map $F\colon \mcX\rightarrow \mcY$ is the subfamily of $\mcX$ given by $F^{-1}(\mcA)=\left\{ f^{-1}(A) \; | \; A\in \mcA, f\in F \right\}$.

A metric family $\mcX$ is called \emph{bounded} if $\sup_{X\in\mcX}\diam X<\infty$. The collection of all bounded metric families is denoted by $\fB$. 

Recall that a metric space $X$ is the \emph{$r$-disjoint union} of subspaces $\{X_i~|~i\in I \}$ if $X=\bigcup_{i\in I}X_i$, and for every $x\in X_i$ and  $y\in X_j$ with $i\neq j$, $d(x,y)>r$. We denote an $r$-disjoint union by 
\[X=\bigsqcup_{r\text{-disjoint}}\{X_{i}~|~i\in I\}.\]

\begin{defi}\label{def:decompose}
	Let $\fC$ be a collection of metric families. Let $n\in\bbN$ and $r>0$. A metric family $\mcX$ is {\em $(r,n)$-decomposable} over $\fC$ if for every $X \in \mcX$ there is a decomposition $X=X_0 \cup X_1 \cup \cdots \cup X_n$ such that for each~$i$, $0\leq i \leq n$, 
	$$X_i=\bigsqcup_{r\text{-disjoint}}\{X_{ij}~|~j\in J_i\},$$ 
and the metric family $\{X_{ij}~|~X\in\mcX,~0\leq i \leq n,~j\in J_i\}$ is in $\fC$. 

	The metric family $\mcX$ is {\em $n$-decomposable} over $\fC$ if $\mcX$ is $(r,n)$-decomposable over $\fC$ for every $r>0$. 
	
	A metric family $\mcX$ is \emph{strongly decomposable} over $\fC$ if it is $1$-decomposable over $\fC$. It is \emph{weakly decomposable} over $\fC$ if it $n$-decomposable over $\fC$ for some $n\in\bbN$.
\end{defi}

Notice that if $\mcX$ consists of a single metric space $X$, then the saying that $\mcX$ is {\em $n$-decomposable} over $\fB$ is precisely the statement that $X$ has asymptotic dimension at most $n$. Thus, finite asymptotic dimension can be generalized to metric families as follows. It will be discussed further in \cref{sec:asdim}.

\begin{defi}\label{def:asdim}
	Let $n\in\bbN$. The metric family $\mcX$ has {\em asymptotic dimension at most $n$}, denoted $\asdim(\mcX)\leq n$, if $\mcX$ is $n$-decomposable over~$\fB$.\footnote{This is equivalent to Bell and Dranishnikov's definition of a collection of metric spaces having finite asymptotic dimension ``uniformly" (\cite[Section 1]{BD-trees}).}
\end{defi}

Guentner, Tessera and Yu defined finite decomposition complexity as follows.

\begin{defi}\label{def:FDC}
	Let $\fD$ be the smallest collection of metric families containing $\fB$ that is closed under strong decomposition, and let $\wD$ be the smallest collection of metric families containing $\fB$ that is closed under weak decomposition. A metric family in $\fD$ is said to have {\em finite decomposition complexity} (abbreviated to ``FDC"), and a metric family in $\wD$ is said to have {\em weak finite decomposition complexity} (abbreviated to ``weak FDC").
\end{defi}

Guentner, Tessera and Yu provided other equivalent formulations of finite decomposition complexity. One such formulation is the following. Let $\fD_0=\fB$, and for each ordinal $\alpha$ greater than 0, define $\fD_\alpha$ to be the collection of metric families that is strongly decomposable (i.e., 1-decomposable) over $\bigcup_{\beta<\alpha} \fD_\beta$.

\begin{theorem}\cite[Theorem 2.4]{rigidity}\label{rigidity}
	A metric family $\mcX$ has FDC if and only if there exists a countable ordinal $\alpha$ such that $\mcX \in \fD_\alpha$.
\end{theorem}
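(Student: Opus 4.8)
The plan is to prove the two inclusions $\bigcup_\alpha \fD_\alpha \subseteq \fD$ and $\fD \subseteq \bigcup_\alpha \fD_\alpha$, where throughout the union ranges over all countable ordinals $\alpha$. For the first inclusion I would argue by transfinite induction that $\fD_\alpha \subseteq \fD$ for every countable ordinal $\alpha$. The base case $\fD_0 = \fB \subseteq \fD$ holds by definition of $\fD$. For the inductive step, assume $\fD_\beta \subseteq \fD$ for all $\beta < \alpha$. If $\mcX \in \fD_\alpha$, then $\mcX$ is $1$-decomposable over $\bigcup_{\beta<\alpha}\fD_\beta$, hence $1$-decomposable over $\fD$, since $1$-decomposability is monotone in the target collection. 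As $\fD$ is closed under strong decomposition, $\mcX \in \fD$. This completes the induction, so any $\mcX$ lying in some $\fD_\alpha$ has FDC.

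For the reverse inclusion, set $\fD_\infty := \bigcup_\alpha \fD_\alpha$. It is convenient first to record that the sequence $(\fD_\alpha)_\alpha$ is non-decreasing: $\fD_0 = \fB \subseteq \fD_\alpha$ for every $\alpha > 0$ because any bounded family is trivially $1$-decomposable over $\fB$, and $\fD_\beta \subseteq \fD_\alpha$ for $\beta < \alpha$ again by monotonicity of $1$-decomposability. Since $\fD$ is by definition the \emph{smallest} collection containing $\fB$ that is closed under strong decomposition, to prove $\fD \subseteq \fD_\infty$ it suffices to verify that $\fD_\infty$ contains $\fB$ — which is clear since $\fB = \fD_0$ — and that $\fD_\infty$ is itself closed under strong decomposition.

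The remaining point, that $\fD_\infty$ is closed under strong decomposition, is the crux of the argument. Suppose $\mcX$ is $1$-decomposable over $\fD_\infty$. I would first observe that it suffices to test decomposability at integer scales: a union that is $n$-disjoint is also $r$-disjoint for every $r \le n$, so "$\mcX$ is $1$-decomposable over $\fC$" is equivalent to "$\mcX$ is $(n,1)$-decomposable over $\fC$ for every $n \in \bbN$." Hence for each $n \in \bbN$ there is an $(n,1)$-decomposition of $\mcX$ whose associated family $\mcY_n$ lies in $\fD_\infty$, say $\mcY_n \in \fD_{\alpha_n}$ with $\alpha_n$ countable. Now, although the scale parameter a priori ranges over the uncountable set $(0,\infty)$, we have used only the countably many scales $n \in \bbN$, so $\alpha := \bigl(\sup_{n\in\bbN}\alpha_n\bigr) + 1$ is again a countable ordinal with $\alpha_n < \alpha$ for all $n$. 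Therefore each $\mcY_n$ lies in $\bigcup_{\beta<\alpha}\fD_\beta$, so $\mcX$ is $(n,1)$-decomposable over $\bigcup_{\beta<\alpha}\fD_\beta$ for every $n$, hence $1$-decomposable over $\bigcup_{\beta<\alpha}\fD_\beta$ by the reduction above, and thus $\mcX \in \fD_\alpha \subseteq \fD_\infty$ by definition. This establishes the closure, and combining with the first paragraph proves the theorem.

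The main obstacle is exactly the countability of the ordinal $\alpha$ produced in the last step: a naive attempt would take the supremum of the ordinals witnessing the decompositions at \emph{all} real scales $r > 0$, which need not be a countable ordinal, and the whole equivalence hinges on the elementary but essential observation that strong decomposability need only be tested at the countably many integer scales.
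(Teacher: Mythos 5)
Your argument is correct: both inclusions go through, and the crux you isolate---that strong decomposability need only be tested at the countably many integer scales, so the witnessing ordinals $\alpha_n$ are dominated by a single countable ordinal and $\bigcup_{\alpha<\omega_1}\fD_\alpha$ is itself closed under strong decomposition---is exactly the point that makes the restriction to countable ordinals work. The paper does not prove this statement but cites it from Guentner--Tessera--Yu, and your proof is essentially their standard argument, so there is nothing further to compare.
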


Inspired by the work of Guentner, Tessera, and Yu, we introduce the notion of \emph{regular finite decomposition complexity}.

\begin{defi}\label{def:rFDC}
	A metric family $\mcX$ \emph{regularly decomposes} over a collection of metric families $\fC$ if there exists a family $\mcY$ with finite asymptotic dimension and a coarse map $F\colon\mcX\to\mcY$ such that for every bounded subfamily $\mcB$
of $\mcY$, the inverse image $F^{-1}(\mcB)$ lies in $\fC$.
\end{defi}

\begin{defi}\label{def:r_class}
	Let $\fR_0=\fB$, the collection of bounded metric families. For each ordinal $\alpha$ greater than 0, let $\fR_\alpha$ be the collection of metric families that regularly decomposes over $\bigcup_{\beta<\alpha} \fR_\beta$. That is, $\mcX \in \fR_\alpha$ if there exists a metric family $\mcY$ with finite asymptotic dimension and a coarse map $F\colon\mcX\to\mcY$ such that for every bounded subfamily $\mcB$ of $\mcY$, there is an ordinal $\beta<\alpha$ such that $F^{-1}(\mcB)$ lies in $\fR_\beta$. 

	Let $\fR$ be the collection of metric families that belong to $\fR_\alpha$ for some countable ordinal $\alpha$. A metric family in $\fR$ is said to have \emph{regular finite decomposition complexity}.
\end{defi}

It is straightforward to see that $\fR_1$ is the collection of metric families with finite asymptotic dimension. 

The collection $\fR$ has many nice properties. It is a subcollection of $\fD$ (\cref{lem:FDC} below), and it possesses all of the permanence properties that $\fD$ does, plus an additional permanence property concerning quotients by finite groups (\cref{thm:quotients1}) that $\fD$ is not known to satisfy. 

The first of the permanence properties, Coarse Permanence, is proved below. The remaining permanence properties are established in \cref{sec:inheritance}.

\begin{defi}\label{def:coarse_permanence}
A collection of metric families, $\fC$, satisfies \emph{Coarse Permanence} if whenever $\mcY\in\fC$ and $F\colon\mcX\to\mcY$ is a coarse embedding, then $\mcX\in\fC$.
\end{defi}

\begin{rem}
Our definition of Coarse Permanence varies from the definition of Coarse Invariance in \cite{guentner}. To be precise, by \cite[Lemma 6.1]{guentner} a collection satisfies Coarse Permanence if and only if it satisfies Coarse Invariance and Subspace Permanence as defined in \cite{guentner}.
\end{rem}

To show that $\fR$ satisfies Coarse Permanence, we prove the stronger fact that $\fR_\alpha$ satisfies Coarse Permanence for every ordinal $\alpha$.

\begin{thm}\label{thm:coarseinv}
	The collection $\fR_\alpha$ satisfies Coarse Permanence for every ordinal~$\alpha$. In particular:
	\begin{enumerate}
		\item A subfamily of a metric family in $\fR_\alpha$ also lies in $\fR_\alpha$.
		
		\item If $\mcX$ and $\mcY$ are coarsely equivalent metric families, then $\mcX \in \fR_\alpha$ if and only if $\mcY \in \fR_\alpha$.
	\end{enumerate}
\end{thm}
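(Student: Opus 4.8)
The plan is to prove the statement by transfinite induction on $\alpha$, showing simultaneously that $\fR_\alpha$ is closed under coarse embeddings, which immediately yields both numbered consequences (a subfamily inclusion is a coarse embedding, and a coarse equivalence in either direction is in particular a coarse embedding). The base case $\alpha = 0$ is the assertion that $\fB$ is closed under coarse embeddings: if $F\colon\mcX\to\mcY$ is effectively proper with proper control function $\delta$ and $\mcY$ is bounded with $\sup_{Y\in\mcY}\diam Y = D$, then for each $f\colon X\to Y$ in $F$ and $x,x'\in X$ we have $\delta(d_X(x,x'))\le d_Y(f(x),f(x'))\le D$, and since $\delta$ is proper this bounds $d_X(x,x')$ uniformly, so $\mcX\in\fB$. (One should note the harmless point that every $X\in\mcX$ is the domain of some $f\in F$ by the definition of a map of metric families, so no $X$ is left unconstrained.)

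For the inductive step, suppose $\alpha > 0$ and the claim holds for all $\beta < \alpha$. Let $\mcY\in\fR_\alpha$ and let $E\colon\mcX\to\mcY$ be a coarse embedding with control function $\rho$ and effective properness function $\delta$. By definition of $\fR_\alpha$ there is a metric family $\mcZ$ with $\asdim\mcZ \le n$ for some $n$ and a coarse map $G\colon\mcY\to\mcZ$ such that for every bounded subfamily $\mcB$ of $\mcZ$ the inverse image $G^{-1}(\mcB)$ lies in $\fR_\beta$ for some $\beta < \alpha$. Consider the composite coarse map $G\circ E\colon\mcX\to\mcZ$. I claim this exhibits $\mcX$ as regularly decomposing over $\bigcup_{\beta<\alpha}\fR_\beta$. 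Indeed $G\circ E$ is coarse (composition of coarse maps), and $\mcZ$ has finite asymptotic dimension. The key computation is that for a bounded subfamily $\mcB$ of $\mcZ$, the inverse image $(G\circ E)^{-1}(\mcB) = E^{-1}(G^{-1}(\mcB))$, and the restriction of $E$ to $E^{-1}(G^{-1}(\mcB))\to G^{-1}(\mcB)$ is still a coarse embedding (restrictions of coarse embeddings to subfamilies of domain and codomain remain coarse embeddings, with the same $\rho$ and $\delta$). Since $G^{-1}(\mcB)\in\fR_\beta$ for some $\beta<\alpha$, the inductive hypothesis applied to this $\beta$ gives $E^{-1}(G^{-1}(\mcB))\in\fR_\beta$. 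Hence $\mcX$ regularly decomposes over $\bigcup_{\beta<\alpha}\fR_\beta$, i.e. $\mcX\in\fR_\alpha$.

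The one point requiring a little care — and the place I would expect to have to be most careful rather than it being a genuine obstacle — is checking that "inverse image" behaves correctly under composition at the level of metric \emph{families}, namely that $(G\circ E)^{-1}(\mcB)$ and $E^{-1}(G^{-1}(\mcB))$ name the same subfamily of $\mcX$. Unwinding the definitions: $(G\circ E)^{-1}(\mcB) = \{(g\circ f)^{-1}(B)\mid B\in\mcB,\ g\circ f\in G\circ E\}$, and each $(g\circ f)^{-1}(B) = f^{-1}(g^{-1}(B))$; since $g^{-1}(B)$ ranges over the members of $G^{-1}(\mcB)$ as $g$ and $B$ vary, and $f$ ranges over all maps in $E$ with appropriate domain, these two descriptions coincide. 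One should also record that $\mcB$ being a bounded subfamily of $\mcZ$ makes $G^{-1}(\mcB)$ a subfamily of $\mcY$ (not necessarily of the original $\mcZ$), which is exactly the form in which the defining property of $\fR_\alpha$ for $\mcY$ is invoked. With these bookkeeping points in place, both (1) and (2) follow: (1) because the inclusion map of a subfamily is trivially a coarse embedding (take $\rho = \delta = \id$), and (2) because a coarse equivalence $\mcX\to\mcY$ is a coarse embedding, so $\mcY\in\fR_\alpha\Rightarrow\mcX\in\fR_\alpha$, and symmetrically using a coarse inverse.
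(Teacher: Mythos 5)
Your proposal is correct and follows essentially the same argument as the paper: transfinite induction on $\alpha$, with the base case handled by effective properness and the inductive step by composing the coarse embedding with the map $G\colon\mcY\to\mcZ$ witnessing $\mcY\in\fR_\alpha$ and restricting the embedding to inverse images of bounded subfamilies. The extra bookkeeping you record (that $(G\circ E)^{-1}(\mcB)=E^{-1}(G^{-1}(\mcB))$ as subfamilies) is implicit in the paper's proof and harmless to make explicit.
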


\begin{proof}
	We prove the theorem by induction on $\alpha$. If $\mcY\in \fR_0$ (i.e., if $\mcY$ is bounded) and $F\colon\mcX\to\mcY$ is a coarse embedding, then $\mcX$ is also bounded. Hence, the theorem is true for $\alpha=0$. 

	Now suppose the theorem holds for all ordinals $\beta<\alpha$ and let $F\colon \mcX\to \mcY$ be a coarse embedding, with $\mcY\in \fR_\alpha$. By definition, there is a $\mcZ\in\fR_1$ and a coarse map $G\colon \mcY\to \mcZ$ such that for every bounded subfamily of $\mcZ$, the inverse image under $G$ lies in $\fR_\beta$ for some $\beta<\alpha$. Consider the coarse map $G\circ F:\mcX \to \mcZ$. To prove that $\mcX\in\fR_\alpha$, we must show that for every bounded subfamily $\mcB$ of $\mcZ$ the inverse image $(G\circ F)^{-1}(\mcB)$ lies in $\fR_\beta$ for some $\beta<\alpha$. Since $F$ is a coarse embedding, the restriction of $F$ to $(G\circ F)^{-1}(\mcB)$, denoted $F\colon (G\circ F)^{-1}(\mcB)\to G^{-1}(\mcB)$, is also a coarse embedding. Therefore, $(G\circ F)^{-1}(\mcB)\in\fR_\beta$ by the induction hypothesis, and so $\mcX\in\fR_\alpha$.
\end{proof}

Coarse Permanence enables us to prove the following result, analogous to \cref{rigidity}.

\begin{prop}\label{prop:ordinals}
	The collection $\fR$ is the smallest collection of metric families that is stable under regular decomposition and contains $\fB$. More specifically, (1) if a metric family $\mcX$ regularly decomposes over $\fR$, then $\mcX$ is in $\fR$; and (2) if $\fC$ is a collection of metric families that is stable under regular decomposition and contains $\fB$, then $\fR$ is contained in $\fC$.
\end{prop}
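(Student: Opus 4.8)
The plan is to prove the two assertions separately, with part (2) being the more routine transfinite induction and part (1) the one requiring a genuine idea. For part (2): suppose $\fC$ is stable under regular decomposition and contains $\fB = \fR_0$. I would show by transfinite induction on $\alpha$ that $\fR_\alpha \subseteq \fC$. The base case is the hypothesis $\fR_0 = \fB \subseteq \fC$. For the inductive step, suppose $\fR_\beta \subseteq \fC$ for all $\beta < \alpha$, and let $\mcX \in \fR_\alpha$. By \cref{def:r_class} there is a family $\mcY$ with finite asymptotic dimension and a coarse map $F \colon \mcX \to \mcY$ such that every bounded subfamily of $\mcY$ has inverse image under $F$ lying in some $\fR_\beta$ with $\beta < \alpha$; by the induction hypothesis each such inverse image lies in $\fC$. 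Hence $\mcX$ regularly decomposes over $\fC$, so $\mcX \in \fC$ by stability. Taking the union over all countable $\alpha$ gives $\fR \subseteq \fC$.

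For part (1), the point is that if $\mcX$ regularly decomposes over $\fR = \bigcup_{\alpha \text{ countable}} \fR_\alpha$, then $\mcX \in \fR_{\gamma+1}$ for a single countable ordinal $\gamma$, i.e.\ one cannot be forced into an uncountable ordinal. Fix the witness $F \colon \mcX \to \mcY$ with $\asdim(\mcY) < \infty$ and the property that for every bounded subfamily $\mcB$ of $\mcY$, $F^{-1}(\mcB) \in \fR$, hence $F^{-1}(\mcB) \in \fR_{\alpha(\mcB)}$ for some countable ordinal $\alpha(\mcB)$. The naive worry is that the supremum of $\alpha(\mcB)$ over all bounded subfamilies $\mcB$ could be uncountable, since there are a proper class of bounded subfamilies. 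The key observation that defeats this worry is that it suffices to control inverse images of a \emph{cofinal} family of bounded subfamilies: for each $r > 0$ let $\mcB_r$ be the subfamily of $\mcY$ consisting of all subsets of diameter at most $r$ of the spaces in $\mcY$ (equivalently, one can use the metric family of $r$-balls in the spaces of $\mcY$). Every bounded subfamily $\mcB$ of $\mcY$ with $\sup_{B \in \mcB} \diam B \le r$ is a subfamily of $\mcB_r$, so by \cref{thm:coarseinv}(1), if $F^{-1}(\mcB_r) \in \fR_\beta$ then $F^{-1}(\mcB) \in \fR_\beta$ as well (the inclusion of inverse images is a coarse embedding, in fact an isometric inclusion on each space, after passing to subspaces). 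Thus the behaviour of $F$ on \emph{all} bounded subfamilies is governed by the countable collection $\{\mcB_r \mid r \in \bbN\}$.

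With that reduction in hand, set $\alpha_r$ to be a countable ordinal with $F^{-1}(\mcB_r) \in \fR_{\alpha_r}$, and let $\gamma = \sup_{r \in \bbN} \alpha_r$, which is a countable ordinal since it is a countable supremum of countable ordinals. Then for an arbitrary bounded subfamily $\mcB$ of $\mcY$, choosing $r \in \bbN$ with $r \ge \sup_{B \in \mcB} \diam B$, we get $F^{-1}(\mcB) \in \fR_{\alpha_r} \subseteq \fR_\gamma$ (using that $\fR_\beta \subseteq \fR_{\beta'}$ for $\beta \le \beta'$, which is immediate from \cref{def:r_class} since a bounded family trivially regularly decomposes over anything containing $\fB$, or more directly since the defining condition only gets weaker as $\alpha$ grows). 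Therefore $F \colon \mcX \to \mcY$ witnesses $\mcX \in \fR_{\gamma+1}$, and $\gamma + 1$ is countable, so $\mcX \in \fR$. Finally, part (1) says $\fR$ is stable under regular decomposition, and $\fB = \fR_0 \subseteq \fR$, so part (2) applies to $\fC = \fR$ and yields that $\fR$ is the smallest such collection.

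The main obstacle I anticipate is exactly the cofinality argument in part (1): one must be careful that "bounded subfamily" ranges over a proper class, so the reduction to the countable cofinal system $\{\mcB_r\}$ together with the monotonicity supplied by Coarse Permanence (\cref{thm:coarseinv}(1)) is doing the essential work. Everything else is a bookkeeping induction on ordinals. I would also double-check the monotonicity $\fR_\beta \subseteq \fR_\alpha$ for $\beta < \alpha$ is genuinely available — it follows since any $\mcX \in \fR_\beta$ regularly decomposes over $\bigcup_{\delta < \alpha}\fR_\delta$ via the identity map to itself (which has asymptotic dimension $0$ only if $\mcX$ is bounded, so instead: via the constant-style argument, or simply observe $\mcX$ itself, viewed through $F = \id$, has the required inverse-image property with $\beta$ in place of the running ordinal). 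If a clean proof of this monotonicity is not immediate, I would instead phrase part (1) directly: take $\gamma' = \sup_r \alpha_r$ and note $F^{-1}(\mcB) \in \fR_{\beta}$ for some $\beta \le \gamma' < \gamma'+1$ for every bounded $\mcB$, which is literally the condition for $\mcX \in \fR_{\gamma'+1}$.
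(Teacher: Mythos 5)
Your proposal is correct and follows essentially the same route as the paper: part (2) is the identical transfinite induction, and part (1) uses the same cofinality trick — reducing arbitrary bounded subfamilies to the countable collection of radius-$n$ balls (the paper's $\mcB_n$), invoking Coarse Permanence (\cref{thm:coarseinv}) for subfamilies, and taking a countable supremum ordinal $\gamma$ to land in $\fR_{\gamma+1}$. Your fallback phrasing at the end, avoiding any appeal to monotonicity of the $\fR_\alpha$ by citing the definition of $\fR_{\gamma+1}$ directly, is exactly the right way to close the argument.
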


\begin{proof}
	For (1), assume that $\mcX$ regularly decomposes over $\fR$. Then there exists a metric family $\mcY\in\fR_1$ and a coarse map $F\colon \mcX\to \mcY$ such that for every bounded subfamily $\mcB$ of $\mcY$, the inverse image $F^{-1}(\mcB)$ lies in $\fR$; that is, $F^{-1}(\mcB)$ lies in $\fR_\alpha$ for some ordinal $\alpha$. For each $n\in\bbN$, let $\alpha_n$ be an ordinal such that $F^{-1}(\mcB_n)\in\fR_{\alpha_n}$, where $\mcB_n = \{B_n(y)\mid Y\in \mcY, y\in Y\}$ is the subfamily of all balls of radius $n$ in $\mcY$. Let $\gamma$ be an ordinal with $\alpha_n<\gamma$ for all $n\in\bbN$. For every bounded subfamily $\mcB$ of $\mcY$ there exists an $n\in\bbN$ such that $\mcB$ is a subfamily of $\mcB_n$. Thus, $F^{-1}(\mcB)\in\fR_{\alpha_n}$ by Coarse Permanence~\ref{thm:coarseinv}. This shows that in fact $\mcX$ regularly decomposes over $\fR_\gamma$, and so $\mcX\in\fR_{\gamma+1}$. Therefore, $\mcX\in \fR$.

	For (2) we have to show that for every $\alpha$, $\fR_\alpha$ is contained in $\fC$. We do this by induction. By assumption, $\fR_0$ is contained in $\fC$. Now suppose $\fR_\beta$ is contained in $\fC$ for all $\beta<\alpha$. Therefore, by definition, every family $\mcX\in\fR_\alpha$ regularly decomposes over $\fC$. Since $\fC$ is stable under regular decomposition, $\mcX$ is also in $\fC$. Thus, $\fR_\alpha$ is contained in $\fC$. 
\end{proof}

\begin{rem}
The last proposition implies that $\fR$ is the same as the collection of all metric families that belong to $\fR_\alpha$ for any ordinal $\alpha$ without assuming that $\alpha$ is countable.
\end{rem}

\begin{thm}\label{lem:FDC}
	If a metric family $\mcX$ has regular FDC, then $\mcX$ has FDC.
\end{thm}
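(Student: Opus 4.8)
The plan is to prove by transfinite induction that $\fR_\alpha\subseteq\fD$ for every ordinal $\alpha$; since every metric family with regular FDC lies in some $\fR_\alpha$, this gives $\fR\subseteq\fD$. The argument uses two facts about FDC as inputs: (i) every metric family of finite asymptotic dimension has FDC, equivalently $\fR_1\subseteq\fD$ (due to Guentner, Tessera and Yu; see \cite{rigidity}); and (ii) $\fD$ satisfies Fibering Permanence in the metric-family formulation, one of the permanence properties of FDC established in \cite{fdc}.

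For the base case, $\fR_0=\fB\subseteq\fD$ directly from \cref{def:FDC}. For the inductive step, fix $\alpha>0$, assume $\fR_\beta\subseteq\fD$ for all $\beta<\alpha$, and let $\mcX\in\fR_\alpha$. By \cref{def:r_class} there are a metric family $\mcY$ of finite asymptotic dimension and a coarse map $F\colon\mcX\to\mcY$ such that for every bounded subfamily $\mcB$ of $\mcY$ the inverse image $F^{-1}(\mcB)$ lies in $\fR_\beta$ for some $\beta<\alpha$, hence in $\fD$ by the induction hypothesis. In particular $F^{-1}(\mcB_r)\in\fD$ for every $r>0$, where $\mcB_r=\{B_r(y)\mid Y\in\mcY,\ y\in Y\}$. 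By (i), $\mcY\in\fD$. Fibering Permanence for $\fD$, applied to the coarse map $F\colon\mcX\to\mcY$ --- whose target lies in $\fD$ and for which every $F^{-1}(\mcB_r)$ lies in $\fD$ --- now yields $\mcX\in\fD$. This completes the induction, and hence the proof.

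Apart from the appeal to Fibering Permanence, the argument is just bookkeeping with the classes $\fR_\alpha$, analogous to what appears in the proof of \cref{prop:ordinals} (one could, if preferred, invoke Coarse Permanence, \cref{thm:coarseinv}, to reduce arbitrary bounded subfamilies to the families $\mcB_r$, but $\fD$ absorbs all of the relevant inverse images in any case). I expect the step needing the most care to be the verification that Fibering Permanence of FDC is available in exactly the metric-family form required, with a base of finite asymptotic dimension. This input is genuinely needed: pulling back, through the coarse map $F$, an asymptotic-dimension decomposition of $\mcY$ taken at a sufficiently large scale shows only that $\mcX$ is weakly decomposable over $\fD$ (it is $n$-decomposable over $\fD$ for $n=\asdim(\mcY)$), and it is an open problem whether weak decomposability over $\fD$ implies FDC --- equivalently, whether weak FDC implies FDC. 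Thus the finite-asymptotic-dimension hypothesis on the base is used in an essential way, via Fibering Permanence.
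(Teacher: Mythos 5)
Your proof is correct and follows essentially the same route as the paper: the paper's proof likewise combines the fact that metric families of finite asymptotic dimension lie in $\fD$ (the metric-family version of \cite[Theorem 4.1]{fdc}) with the Fibering Theorem for FDC (\cite[Theorem 3.1.4]{fdc}), the transfinite induction over the classes $\fR_\alpha$ being the implicit bookkeeping (cf.\ \cref{prop:ordinals}) that the paper leaves to the reader. Your remarks about needing the metric-family formulation of these inputs and about weak decomposability not sufficing match the paper's own caveats and motivation.
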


\begin{proof}
	This follows directly from the fact that a metric family with finite asymptotic dimension is in $\fD$ (see \cite[Theorem 4.1]{fdc}) and the Fibering Theorem for FDC (\cite[Theorem 3.1.4]{fdc}). Note that \cite[Theorem 4.1]{fdc} is only stated for metric spaces, but the same proof holds for metric families as well.
\end{proof}

There is a more concrete proof of \cref{lem:FDC} that also motivates the name ``regular FDC". Suppose $\mcX$ regularly decomposes over a collection $\fC$. Then there is a coarse map $F\colon \mcX\to \mcY$, where $\asdim(\mcY)<\infty$. By \cite[Proof of Theorem 4.1]{fdc} there is a coarse embedding $\mcY\to \mcT$, where each $T\in\mcT$ is a product of $\asdim (\mcY)+1$ Gromov 0-hyperbolic spaces. A Gromov 0-hyperbolic space strongly decomposes over $\fB$ (since it has finite asymptotic dimension) and therefore we can strongly decompose $\mcT$ over $\fB$ in $\asdim(\mcY)+1$ steps. This decomposition pulls back to a decomposition of $\mcX$ over $\fC$, again in $\asdim(\mcY)+1$ steps. Given a decomposition of a Gromov 0-hyperbolic space $T$ for some $r>0$, the decomposition for $r'>>r$ can be chosen in such a way that each of the decomposition pieces for $r$ is contained in one of the pieces of $r'$. Hence, the decompositions of $\mcX$ for different values of $r$ obtained in this way are related to each other. Generally, different values for $r$ yield completely independent decompositions.


\section{Asymptotic dimension of metric families}\label{sec:asdim}

In this section we generalize some facts about finite asymptotic dimension for metric spaces to metric families. They will be used in the proof of the Extension Theorem~\ref{thm:extension} in \cref{sec:extension}, and in the proofs of the permanence properties for regular FDC in \cref{sec:inheritance}.

There are several equivalent definitions of asymptotic dimension (as can be found, for example, in~\cite[Theorem 19]{bell-dranish-asdim}). In~\cite{NR15} these were generalized to provide three alternative definitions for a metric family to be $n$-decomposable over a collection of metric families. In particular, ~\cite[Proposition 3.1]{NR15} yields the following equivalent definition for a metric family to have finite asymptotic dimension.

Recall that the {\it dimension of a covering}, $\mcU$, of a metric space $X$ is the largest integer $n$ such that every point of $X$ is contained in at most $n+1$ elements of $\mcU$. The {\it Lebesgue number} of $\mcU$, $L(\mcU)$, is at least $\lambda>0$ if for every $x \in X$ the open ball $B_\lambda(x)$ in $X$ is contained in some element of $\mcU$. The {\it mesh} of $\mcU$ is $\mesh(\mcU)=\sup\{\diam(U)~|~U \in \mcU\}$.

\begin{prop}\label{prop:Lebesgue}
	A metric family $\mcX=\{X_i\}_{i\in I}$ has finite asymptotic dimension at most $n$ if and only if for every $\lambda>0$ there exists a cover $\mcU_i$ of $X_i$, for each $i\in I$, such that:
	\begin{enumerate}
		\item the dimension of $\mcU_i$ is at most $n$ for every $i\in I$;
		
		\item the Lebesgue number $L(\mcU_i)\geq\lambda$ for every $i\in I$;
		
		\item $\bigcup_{i\in I}\mcU_i$ is a metric family in $\fB$ (that is, there exists an $R>0$ such that $\mesh(\mcU_i)\leq R$ for every $i \in I$).
	\end{enumerate}
\end{prop}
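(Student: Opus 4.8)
The plan is to reduce the statement to the corresponding characterization of asymptotic dimension for a single metric space and then check that all the estimates can be made uniform in $i\in I$. Recall that \cref{def:asdim} says $\asdim(\mcX)\le n$ means $\mcX$ is $n$-decomposable over $\fB$, i.e.\ for every $r>0$ and every $X_i\in\mcX$ there is a decomposition $X_i=X_i^0\cup\cdots\cup X_i^n$ with each $X_i^k$ an $r$-disjoint union of sets of diameter $\le R(r)$, where the bound $R(r)$ depends only on $r$ and not on $i$. This is exactly the hypothesis of \cite[Proposition 3.1]{NR15} applied to the family, so I would cite that proposition for the abstract equivalence and then only need to translate between the ``decomposition'' language and the ``cover with controlled dimension, Lebesgue number, and mesh'' language, keeping track of uniformity.

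For the forward direction, assume $\asdim(\mcX)\le n$ and fix $\lambda>0$. Apply $n$-decomposability with a parameter $r$ chosen large compared to $\lambda$ (say $r=4\lambda$, following the usual argument for metric spaces): for each $i$ write $X_i=\bigcup_{k=0}^n X_i^k$ with $X_i^k=\bigsqcup_{r\text{-disjoint}}\{X_{ij}^k\mid j\in J_i^k\}$ and all the $X_{ij}^k$ of diameter $\le R$ for a uniform $R=R(r)$. Then take $\mcU_i$ to be the collection of $\lambda$-neighborhoods $N_\lambda(X_{ij}^k)$ in $X_i$, over all $k$ and $j$. Because the $X_{ij}^k$ within a fixed colour $k$ are $r$-disjoint with $r>2\lambda$, their $\lambda$-neighborhoods are pairwise disjoint, so each point lies in at most one set of each colour, hence in at most $n+1$ sets of $\mcU_i$: the dimension is $\le n$. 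The Lebesgue number is $\ge\lambda$ because any ball $B_\lambda(x)$ is contained in $N_\lambda(X_{ij}^k)$ for whichever piece contains $x$. The mesh is $\le R+2\lambda$, a bound independent of $i$, so $\bigcup_i\mcU_i\in\fB$. This gives (1)--(3).

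For the converse, suppose such covers $\mcU_i$ exist for every $\lambda$. Given $r>0$, pick $\lambda$ large relative to $r$ and apply the hypothesis to get covers $\mcU_i$ of dimension $\le n$, Lebesgue number $\ge\lambda$, and uniform mesh bound $\le R$. The standard colouring argument for a single space --- using a nerve-type or ``separated in $n+1$ colours'' construction driven by the dimension and Lebesgue number --- produces from $\mcU_i$ a decomposition $X_i=X_i^0\cup\cdots\cup X_i^n$ into $r$-disjoint unions of sets of diameter controlled by $R$ and $\lambda$, hence by a bound depending only on $r$. Since none of the constants introduced depend on $i$, the resulting decompositions exhibit $\mcX$ as $n$-decomposable over $\fB$, i.e.\ $\asdim(\mcX)\le n$. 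I expect the only real content to be bookkeeping: the single-space implications are classical (as in \cite[Theorem 19]{bell-dranish-asdim} or the cited \cite[Theorem 19]{bell-dranish-asdim}-type statements), so the main --- and essentially only --- obstacle is verifying that every constant ($r$ as a function of $\lambda$, the mesh bound, the disjointness parameter) can be chosen uniformly over the index set $I$; this is immediate here because $n$-decomposability over $\fB$ builds in precisely that uniformity, which is why the proof goes through verbatim from the metric-space case once \cite[Proposition 3.1]{NR15} is invoked.
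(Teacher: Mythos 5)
Your proposal is correct and matches the paper's treatment: the paper offers no separate proof of \cref{prop:Lebesgue}, presenting it as a direct consequence of \cite[Proposition 3.1]{NR15} (with the single-space case going back to \cite[Theorem 19]{bell-dranish-asdim}), which is exactly the reduction you make. Your added translation between $n$-decomposability over $\fB$ and covers with controlled dimension, Lebesgue number and mesh (taking $\lambda$-neighborhoods of $r$-disjoint pieces with $r>2\lambda$ one way, and the standard colouring/shrinking argument the other way, with all constants uniform in $i$) is the standard argument and is sound.
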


Let $X$ be a metric space and $F$ a finite group acting isometrically on $X$.  We will always consider the following metric on the quotient space $F\backslash X$:
\[d(Fx,Fx') \colonequals \min_{h\in F}d_X(x,hx')\]

\begin{prop}\label{prop:asdim_quotient}
	Let $\mcX$ be a metric family and let $F$ be a finite group that acts isometrically on every $X\in\mcX$. Let $F\backslash \mcX:=\{F\backslash X\mid X\in\mcX\}$. Then
	\[\asdim(F\backslash \mcX) \leq |F|(\asdim(\mcX)+1)-1.\]
\end{prop}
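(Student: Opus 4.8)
The plan is to bound the asymptotic dimension of $F\backslash\mcX$ by lifting covers from the quotient spaces back up to the original spaces, using the characterization of finite asymptotic dimension in terms of Lebesgue number and mesh from \cref{prop:Lebesgue}. First I would fix $n = \asdim(\mcX)$ and $\lambda > 0$, and apply \cref{prop:Lebesgue} to $\mcX$: for the value $\lambda' = \lambda$ (or a slightly inflated value to be chosen below) we obtain covers $\mcU_X$ of each $X \in \mcX$ with dimension at most $n$, Lebesgue number at least $\lambda'$, and uniformly bounded mesh $R$. The natural thing to do is to push these covers down: for $U \in \mcU_X$ let $FU = \{Fx \mid x \in U\}$ be its image in $F\backslash X$. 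Since $F$ acts isometrically, $\diam(FU) \leq \diam(U)$, so the pushed-down cover $F\mcU_X := \{FU \mid U \in \mcU_X\}$ of $F\backslash X$ still has mesh at most $R$, giving property (3) for $F\backslash\mcX$.

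The main obstacle is controlling the dimension and Lebesgue number of $F\mcU_X$, since the quotient map can identify sets that were disjoint upstairs, and a point $Fx$ downstairs corresponds to an orbit of up to $|F|$ points upstairs. For the dimension: a point $Fx \in F\backslash X$ lies in $FU$ exactly when some point of the orbit $Fx = \{hx : h\in F\}$ lies in $U$; hence the number of sets of $F\mcU_X$ containing $Fx$ is at most $\sum_{h\in F}|\{U \in \mcU_X : hx \in U\}| \le |F|(n+1)$. This shows $F\mcU_X$ has dimension at most $|F|(n+1) - 1$, which is exactly the bound we want. For the Lebesgue number, one checks that a ball in $F\backslash X$ lifts (on each sheet) into a ball in $X$ of the same radius: concretely, $B_\mu(Fx)$ is the image of $\bigcup_{h\in F} B_\mu(hx)$, so if $L(\mcU_X) \ge \mu$ then each $B_\mu(hx)$ sits inside some $U \in \mcU_X$, and it follows that $B_\mu(Fx) \subseteq FU$. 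Thus $L(F\mcU_X) \ge L(\mcU_X) \ge \lambda'$. Taking $\lambda' = \lambda$ suffices, so no inflation is actually needed.

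Assembling these three facts, the covers $F\mcU_X$ witness, via \cref{prop:Lebesgue}, that $\asdim(F\backslash\mcX) \le |F|(n+1) - 1 = |F|(\asdim(\mcX)+1) - 1$, as claimed. I would organize the write-up as: (i) a short lemma or inline observation that the quotient map $X \to F\backslash X$ sends balls onto images of balls and is $1$-Lipschitz (both immediate from the formula for the quotient metric and isometry of the action); (ii) the dimension count via orbit sizes; (iii) the Lebesgue number estimate; (iv) the conclusion via \cref{prop:Lebesgue}. The only place requiring a little care is making sure all the estimates are uniform over $X \in \mcX$ — but this is automatic since the bounds $n$, $R$, and the Lebesgue number $\lambda$ coming out of \cref{prop:Lebesgue} are already uniform in $X$, and the combinatorial factor $|F|$ is fixed, so uniformity is preserved throughout.
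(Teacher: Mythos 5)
Your proposal is correct and follows essentially the same route as the paper: take uniform covers of $\mcX$ from \cref{prop:Lebesgue}, push them down along the quotient maps, note that mesh and Lebesgue number are preserved by the choice of quotient metric, and count multiplicities via orbits of size at most $|F|$ to get dimension at most $|F|(n+1)-1$. The only cosmetic point is that for the Lebesgue number it suffices to observe that $B_\mu(Fx)$ is the image of $B_\mu(hx)$ for any single lift $hx$, which is exactly why your ``it follows'' step goes through.
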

\begin{proof}
	This was proved for metric spaces in the proof of \cite[Lemma 2.2]{Bartels}. It immediately generalizes to metric families. We quickly recall the proof here. 
	
	Let $n=\asdim \mcX$. Given $\lambda>0$ there exists an $n$-dimensional cover $\mcU_X$ of $X$, for each $X\in \mcX$, and an $R>0$ such that $L(\mcU_X)\geq\lambda$ and $\mesh(\mcU_X)\leq R$ for every $X\in \mcX$. Let $q_X\colon X\to F\backslash X$ denote the quotient map. Then $q_X(\mcU_X):=\{q_X(U)\mid U\in \mcU_X\}$ is a cover of $F\backslash X$. By definition of the metric on $F\backslash X$, we have that $L(q_X(\mcU_X))\geq\lambda$ and $\mesh(q_X(\mcU_X))\leq R$. For every $y\in F\backslash X$, $q_X^{-1}(y)$ contains at most $|F|$ points. Since the dimension of $\mcU_X$ is at most $n$, it follows that the dimension of $q_X(\mcU_X)$ is at most $|F|(n+1)-1$. Therefore, by \cref{prop:Lebesgue}, the asymptotic dimension of $F\backslash \mcX$ is at most $|F|(n+1)-1$.
\end{proof}

\begin{defi}
	Let $\mcY$ be a metric family. The {\it asymptotic Assouad-Nagata dimension} of $\mcY$, denoted $\asdim_{{\rm AN}}(\mcY)$, is the smallest non-negative integer $n$ with the following property. There exist non-negative constants $b$ and $M$ such that for every $Y \in \mcY$ and every $R > 0$ there exists a cover $\mcU$ of $Y$
such that $\mcU  = \mcU_0  \,  \cup \cdots \cup \,  \mcU_n$, where each collection $\mcU_i$ is $R$-disjoint and $\mesh(\mcU)  \leq M R + b$. The function $D_{\mcY}(r) = Mr +b$  is called an {\it $n$-dimensional control function} for $\mcY$. If there does not exist an $n$ satisfying the above conditions, then we define $\asdim_{{\rm AN}}(\mcY) = \infty$.
\end{defi}

Note that $\asdim(\mcY) \leq  \asdim_{{\rm AN}}(\mcY)$ for any metric family $\mcY$.

Let $\mcX$ and $\mcZ$ be metric families and let $F \colon \mcX \to \mcZ$ be a map of families. Following \cite{BDLM}, define the \emph{asymptotic dimension} of $F$, denoted $\asdim(F)$, to be
\[ 
\asdim(F):=\{\asdim(\mcA)\mid \text{ $\mcA$ is a subfamily of $\mcX$ and } \asdim(F(\mcA))=0\}. 
\]
Analogously, the  {\it asymptotic Assouad-Nagata dimension} of $F$, denoted $\asdim_{{\rm AN}}(F)$, is defined as
\[ 
\sup\{ \asdim_{{\rm AN}}(\mcA)  ~|~  \text{$\mcA$ is a subfamily of $\mcX$} \text{ and } \asdim_{{\rm AN}}(F(\mcA))=0\}. 
\]

While \cite[Theorems 1.2, 8.2, and 2.5]{BDLM}, stated below for metric families, are proved in the case of singleton families, that is, $\mcX = \{X\}$, $\mcZ = \{Z\}$ and $F = \{ f \colon X \to Z\}$,
their extensions to general metric families is straightforward.

\begin{theorem}[{\cite[Theorem 1.2]{BDLM}}]
	If $F\colon \mcX\to \mcZ$ is a coarse map, then
\[\asdim(\mcX)\leq \asdim(F) +\asdim(\mcZ).\]
\end{theorem}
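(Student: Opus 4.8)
The plan is to reduce the metric-family statement to the already-known case of singleton families (i.e.\ ordinary maps of metric spaces) by working one space at a time while keeping careful track of the uniformity of the constants. Let $m = \asdim(F)$ and $k = \asdim(\mcZ)$; we may assume both are finite, since otherwise there is nothing to prove. Set $n = m + k$; the goal is to produce, for every $\lambda > 0$, covers $\mcU_X$ of each $X \in \mcX$ of dimension $\le n$ with $L(\mcU_X) \ge \lambda$ and $\mesh(\mcU_X) \le R$ for a single $R = R(\lambda)$ independent of $X$, and then invoke \cref{prop:Lebesgue}.

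The key steps, in order, are as follows. First, fix $\lambda > 0$ and let $\rho$ be the control function for $F$; choose $\lambda' = \rho(\lambda)$ (or any value $\ge \rho(\lambda)$), and apply \cref{prop:Lebesgue} to $\mcZ$ at scale $\lambda'$ to get covers $\mcV_Z$ of each $Z \in F(\mcX) \subseteq \mcZ$ with $\dim \mcV_Z \le k$, $L(\mcV_Z) \ge \lambda'$, and $\mesh(\mcV_Z) \le R'$ for a uniform $R'$. Second, by the standard colouring argument for covers of dimension $\le k$, decompose each $\mcV_Z$ into $k+1$ subfamilies $\mcV_Z^{(0)}, \dots, \mcV_Z^{(k)}$, each of which is a $\lambda$-disjoint family after one passes to the $f$-preimages --- more precisely, for each colour $j$ and each $f \colon X \to Z$ in $F$, the family $\{ f^{-1}(V) : V \in \mcV_Z^{(j)} \}$ is $\lambda$-disjoint in $X$, because two such preimages being within distance $\lambda$ would force their images to be within $\rho(\lambda) = \lambda' \le L(\mcV_Z)$, hence to lie in a common element of $\mcV_Z$, contradicting the disjointness of $\mcV_Z^{(j)}$. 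Third, for a fixed colour $j$ and fixed $V \in \mcV_Z^{(j)}$, consider the subfamily $\mcA_{j,V} = \{ f^{-1}(V) : f \in F, \ f(\text{dom}) = Z, \ V \in \mcV_Z^{(j)}\}$ of $\mcX$; its image under $F$ lands in the bounded family $\{V\}$, which has asymptotic dimension $0$, so by the definition of $\asdim(F)$ we get $\asdim(\mcA_{j,V}) \le m$. The subtle point here is that we want a \emph{single} $m$-dimensional control bound valid simultaneously for all $j$ and all $V$; this is where one uses that $\asdim(F)$ is defined as a supremum over \emph{all} such subfamilies, so for scale $\lambda$ there is one $R'' = R''(\lambda)$ with the property that each $\mcA_{j,V}$ admits covers of dimension $\le m$, Lebesgue number $\ge \lambda$, and mesh $\le R''$ (take $\mcA$ to be the union of all the $\mcA_{j,V}$; its image still has $\asdim = 0$ since it is bounded-by-$R'$ away from being a disjoint union, and in any case one can apply \cref{prop:Lebesgue} to the single family $\bigcup_{j,V}\mcA_{j,V}$).

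Fourth, assemble the pieces: for each $X \in \mcX$ and each $f \colon X \to Z$ in $F$, refine each preimage $f^{-1}(V)$ by the $m$-dimensional cover of mesh $\le R''$ provided above; the resulting collection of all such refined pieces, ranging over the $k+1$ colours, is a cover of $X$ of dimension at most $(m+1)(k+1) - 1$. To get the sharper bound $n = m+k$ one argues colour by colour exactly as in the classical proof (Bell--Dranishnikov): within each colour $j$ the pieces $f^{-1}(V)$ are $\lambda$-disjoint, so the refined cover restricted to colour $j$ has dimension $\le m$, and one then takes a union over the $k+1$ colours, adding at most $k+1$ to the dimension count in the standard way, yielding total dimension $\le m + k = n$. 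The Lebesgue number of the assembled cover is $\ge \lambda$ by construction, and the mesh is $\le R''$, uniformly in $X$. Applying \cref{prop:Lebesgue} once more gives $\asdim(\mcX) \le n = \asdim(F) + \asdim(\mcZ)$.

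The main obstacle I expect is bookkeeping the uniformity of constants across the whole family at once --- in the singleton case one only juggles finitely many scales, whereas here one must ensure that the single mesh bound $R''$ coming from $\asdim(F)$ genuinely works for every $f \in F$ and every colour-piece simultaneously. This is handled cleanly by noting that $\asdim(F)$ is already defined as a supremum over all subfamilies with $0$-dimensional image, so the "bad" family $\bigcup_{j,V} \mcA_{j,V}$ is covered by a single application of the definition; the rest is the classical asymptotic-dimension-of-a-map argument run verbatim, which the excerpt explicitly says extends to families without change.
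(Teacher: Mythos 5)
Your steps (1)--(3) are essentially sound, and they amount to exactly the ``straightforward extension to families'' that the paper has in mind: the paper does not reprove this theorem at all, but cites the singleton case of \cite[Theorem 1.2]{BDLM} and observes that the passage to general metric families is routine, precisely because the family versions of $\asdim$, of $\asdim(F)$, and of \cref{prop:Lebesgue} already carry constants that are uniform over the whole family. (One small repair in your step (2): the $\lambda$-disjointness of the pulled-back pieces should be deduced from the $\rho(\lambda)$-disjointness of the colour classes $\mcV^{(j)}_Z$ supplied by the decomposability formulation of $\asdim(\mcZ)\le k$, not from the Lebesgue number; two points of $Z$ at distance at most $\rho(\lambda)$ lying in a common cover element does not by itself contradict anything about two sets of the same colour.)

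The genuine gap is in step (4), i.e.\ exactly where you attempt to reprove the singleton-case content rather than quote it. Within one colour $j$ the refined cover indeed has multiplicity at most $m+1$, but a point $x\in X$ lies in up to $k+1$ of the sets $f^{-1}(V)$ (one for each colour containing $f(x)$), so the union over the $k+1$ colours has multiplicity at most $(m+1)(k+1)$. The assertion that taking this union ``adds at most $k+1$ to the dimension count'' is false in general; your own computation earlier in the same step, giving dimension $(m+1)(k+1)-1$, is all that this pull-back-and-refine construction yields. The additive bound $m+k$ is the real content of the Hurewicz-type theorem of Bell--Dranishnikov and of \cite[Theorem 1.2]{BDLM}, and its known proofs are substantially more involved (an induction on $\asdim(\mcZ)$ combined with union theorems, or the multi-scale control-function argument of \cite{BDLM}); it cannot be extracted from a single cover of $\mcZ$ by a colour count, just as the product theorem $\asdim(X\times Y)\le\asdim(X)+\asdim(Y)$ cannot. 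As written, your argument proves only $\asdim(\mcX)\le(\asdim(F)+1)(\asdim(\mcZ)+1)-1$. To obtain the stated inequality you should either cite the singleton-space theorem and supply only the uniformity bookkeeping of your steps (1)--(3), which is the paper's route, or else reproduce the genuinely harder inductive argument in the family setting.
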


The map $F$ is {\it asymptotically Lipschitz} if there exist non-negative constants $M$ and $L$ such that for all $f \colon X \to Z$ in $F$, and all $x, x' \in X$, $d_Z(f(x),f(x')) \leq M d_X(x,x') + L$.

\begin{theorem}[{\cite[Theorem 8.2]{BDLM}}]\label{thm:BDLMtheorem}
	If $F \colon \mcX \to \mcZ$  is an asymptotically Lipschitz map of metric families, then
\[ \asdim_{{\rm AN}}({\mcX}) \leq \asdim_{{\rm AN}}({F}) + \asdim_{{\rm AN}}({\mcZ}). \]
\end{theorem}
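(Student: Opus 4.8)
The plan is to adapt the proof of the metric-space case \cite[Theorem 8.2]{BDLM}, which is the Assouad--Nagata analogue of the asymptotic-dimension statement \cite[Theorem 1.2]{BDLM} recorded above, and to check that every constant appearing there can be chosen uniformly over the members of the families involved. We may assume $m := \asdim_{{\rm AN}}(F)$ and $k := \asdim_{{\rm AN}}(\mcZ)$ are finite, and (as in \cite{BDLM}) we read $\asdim_{{\rm AN}}(F)\leq m$ as furnishing a single control function $D_F(r) = M_F r + b_F$ valid for every subfamily $\mcA$ of $\mcX$ with $\asdim_{{\rm AN}}(F(\mcA)) = 0$. Fixing $R>0$, the goal is to cover each $X\in\mcX$ by $\mcU = \mcU_0\cup\cdots\cup\mcU_{m+k}$ with each $\mcU_i$ being $R$-disjoint and $\mesh(\mcU)\leq M'R + b'$, where $M'$ and $b'$ depend only on $m$, $k$, the asymptotic Lipschitz constants of $F$, and the control functions witnessing $\asdim_{{\rm AN}}(\mcZ)\leq k$ and $\asdim_{{\rm AN}}(F)\leq m$.

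First I would use $\asdim_{{\rm AN}}(\mcZ)\leq k$ to cover each $Z\in\mcZ$, at a scale $S$ chosen to be linear in $R$ via the asymptotic Lipschitz constants $M,L$ of $F$ (say $S = MR + L$, after arranging $M\geq 1$), by $\mcV = \mcV_0\cup\cdots\cup\mcV_k$ with each $\mcV_j$ being $S$-disjoint and $\mesh(\mcV)\leq M_{\mcZ}S + b_{\mcZ}$. For each $f\colon X\to Z$ in $F$ and each $j$, the family $\{f^{-1}(V)\mid V\in\mcV_j\}$ is an $R$-disjoint family in $X$, since $d_Z(f(x),f(x'))>S$ forces $d_X(x,x')>(S-L)/M\geq R$. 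Collecting these over all $f\in F$ gives a subfamily $\mcA_j$ of $\mcX$ whose image $F(\mcA_j)$ is bounded, hence has $\asdim_{{\rm AN}} = 0$; therefore $\mcA_j$ decomposes, at any chosen scale, into $m+1$ disjoint subfamilies with mesh controlled by $D_F$. Since $\mcV$ covers $Z$, decomposing all of the $\mcA_j$ produces a cover of $X$.

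The main obstacle is assembling these pieces into a cover with the correct number $m+k+1$ of colors: a naive product of the $(k+1)$-coloring of $\mcV$ with the $(m+1)$-coloring of each fiber decomposition only yields $(m+1)(k+1)$. Here I would import verbatim the layering argument of \cite[Theorem 8.2]{BDLM}, which interleaves the fiber decompositions with the colors $0,\dots,k$ of $\mcV$ and decomposes the fiber subfamilies $\mcA_j$ at the \emph{large} scale $S$ (not at scale $R$), creating enough separation to recycle colors while keeping every mesh bound linear in $R$. This combinatorial step carries the real content and is exactly what \cite{BDLM} supply; nothing further is needed for the passage to metric families, because the construction is carried out one $X\in\mcX$ at a time and refers only to the uniform data above, so the output constants $M',b'$ are independent of $X$. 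Performing it for every $X$ and every $R$ establishes $\asdim_{{\rm AN}}(\mcX)\leq m+k$.
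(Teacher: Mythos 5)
Your proposal matches what the paper does: the paper gives no independent proof of this statement, but simply cites \cite[Theorem 8.2]{BDLM} for the singleton-family case and remarks that the extension to general metric families is straightforward because all constants can be chosen uniformly. Since you likewise defer the essential combinatorial content (the $m+k+1$ coloring) to \cite{BDLM} and only verify the uniformity of the control data over the family, your argument is essentially the same as the paper's.
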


\begin{theorem}[{\cite[Theorem 2.5]{BDLM}}]\label{cor:asdim_product}
	Let $\mcX$ and $\mcY$ be metric families and let $\mcX \times \mcY = \{X \times Y~|~X\in \mcX, Y\in \mcY\}$, where each $X \times Y$ is equipped with the $\ell^1$-metric, $d^1\big((x_1,y_1),(x_2,y_2)  \big)=d_X(x_1,x_2)+d_Y(y_1,y_2)$. Then
\[\asdim(\mcX \times \mcY) \leq \asdim(\mcX)+\asdim(\mcY)\]
and
\[\asdim_{{\rm AN}}(\mcX\times \mcY)\leq \asdim_{{\rm AN}}(\mcX)+\asdim_{{\rm AN}}(\mcY).\]
\end{theorem}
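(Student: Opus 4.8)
The plan is to deduce both estimates from the two preceding theorems, applied to the first-coordinate projection $\pi\colon\mcX\times\mcY\to\mcX$ --- by which I mean the map of families consisting of all projections $X\times Y\to X$ with $X\in\mcX$ and $Y\in\mcY$. Since each $X\times Y$ carries the $\ell^1$-metric $d^1$, every such projection is $1$-Lipschitz, so $\pi$ is coarse (with control function $\id$) and asymptotically Lipschitz. Hence \cite[Theorem~1.2]{BDLM} gives $\asdim(\mcX\times\mcY)\leq\asdim(\pi)+\asdim(\mcX)$, and \cref{thm:BDLMtheorem} gives $\asdim_{{\rm AN}}(\mcX\times\mcY)\leq\asdim_{{\rm AN}}(\pi)+\asdim_{{\rm AN}}(\mcX)$. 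It therefore suffices to prove $\asdim(\pi)\leq\asdim(\mcY)$ and $\asdim_{{\rm AN}}(\pi)\leq\asdim_{{\rm AN}}(\mcY)$.

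For the first inequality, let $\mcA$ be a subfamily of $\mcX\times\mcY$ with $\asdim(\pi(\mcA))=0$; I will show $\asdim(\mcA)\leq n:=\asdim(\mcY)$. Fix $r>0$. Since $\pi(\mcA)$ has asymptotic dimension $0$, there is a constant $S=S(r)$, independent of the individual space, so that for every $A\in\mcA$ the set $\pi(A)$ is an $r$-disjoint union of subsets of diameter at most $S$; intersecting $A$ with the preimages of these subsets writes $A$ as an $r$-disjoint union (with respect to $d^1$, since the first coordinates are $r$-separated) of \emph{clusters} $A'$ whose first coordinates have diameter at most $S$. Next, apply $\asdim(\mcY)\leq n$ with parameter $r$ to the space $Y\in\mcY$ carrying the second coordinates of the points of $A$: this yields a decomposition $Y=E_0\cup\cdots\cup E_n$ with each $E_p$ an $r$-disjoint union of sets of diameter at most a constant $R_{\mcY}(r)$ depending only on $r$. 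For a cluster $A'$ and an index $p$, the points of $A'$ whose second coordinate lies in $E_p$ form, by the inequality $d^1\bigl((x,y),(x',y')\bigr)\geq d_Y(y,y')$, an $r$-disjoint union in $d^1$ of sets of $d^1$-diameter at most $S+R_{\mcY}(r)$; call this set $C_{A',p}$. Put $D_p=\bigcup_{A'}C_{A',p}$, the union over all clusters $A'$. Pieces lying over different clusters are $r$-separated in the first coordinate, hence in $d^1$, so $D_p$ is again an $r$-disjoint union of sets of diameter at most $S+R_{\mcY}(r)$, and $A=D_0\cup\cdots\cup D_n$. As this bound does not depend on $A$ and $r$ was arbitrary, $\asdim(\mcA)\leq n$, whence $\asdim(\pi)\leq n$.

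The Assouad--Nagata inequality follows from the very same construction, now tracking affine control functions: if $\pi(\mcA)$ has $0$-dimensional control function $M_0r+b_0$ and $\mcY$ has $n$-dimensional control function $M'r+b'$, then for every $r>0$ the construction above produces an $(n+1)$-fold $r$-disjoint decomposition of each $A\in\mcA$ into sets of diameter at most $(M_0+M')r+(b_0+b')$, so $\asdim_{{\rm AN}}(\mcA)\leq n$ and hence $\asdim_{{\rm AN}}(\pi)\leq n$. Combining this with \cref{thm:BDLMtheorem} completes the argument.

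The only point that needs attention --- and the sense in which the passage from \cite{BDLM} is ``straightforward'' rather than automatic --- is uniformity: the constants $S(r)$ and $R_{\mcY}(r)$, the slopes and intercepts in the Assouad--Nagata case, and the number $n+1$ of subcollections must be independent of the particular space in the family. This is exactly what the metric-family formulations of \cref{prop:Lebesgue}, \cref{def:asdim}, and the definition of $\asdim_{{\rm AN}}$ guarantee, so no idea beyond the singleton case treated in \cite[Theorems 1.2, 8.2, and 2.5]{BDLM} is required. Alternatively, one can bypass the projection entirely and adapt, verbatim but with uniform parameters, the cover construction in the proof of \cite[Theorem~2.5]{BDLM}, using \cref{prop:Lebesgue} in place of the corresponding single-space characterization of finite asymptotic dimension.
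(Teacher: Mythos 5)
Your argument is correct, but it is not the paper's: the paper offers no proof of \cref{cor:asdim_product} at all, simply citing \cite[Theorem 2.5]{BDLM} for the singleton case and remarking that the extension to general metric families is straightforward. You instead derive the product inequalities from the two Hurewicz-type theorems applied to the first-coordinate projection $\pi\colon\mcX\times\mcY\to\mcX$, which is $1$-Lipschitz for the $\ell^1$-metric, reducing everything to the claims $\asdim(\pi)\leq\asdim(\mcY)$ and $\asdim_{{\rm AN}}(\pi)\leq\asdim_{{\rm AN}}(\mcY)$. Your verification of these claims is sound: for a subfamily $\mcA$ with $\asdim(\pi(\mcA))=0$, the $r$-disjoint, uniformly bounded decomposition of $\pi(\mcA)$ produces clusters that are $r$-separated in $d^1$ because $d^1\geq d_X\circ(\pi\times\pi)$, and intersecting each cluster with the pieces of a uniform $(n+1)$-fold decomposition of the relevant $Y\in\mcY$ yields $n+1$ $r$-disjoint collections of diameter at most $S(r)+R_{\mcY}(r)$, uniformly over $\mcA$; the same bookkeeping with affine control functions handles the Assouad--Nagata case, and the degenerate cases where the right-hand sides are infinite are trivial. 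What your route buys is a self-contained family-level proof that only invokes the (family versions of the) Hurewicz theorems already recorded in the paper, whereas the paper's intended justification is the one you mention only in your closing remark: rerun the cover construction of \cite[Theorem 2.5]{BDLM} verbatim with the uniform parameters supplied by \cref{prop:Lebesgue} and the family definitions. Either way the uniformity of all constants over the families is the only point of substance, and you address it explicitly.
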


Let $(X,d_X)$ be a metric space. Recall that the \emph{Gromov product} of $x,y \in X$ with respect to a base point $p \in X$ is
\[ (x|y)_p  \colonequals \tfrac{1}{2} \left( d_X(x,p )+ d_X(y,p) - d_X(x,y) \right). \]
The  space $X$ is \emph{Gromov 0-hyperbolic} if for all $x,y,z\in X$,
\[ (x|z)_p \geq \min\{(x|y)_p, \, (y|z)_p\}. \]

\begin{lemma}\label{lem:productof0hyp}
	Let $m$ be a positive integer and $\mcT$ be a metric family such that every $T \in \mcT$ is an  $\ell^1$-metric product of at most $m$ Gromov $0$-hyperbolic spaces. Then $\asdim_{{\rm AN}}(\mcT) \leq m$.
\end{lemma}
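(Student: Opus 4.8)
The plan is to bootstrap from the case of a single Gromov $0$-hyperbolic space using the product estimate for asymptotic Assouad--Nagata dimension, \cref{cor:asdim_product}.

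The first and essentially only substantive step is to show that the metric family $\mathcal H$ consisting of \emph{all} Gromov $0$-hyperbolic metric spaces satisfies $\asdim_{\rm AN}(\mathcal H)\le 1$; that is, a single control function, independent of the space, witnesses asymptotic Assouad--Nagata dimension at most $1$ for every such space. A metric space satisfying the three-point inequality above embeds isometrically into an $\mathbb R$-tree, and $\asdim_{\rm AN}$ is monotone under passing to subfamilies (in particular, to subspaces, by restricting covers), so it suffices to produce a control function $D_0(r)=Cr$, with $C$ an absolute constant, valid for every $\mathbb R$-tree. For an $\mathbb R$-tree $T$ with a basepoint $p$ and a scale $R>0$, I would consider the annuli $A_n=\{x\in T : (n-1)R<d(p,x)\le(n+1)R\}$ together with their connected components. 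Since connected subsets of an $\mathbb R$-tree are geodesically convex, every component of $A_n$ has diameter at most $4R$; a short computation shows every ball of radius $R/2$ lies in a single component; and $A_n$ and $A_{n'}$ are disjoint whenever $|n-n'|\ge 2$, so colouring the components of $A_n$ by the parity of $n$ produces a uniformly bounded cover of multiplicity at most $2$ and Lebesgue number at least $R/2$. Converting such a cover into two genuinely $R$-disjoint families with mesh at most $Cr$ is then a standard refinement argument.

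Granting $\asdim_{\rm AN}(\mathcal H)\le 1$, fix the positive integer $m$ and, for $1\le j\le m$, let $\mathcal H^{\times j}$ be the metric family of all $\ell^1$-metric products of exactly $j$ members of $\mathcal H$. Applying \cref{cor:asdim_product} to metric families $j-1$ times gives $\asdim_{\rm AN}(\mathcal H^{\times j})\le j\le m$. By hypothesis, every $T\in\mcT$ is an $\ell^1$-product of some number $j\le m$ of Gromov $0$-hyperbolic spaces and hence belongs to $\mathcal H^{\times j}$, so the family $\mcT$ is contained in the finite union $\mathcal H^{\times 1}\cup\cdots\cup\mathcal H^{\times m}$. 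Since the asymptotic Assouad--Nagata dimension of a finite union of metric families is the maximum of those of the constituents---pad the lower-dimensional decompositions with empty families and take the pointwise maximum of the finitely many control functions---we conclude $\asdim_{\rm AN}(\mcT)\le m$.

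The main obstacle is confined to the first step, the uniform bound $\asdim_{\rm AN}(\mathbb R\text{-tree})\le 1$. The delicate point there is that the parity colouring only makes same-colour pieces disjoint, not $R$-disjoint---two components of a single annulus can be arbitrarily close near a branch point---so one must first shrink the cover using its Lebesgue number before colouring, or else invoke the known fact that metric trees have Nagata (equivalently, asymptotic Assouad--Nagata) dimension at most $1$. Everything afterwards is routine bookkeeping with \cref{cor:asdim_product} and the elementary finite-union estimate.
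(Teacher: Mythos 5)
Your proof is correct, but it reaches the lemma by a genuinely different route than the paper. The paper quotes the proof of \cite[Proposition 9.8]{roe}, which already gives the uniform one--dimensional linear control function $D(r)=3r$ for \emph{every} Gromov $0$-hyperbolic space, and then, instead of invoking \cref{cor:asdim_product}, applies \cite[Theorem 2.4]{BDLM} to each factor to get a $(1,m+1)$-control function $f(m+1)r$ (covers in $m+1$ colours, each $r$-disjoint, with every point in at least $m$ colours); taking products of same-coloured sets and using pigeonhole yields an explicit $m$-dimensional control function $mf(m+1)r$, visibly independent of $T\in\mcT$. You instead re-derive the base case by isometrically embedding a $0$-hyperbolic space into an $\mathbb{R}$-tree and decomposing annuli into components, and then you iterate the family product theorem \cref{cor:asdim_product} plus a finite-union (or pad-with-points) observation. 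The trade-off: your product step is a one-liner given that \cref{cor:asdim_product} is already stated for families (though that family extension is asserted, not proved, in the paper, and your route produces no explicit control function), while your base case costs more than the paper's citation and also uses the classical fact that $0$-hyperbolic spaces embed isometrically into $\mathbb{R}$-trees (the four-point condition), an input the paper avoids by arguing with Gromov products directly. The one delicate point is exactly the one you flag: components of a single annulus need not be $R$-disjoint near branch points, so the parity colouring alone is not enough; your repair does work, since with overlapping annuli of width $2R$ the components give a cover of mesh at most $4R$, multiplicity at most $2$ and Lebesgue number at least $R/2$ (connected subsets of an $\mathbb{R}$-tree are convex and balls are connected), and shrinking each component $U$ to $\{x\mid B_{R/2}(x)\subseteq U\}$ keeps a cover while forcing any two disjoint pieces to be at least $R$-separated, because a midpoint of two nearby points would lie in both; this keeps a linear control function with absolute constants, which is all the family statement needs. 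Two cosmetic repairs: the ``family of all Gromov $0$-hyperbolic spaces'' is a proper class rather than a set, so work instead with the set of factors actually occurring in members of $\mcT$; and for strict $r$-disjointness in the sense of \cref{def:decompose} simply run the construction at a slightly larger scale.
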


\begin{proof}
	In the proof of \cite[Proposition 9.8]{roe} it is shown that if $X$ is a Gromov $0$-hyperbolic space then  $D_X(r)=3r$ is a $1$-dimensional control function for $X$.
Note that $D_X$ is independent of $X$.
Let $Z \in \mcT$.  Then  $Z = X_1 \times \cdots \times X_m$, where each  $X_j$ is a  Gromov $0$-hyperbolic space.
Observe that $f(n) \colonequals 3^{n-1} + 3^{n-2} -1$ is the solution to the linear recurrence $f(n) = 3 f(n-1) + 2$,  $n \geq 3$ and $f(2) = 3$.
By \cite[Theorem 2.4]{BDLM},
$D^{(m)}_{X_j}(r) = f(m+1)r$ is a {\it $(1,m+1)$-control function} for $X_j$.
By definition, this means that for any $r > 0$  there is a cover $\mcU^j = \mcU_0^j  \cup \cdots \cup \mcU_{m}^j$ of $X_j$ such that
	\begin{enumerate}
		\item each $\mcU_i^j $ is $r$-disjoint,
		
		\item $\mesh (\mcU^j )\leq D^{(m)}_{X_j}(r)$,
		
		\item each $x \in X_j$ belongs to at least $m$ elements of $\mcU^j$.
	\end{enumerate}
For $0\leq i \leq m$, let $\mcU_i = \{ U_1 \times \cdots \times U_m ~|~ U_j \in \mcU^j_i, \,  1\leq j \leq m\}$.
Then each $\mcU_i$ is $r$-disjoint and, by the third property above,
$\mcU = \mcU_0  \cup \cdots \cup \mcU_{m}$ is a cover of $Z = X_1 \times \cdots \times X_m$.
Hence, $D_Z (r) = mf(m+1)r$ is an $m$-dimensional control function for $Z$.
Note that $D_Z (r)$ is independent of $Z \in \mcT$. 
Thus, $D_{\mcT}(r) = mf(m+1)r$ is an $m$-dimensional control function for $\mcT$ and so $\asdim_{{\rm AN}}(\mcT) \leq m$.
\end{proof}

\begin{remark}
	Throughout we have used the $\ell^1$-metric for products. However, when working with finite products there is flexibility in the choice of the metric on the product. 
Given metric spaces $\big\{ (X_i, d_{X_i})\big\}^m_{i=1}$ and an extended real number $1 \leq p \leq \infty$,
their {\it $\ell^p$-metric product} is $X = X_1 \times \cdots \times X_m$
with the metric
\begin{displaymath}
d^p_X\big((x_1, \ldots, x_m), (y_1, \ldots, y_m) \big)  \colonequals  \left\{
        \begin{array}{cl}
                  \left( \displaystyle\sum^m_{i=1} d_{X_i}(x_i, y_i)^p \right)^{1/p} & \mbox{if } 1 \leq p  < \infty, \\
                  & \\
                  \max\big\{ d_{X_i}(x_i, y_i) \big\}^m_{i=1} & \mbox{if } p= \infty.
        \end{array}\right.
\end{displaymath}

Note that for $1 \leq p \leq q \leq \infty$ we have the well-known inequalities 
$d^q_X \leq d^p_X \leq m^{1/p - 1/q} d^q_X$  (where, by convention, $1/\infty =0$)
and so these metrics are bi-Lipschitz equivalent with Lipschitz constants depending only on $p$, $q$ and $m$.
\end{remark}


\section{An extension theorem}\label{sec:extension}

The goal of this section is to prove the Extension Theorem~\ref{thm:extension}. \cref{cor:extension}, which follows from the Extension Theorem, is needed to establish Finite Union Permanence (\cref{def:finunion}) of $\fR_\alpha$, for every ordinal $\alpha$ (\cref{thm:finunion2alpha}).  An important step in proving the Extension Theorem is \cref{thm:coneasdimAN}, which states that if $\mcY$ is a metric family with $\asdim_{{\rm AN}}(\mcY) < \infty$, then $\asdim_{{\rm AN}}(C(\mcY)) \leq  \asdim_{{\rm AN}}(\mcY) +1$, where $C(\mcY)$ is the \emph{cone of} $\mcY$ (\cref{def:cone_family}).

Recall that the class $\fR_1$ coincides with the collection of all metric families with finite asymptotic dimension.

\begin{thm}[Extension Theorem]\label{thm:extension}
	Let $\mcX$ be a metric family with a decomposition $X=X_0\cup X_1$ for each $X\in\mcX$, $\mcX_0=\{X_0\}_{X\in\mcX}$, $\mcY\in \fR_1$ and $F\colon \mcX_0\to \mcY$ be a coarse map. Then there is a $\mcY'\in\fR_1$, a coarse embedding $\Theta\colon \mcY\to \mcY'$ and a coarse map $F'\colon \mcX\to \mcY'$ such that the diagram
\[\xymatrix{
\mcX_0\ar[r]^-F\ar[rd]_-{F'|_{\mcX_0}}&\mcY\ar[d]^-\Theta\\
&\mcY'
} \]
coarsely commutes. That is, $\Theta \circ F$ is close to $F'|_{\mcX_0}$.
\end{thm}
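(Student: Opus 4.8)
\smallskip
\noindent\textbf{Proposed proof.}
The plan is to take $\mcY'$ to be a \emph{cone} over a convenient model of $\mcY$ and to define $F'$ on the ``extra'' part $X_1$ of each $X\in\mcX$ by pushing a point up into the cone to a height governed by its distance from $X_0$.

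First I would replace $\mcY$ by a family with finite asymptotic Assouad-Nagata dimension. Since a composition of coarse embeddings is a coarse embedding, it suffices to prove the theorem after replacing $\mcY$ by any family that receives a coarse embedding from $\mcY$: if the theorem holds for such a family $\mcT$, producing $\Theta'\colon\mcT\to\mcY'$ and $F'\colon\mcX\to\mcY'$, then $\Theta:=\Theta'\circ j$ (with $j\colon\mcY\to\mcT$ the coarse embedding) together with the same $F'$ proves it for $\mcY$. By the proof of \cite[Theorem~4.1]{fdc} there is a coarse embedding of $\mcY$ into a family $\mcT$ each of whose members is an $\ell^1$-product of $\asdim(\mcY)+1$ Gromov $0$-hyperbolic spaces, and by \cref{lem:productof0hyp} this family satisfies $\asdim_{{\rm AN}}(\mcT)<\infty$; in particular $\mcT\in\fR_1$. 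So I will assume from now on that $\asdim_{{\rm AN}}(\mcY)<\infty$. (The members $X\in\mcX$ with $X_0=\emptyset$ cause no trouble: the corresponding component of $F$ is the empty map and the corresponding component of $F'$ can be taken constant; so I will also assume each $X_0\neq\emptyset$.)

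Next I would set $\mcY':=C(\mcY)$, the cone of $\mcY$ (\cref{def:cone_family}). By \cref{thm:coneasdimAN} we get $\asdim_{{\rm AN}}(\mcY')\le\asdim_{{\rm AN}}(\mcY)+1<\infty$, hence $\asdim(\mcY')<\infty$ and $\mcY'\in\fR_1$. Let $\Theta\colon\mcY\to\mcY'$ be the natural inclusion of $\mcY$ as the base slice of the cone; by the definition of the cone metric this is a coarse embedding. To build $F'$: for each $f\colon X_0\to Y$ in $F$ choose, for every $x\in X\setminus X_0$, a point $\pi(x)\in X_0$ with $d_X(x,\pi(x))\le d_X(x,X_0)+1$, and define an extension $\widetilde f\colon X\to C(Y)$ of $\Theta\circ f$ by $\widetilde f(x)=\Theta(f(x))$ for $x\in X_0$ and $\widetilde f(x)=\bigl(f(\pi(x)),\,\varphi(d_X(x,X_0))\bigr)$ for $x\in X\setminus X_0$, where $\varphi\colon[0,\infty)\to[0,\infty)$ is a non-decreasing ``height function'' to be chosen below. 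Let $F'$ consist of all these maps $\widetilde f$. Then $F'|_{\mcX_0}=\Theta\circ F$, so the triangle in the statement commutes exactly, and $\mcY'\in\fR_1$; the one remaining point is that $F'$ must be a coarse map.

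Verifying that $F'$ is coarse is the heart of the argument. Given $x,x'$ in some $X\in\mcX$ with $d_X(x,x')\le R$, I need to bound $d_{C(Y)}\bigl(\widetilde f(x),\widetilde f(x')\bigr)$ by a function of $R$ alone. If $x,x'\in X_0$ this follows from the coarseness of $F$ and of $\Theta$. If exactly one of them lies outside $X_0$, say $x'$, then $d_X(x',X_0)\le R$, so $d_X(x,\pi(x'))\le 2R+1$ and the two heights involved are $0$ and $\varphi(d_X(x',X_0))\le\varphi(R)$; one then uses the coarseness of $F$ and an estimate for the cone metric near the base. The decisive case is $x,x'\notin X_0$ with $t:=d_X(x,X_0)$ and $t':=d_X(x',X_0)$ both large. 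Here $|t-t'|\le R$, and the triangle inequality gives $d_X(\pi(x),\pi(x'))\le t+t'+R+2\le 2\min(t,t')+2R+2$, so $d_Y\bigl(f(\pi(x)),f(\pi(x'))\bigr)$ is bounded by the control function of $F$ evaluated at $2\min(t,t')+2R+2$. In the cone, $d_{C(Y)}\bigl((f(\pi x),\varphi(t)),(f(\pi x'),\varphi(t'))\bigr)$ splits into a vertical part, bounded by $|\varphi(t)-\varphi(t')|$, and a horizontal part, which at height $\approx\varphi(\min(t,t'))$ is a heavily contracted multiple of $d_Y(f(\pi x),f(\pi x'))$. I would therefore choose $\varphi$ so that simultaneously: (a) $\varphi$ has uniformly bounded increments, i.e.\ $\sup_{s\ge 0}\bigl(\varphi(s+R)-\varphi(s)\bigr)<\infty$ for every $R$, which controls the vertical part; and (b) the contraction factor of the cone metric at height $\varphi(\min(t,t'))$ overpowers the value of the control function of $F$ at $2\min(t,t')+2R+2$, which controls the horizontal part. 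Reconciling (a) with (b) is the main obstacle; it is precisely here that the contraction profile built into \cref{def:cone_family} — and, if needed, a preliminary replacement of the metrics on the members of $\mcX$ by coarsely equivalent ones for which the control function of $F$ is tamer (which affects none of the data appearing in the conclusion) — have to be used carefully. Once such a $\varphi$ is in hand, combining the three cases yields a single control function for $F'$, uniform over $\mcX$, and the proof is complete.
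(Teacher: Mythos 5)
Your overall architecture is the same as the paper's: embed $\mcY$ into a family $\mcT$ of $\ell^1$-products of Gromov $0$-hyperbolic spaces via the proof of \cite[Theorem 4.1]{fdc}, pass to the cone, use \cref{lem:productof0hyp} and \cref{thm:coneasdimAN} to see the cone is in $\fR_1$, and extend $F$ by sending $x\in X\setminus X_0$ to a nearby point of $X_0$ pushed up to a height measured by $d_X(x,X_0)$. But the heart of the theorem is exactly the step you leave open, namely that the extension is coarse, and the mechanism you propose for it does not work. You treat the cone as given and try to choose a height reparametrization $\varphi$ satisfying both (a) bounded increments and (b) enough contraction at height $\varphi(\min(t,t'))$ to beat the control function $\rho'$ of $F$ evaluated at $2\min(t,t')+2R+2$. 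For a \emph{fixed} contraction profile $\rho$ these two demands are incompatible in general: (a) forces $\varphi(u)\leq \varphi(0)+S(1)u$, i.e.\ at most linear growth, while (b) forces $\rho(\varphi(u))\gtrsim \rho'(2u+2)$, and since $\rho'$ may grow faster than $\rho$ composed with any linear function (e.g.\ $\rho(s)=e^{s}$ and $\rho'(r)=e^{e^{r}}$), no admissible $\varphi$ exists. Your fallback of re-metrizing the members of $\mcX$ is also not carried out and is problematic as stated: $f$ is only defined on $X_0$, so taming its control function by changing the metric there does not control the quantity you actually need, which involves $d_X(\pi(x),\pi(x'))$ measured in $X$ and then fed into $\rho'$; moreover the new metrics must remain uniformly dominated by a function of the old ones for $F'$ to be coarse with respect to the original family.

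The paper resolves this differently, and this is the idea your proposal is missing: the cone construction in \cref{def:cone_family} depends on a choice of $\rho$ (see the remark following it), and in \cref{prop:extensiontocone} one chooses $\rho$ \emph{in terms of the control function of $F$} — explicitly $\rho(t)=\max\{\rho'(3t+2),1\}$ — while using the plain height $d_X(x,X_0)$ (no $\varphi$ at all). With that choice the estimate via \cref{lem:conemetric} goes through with control function $r\mapsto r+\rho(r)$, and since all the dimension results about cones (\cref{lem:asdimANzero}, \cref{thm:coneasdimAN}) hold for an arbitrary $\rho$, the rest of your outline then works verbatim. As written, your proposal identifies the right obstacle but neither solves it nor points at the lever (the freedom in $\rho$) that makes it solvable, so the proof is incomplete at its decisive step.
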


\begin{cor}\label{cor:extension}
	Let $\mcX$ be a metric family with a decomposition $X=\bigcup_{i=0}^nX_i$ for each $X\in\mcX$. For each $0\leq i \leq n$, let $\mcX_i=\{X_i\}_{X\in\mcX}$ and let $F_i\colon \mcX_i\to \mcY_i$ be coarse maps with $\mcY_i\in\fR_1$. Then there is a coarse map $F\colon\mcX\to \mcY$ such that $\mcY\in\fR_1$ and for every bounded subfamily $\mcB$ of $\mcY$ there are bounded subfamilies $\mcB_i$ of $\mcY_i$ such that $F^{-1}(\mcB)$ coarsely embeds into $\{\bigcup_{i=0}^nC_i\mid C_i\in F_i^{-1}(\mcB_i)\}$.
\end{cor}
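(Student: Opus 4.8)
The plan is to apply the Extension Theorem~\ref{thm:extension} iteratively, peeling off one piece of the decomposition at a time and enlarging both the target family and the domain map accordingly. First I would set up the induction. For the base case $n=0$ there is nothing to prove: take $F = F_0$ and $\mcY = \mcY_0$. For the inductive step, suppose the statement holds for decompositions into $n$ pieces. Given $X = \bigcup_{i=0}^{n} X_i$, regard $X$ as the union of the two sets $A := X_0 \cup \cdots \cup X_{n-1}$ and $B := X_n$, so $\mcX$ has a decomposition into $\mcA = \{A\}_{X\in\mcX}$ and $\mcX_n = \{X_n\}_{X\in\mcX}$. Each $A$ itself carries the decomposition $A = \bigcup_{i=0}^{n-1} X_i$, so by the induction hypothesis there is a coarse map $G \colon \mcA \to \mcW$ with $\mcW \in \fR_1$ such that for every bounded subfamily $\mcC$ of $\mcW$ there are bounded subfamilies $\mcC_i \subseteq \mcY_i$ ($0 \le i \le n-1$) with $G^{-1}(\mcC)$ coarsely embedding into $\{\bigcup_{i=0}^{n-1} C_i \mid C_i \in F_i^{-1}(\mcC_i)\}$.

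Next I would apply the Extension Theorem to the decomposition $\mcX = \{A \cup X_n\}$ with ``$X_0$'' there being $A$, the map $F$ there being $G \colon \mcA \to \mcW$. This produces $\mcW' \in \fR_1$, a coarse embedding $\Theta \colon \mcW \to \mcW'$, and a coarse map $G' \colon \mcX \to \mcW'$ with $\Theta \circ G$ close to $G'|_{\mcA}$. Separately, $F_n \colon \mcX_n \to \mcY_n$ is already given with $\mcY_n \in \fR_1$. Now I would combine $G'$ and $F_n$ into a single coarse map out of $\mcX$. The natural target is $\mcY := \mcW' \times \mcY_n$, using the product which lies in $\fR_1$ because $\fR_1$ is exactly the collection of metric families of finite asymptotic dimension and $\asdim$ is subadditive under $\ell^1$-products (\cref{cor:asdim_product}). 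Define $F \colon \mcX \to \mcY$ on each $X$ by sending $x$ to a point whose $\mcW'$-coordinate is $G'(x)$ and whose $\mcY_n$-coordinate is $F_n(x)$ if $x \in X_n$, and is an arbitrary fixed basepoint of the relevant space in $\mcY_n$ (together with the obvious choice of which $f_n$ to use) otherwise; one checks $F$ is coarse since $G'$ and $F_n$ are.

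Finally I would verify the inverse-image condition. Let $\mcB$ be a bounded subfamily of $\mcY = \mcW' \times \mcY_n$. Its projections give bounded subfamilies $\mcB' \subseteq \mcW'$ and $\mcB_n \subseteq \mcY_n$, and $F^{-1}(\mcB) \subseteq (G')^{-1}(\mcB') \cap F_n^{-1}(\mcB_n)$ in the appropriate sense. For $A$-points, since $\Theta \circ G$ is close to $G'|_{\mcA}$ and $\Theta$ is a coarse embedding, $(G')^{-1}(\mcB')$ restricted to the $A$-parts coarsely embeds into $G^{-1}(\mcC)$ for a suitable bounded subfamily $\mcC$ of $\mcW$ (enlarge $\mcB'$ by a bounded amount to absorb the closeness constant and pull back through $\Theta$); by the induction hypothesis this in turn coarsely embeds into $\{\bigcup_{i=0}^{n-1} C_i \mid C_i \in F_i^{-1}(\mcC_i)\}$ with $\mcC_i \subseteq \mcY_i$ bounded. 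Taking $\mcB_i := \mcC_i$ for $i < n$ and keeping $\mcB_n$ from above, a set $f^{-1}(B) \in F^{-1}(\mcB)$ decomposes as the union of its intersection with $A$, which sits (coarsely) inside some $\bigcup_{i<n} C_i$ with $C_i \in F_i^{-1}(\mcB_i)$, and its intersection with $X_n$, which sits inside some $C_n \in F_n^{-1}(\mcB_n)$; assembling these exhibits the required coarse embedding of $F^{-1}(\mcB)$ into $\{\bigcup_{i=0}^n C_i \mid C_i \in F_i^{-1}(\mcB_i)\}$.

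The main obstacle I anticipate is bookkeeping the coarse embeddings rather than any deep idea: one must be careful that a coarse embedding of the sub\emph{family} $G^{-1}(\mcC)$ into $\{\bigcup_{i<n} C_i\}$ can be combined with the inclusion of the $X_n$-pieces to produce a coarse embedding of the \emph{union} $f^{-1}(B) = (f^{-1}(B)\cap A) \cup (f^{-1}(B)\cap X_n)$ into the corresponding union $\bigcup_{i=0}^n C_i$, with control functions independent of the particular $X \in \mcX$. This requires checking that the pieces of the decomposition of each $f^{-1}(B)$ are mapped compatibly — i.e., that the two coarse embeddings agree up to bounded distance on the overlap or can be glued — which is exactly the kind of uniformity the Extension Theorem was designed to provide, so the closeness statement $\Theta \circ G \sim G'|_{\mcA}$ is what makes the gluing work.
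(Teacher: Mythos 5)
Your overall architecture (induct so that only the two-piece case matters, extend the map on one piece via Theorem~\ref{thm:extension}, map into a product which lies in $\fR_1$ by \cref{cor:asdim_product}, then chase bounded subfamilies through projections) is the right one, but there is a genuine gap in how you handle the second piece. You define the $\mcY_n$-coordinate of $F$ to be $F_n(x)$ for $x\in X_n$ and a fixed basepoint otherwise. That map need not be coarse: a point of $X_n$ at bounded distance from a point of $X\setminus X_n$ can have $F_n$-image arbitrarily far from the chosen basepoint. For instance, take $X=\bbR^2$ with $X_n$ the closed upper half-plane, $A$ the closed lower half-plane, and $F_n(s,t)=s\in\bbR$; the points $(k,0)\in X_n$ and $(k,-1)\in A\setminus X_n$ are at distance $1$, yet their images under your $F$ differ by $|k|$ in the $\mcY_n$-coordinate. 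So the ``coarse since $G'$ and $F_n$ are'' step fails, and with it the construction of $F$. This is precisely the difficulty that the Extension Theorem exists to resolve: a coarse map defined on a piece of a decomposition cannot in general be extended by sending the complement to a basepoint; one must pass to the cone, where the metric is damped as the distance to the piece grows.

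The repair is to treat the two pieces symmetrically, which is what the paper does: apply \cref{thm:extension} not only to $G\colon\mcA\to\mcW$ (relative to the decomposition $X=A\cup X_n$) but also to $F_n\colon\mcX_n\to\mcY_n$ (relative to $X=X_n\cup A$), obtaining coarse extensions $G'\colon\mcX\to\mcW'$ and $F_n'\colon\mcX\to\mcY_n'$ with $\Theta\circ G$ close to $G'|_{\mcA}$ and $\Theta_n\circ F_n$ close to $F_n'|_{\mcX_n}$, and then set $F=G'\times F_n'\colon\mcX\to\{W'\times Y_n'\}$. With that change your verification goes through essentially as you outlined and as in the paper: a bounded $\mcB$ sits inside a product of bounded subfamilies $\mcB'\subseteq\mcW'$, $\mcB_n'\subseteq\mcY_n'$; points of $A$ in $F^{-1}(\mcB)$ are controlled by the closeness of $\Theta\circ G$ to $G'|_{\mcA}$ and the coarse embedding $\Theta$ (so they land, coarsely, in $G^{-1}(\mcC)$ for $\mcC=\Theta^{-1}$ of an enlarged $\mcB'$, hence by induction in $\bigcup_{i<n}C_i$), while points of $X_n$ are controlled by the closeness of $\Theta_n\circ F_n$ to $F_n'|_{\mcX_n}$ (so they land in $F_n^{-1}(\mcB_n)$ with $\mcB_n=\Theta_n^{-1}$ of an enlarged $\mcB_n'$), and the two bounds glue into a coarse embedding of $F^{-1}(\mcB)$ into $\{\bigcup_{i=0}^n C_i\mid C_i\in F_i^{-1}(\mcB_i)\}$ with uniform control. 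In short: the idea and the bookkeeping are fine, but the asymmetric ``basepoint'' extension must be replaced by a second application of the Extension Theorem.
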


\begin{proof}
	By induction it suffices to prove the case $n=1$. Let $F'_i\colon \mcX\to \mcY'_i$ be as in \cref{thm:extension}. Define $F\colon \mcX\to \mcY:=\{Y'_0\times Y'_1\mid Y_i'\in\mcY_i'\}$ to be the product of $F_0'$ and $F_1'$. Since $\Theta_0\circ F_0$ is close to $F'_0|_{\mcX_0}$, for every bounded subfamily $\mcB'_0$ of $\mcY_0'$ the inverse image $(F_0')^{-1}(\mcB_0')$ coarsely embeds into $\{C_0\cup X_1\mid C_0\in F_0^{-1}(\Theta_0^{-1}(\mcB'_0)), X_1\in \mcX_1\}$. Because $\Theta_0$ is a coarse embedding, the subfamily $\Theta_0^{-1}(\mcB'_0)$ of $\mcY_0$ is bounded.
The analogous statements hold for bounded subfamilies $\mcB_1'$ of $\mcY_1'$.
Every bounded subfamily $\mcB$ of $\mcY$ is a subfamily of $\{B'_0\times B'_1\mid B_i'\in\mcB_i'\}$ for bounded subfamilies $\mcB_i'$ of $\mcY_i'$. Hence, $F^{-1}(\mcB)$ is a subfamily of $(F_0')^{-1}(\mcB_0')\cap (F_1')^{-1}(\mcB_1')$ which coarsely embeds into $\{(C_0\cup (X_1\setminus X_0))\cap ((X_0\setminus X_1)\cup C_1)\mid X_i\in\mcX_i, C_i\in F_i^{-1}(\mcB_i)\}=\{C_0\cup C_1\mid C_i\in F_i^{-1}(\mcB_i)\}$, where $\mcB_i$ denotes the bounded subfamilies $\Theta_i^{-1}(\mcB_i')$ of $\mcY_i$. 
\end{proof}

An important tool used to prove the Extension Theorem~\ref{thm:extension} is the notion of the \emph{cone} of a metric family $\mcY$ (\cref{def:cone_family}).

Let $(Y,d_Y)$ be a metric space and let $\rho\colon[0,\infty)\to[0,\infty)$ be a non-decreasing function. Let $C(Y)$ denote the space $Y\times[0,\infty)$ with the metric
$d_{C(Y)}$ generated by the symmetric, non-negative function
\[d'((y,t),(y',t'))=|t-t'|+\tfrac{d_Y(y,y')}{\max\{\rho(\max\{t,t'\}),1\}}\]
via the {\it chain condition}. That is,
\[
d_{C(Y)}((y,t),(y',t'))= \inf \, \left\{\sum_{i=0}^{n-1}d'((y_i,t_i),(y_{i+1},t_{i+1}))\right\},
\]
where the infimum is taken over all finite sequences (``chains'') of the form
\[ \{ (y_i, t_i) \in Y\times[0,\infty) ~|~ i=0, \ldots, n\} \] 
with $(y,t) = (y_0, t_0)$  and $(y',t') = (y_n, t_n)$.

\begin{rem}
		Note that the cone $C(Y)$ depends on the choice of $\rho$, which we suppress from the notation. For different choices of $\rho$ the resulting cones will not necessarily be coarsely equivalent. For example, for constant $\rho$ the cone is quasi-isometric to the product $Y\times[0,\infty)$, while this is in general not the case if $\rho$ is proper. Most of the following results hold for any choice of $\rho$, but for \cref{prop:extensiontocone} we will have to use a specific choice of $\rho$.
\end{rem}

\begin{defi}\label{def:cone_family}
	Given a metric family $\mcY$, define the \emph{cone of} $\mcY$, denoted $C(\mcY)$, to be the metric family $\{C(Y)\}_{Y\in\mcY}$.
\end{defi}

\begin{lemma}\label{lem:conemetric}
	For $(y,t),(y',t')\in C(Y)$ we have
\[
d_{C(Y)}((y,t),(y',t')) ~=~ \inf_{s\geq\max\{t,t'\}} \left\{2s-t-t'+\tfrac{d_Y(y,y')}{\max\{\rho(s),1\}}\right\}.
\]
\end{lemma}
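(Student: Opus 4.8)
The plan is to prove the two inequalities separately. Write $\mu(s) = 2s - t - t' + \tfrac{d_Y(y,y')}{\max\{\rho(s),1\}}$ for $s \geq \max\{t,t'\}$ and let $I = \inf_{s \geq \max\{t,t'\}} \mu(s)$. I want to show $d_{C(Y)}((y,t),(y',t')) = I$.

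\medskip

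\noindent\textbf{The inequality $d_{C(Y)} \leq I$.} For any $s \geq \max\{t,t'\}$, I would exhibit a concrete chain realizing the value $\mu(s)$ up to an error that vanishes, or exactly. The natural chain is the three-link path $(y,t) \to (y,s) \to (y',s) \to (y',t')$. Summing $d'$ along this chain gives $|s-t| + \tfrac{d_Y(y,y)}{\max\{\rho(s),1\}} + |s - s| + \tfrac{d_Y(y,y')}{\max\{\rho(s),1\}} + |s - t'| + \tfrac{d_Y(y',y')}{\max\{\rho(s),1\}} = (s-t) + (s-t') + \tfrac{d_Y(y,y')}{\max\{\rho(s),1\}} = \mu(s)$ (using $s \geq \max\{t,t'\}$ so the absolute values open up correctly, and $d_Y(y,y) = d_Y(y',y') = 0$). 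Since $d_{C(Y)}$ is an infimum over all chains, $d_{C(Y)}((y,t),(y',t')) \leq \mu(s)$ for every admissible $s$, hence $d_{C(Y)}((y,t),(y',t')) \leq I$.

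\medskip

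\noindent\textbf{The inequality $d_{C(Y)} \geq I$, which is the main obstacle.} Here I must show that \emph{every} chain $(y,t) = (y_0,t_0), (y_1,t_1), \ldots, (y_n,t_n) = (y',t')$ has $d'$-length at least $I$. Set $s_* = \max_i t_i$, which satisfies $s_* \geq \max\{t,t'\}$. The $t$-coordinate contribution is $\sum_{i=0}^{n-1} |t_{i+1} - t_i| \geq (s_* - t) + (s_* - t') = 2s_* - t - t'$, since the chain starts at height $t$, must climb to height $s_*$ at some point, and return to height $t'$ — a path on $[0,\infty)$ realizing a maximum $s_*$ has total variation at least $(s_* - t_0) + (s_* - t_n)$. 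For the $Y$-coordinate contribution, at each link $\max\{\rho(\max\{t_i,t_{i+1}\}),1\} \leq \max\{\rho(s_*),1\}$ because $\max\{t_i,t_{i+1}\} \leq s_*$ and $\rho$ is non-decreasing; hence $\sum_{i=0}^{n-1} \tfrac{d_Y(y_i,y_{i+1})}{\max\{\rho(\max\{t_i,t_{i+1}\}),1\}} \geq \tfrac{1}{\max\{\rho(s_*),1\}} \sum_{i=0}^{n-1} d_Y(y_i,y_{i+1}) \geq \tfrac{d_Y(y,y')}{\max\{\rho(s_*),1\}}$ by the triangle inequality in $Y$. Adding the two bounds, the total $d'$-length of the chain is at least $2s_* - t - t' + \tfrac{d_Y(y,y')}{\max\{\rho(s_*),1\}} = \mu(s_*) \geq I$. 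Taking the infimum over chains gives $d_{C(Y)}((y,t),(y',t')) \geq I$.

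\medskip

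\noindent\textbf{Remarks on subtleties.} The one genuinely delicate point is the lower bound on the height-variation sum: that a sequence on $[0,\infty)$ starting at $t$, ending at $t'$, and attaining its maximum $s_*$ has $\sum |t_{i+1} - t_i| \geq (s_* - t) + (s_* - t')$. I would justify this by splitting the chain at an index $k$ where $t_k = s_*$ and applying the triangle inequality on each half: $\sum_{i=0}^{k-1}|t_{i+1}-t_i| \geq |t_k - t_0| = s_* - t$ and $\sum_{i=k}^{n-1}|t_{i+1}-t_i| \geq |t_n - t_k| = s_* - t'$. Combining the two displays finishes the proof, and together with the first inequality yields the claimed formula.
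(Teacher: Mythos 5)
Your proof is correct and follows essentially the same route as the paper: the three-link chain $(y,t),(y,s),(y',s),(y',t')$ for the upper bound, and for the lower bound the estimate of any chain's length via $s_*=\max_i t_i$, bounding the height variation by $2s_*-t-t'$ and the $Y$-contribution by $d_Y(y,y')/\max\{\rho(s_*),1\}$. The only cosmetic difference is that you bound every chain directly while the paper phrases the same estimate for an $\varepsilon$-almost-optimal chain.
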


\begin{proof}
	Since $(y,t), \, (y,s), \, (y', s), \, (y',t')$ is a chain from $(y,t)$ to $(y',t')$
we have that
	\begin{align*}
		d_{C(Y)}((y,t),(y',t')) & \leq d'((y,t),(y,s))+d'((y,s),(y',s))+d'((y',s), (y',t'))\\
		&=2s-t-t'+\tfrac{d_Y(y,y')}{\max\{\rho(s),1\}},
	\end{align*}
provided $s\geq \max\{t,t'\}$. It follows that
\[
d_{C(Y)}((y,t),(y',t')) ~\leq~ \inf_{s\geq\max\{t,t'\}} \left\{2s-t-t'+\tfrac{d_Y(y,y')}{\max\{\rho(s),1\}}\right\}.
\]
Let $\epsilon >0$.  Then there exists a chain $\{ (y_i,t_i) ~|~ i=0,\ldots,n\}$ with $(y,t) = (y_0, t_0)$  and $(y',t') = (y_n, t_n)$ such that
\[\sum_{i=0}^{n-1}d'((y_i,t_i),(y_{i+1},t_{i+1}))<d_{C(Y)}((y,t),(y',t'))+\epsilon.\]
Let $s'=\max_i t_i$. Then
	\begin{align*}
		\sum_{i=0}^{n-1}d'((y_i,t_i),(y_{i+1},t_{i+1}))&=\sum_{i=0}^{n-1}|t_i-t_{i+1}|+\sum_{i=0}^{n-1}\tfrac{d_Y(y_i,y_{i+1})}{\max\{\rho(\max\{t_i,t_{i+1}\}),1\}}\\
		&\geq |t_0-s'|+|t_n-s'|+\sum_{i=0}^{n-1}\tfrac{d_Y(y_i,y_{i+1})}{\max\{\rho(s'),1\}}\\
		&\geq 2s'-t-t'+\tfrac{d_Y(y,y')}{\max\{\rho(s'),1\}}.
	\end{align*}
Hence,
\[d_{C(Y)}((y,t),(y',t'))~=~\inf_{s\geq \max\{t,t'\}} \left\{2s-t-t'+\tfrac{d_Y(y,y')}{\max\{\rho(s),1\}}\right\}.\qedhere\]
\end{proof}

For $t \geq 0$, define the function $\phi_t\colon [0,\infty)\to [0,\infty)$ by
\[
\phi_t(r) \colonequals \inf_{s\geq t} \left\{2(s-t)+\tfrac{r}{\max\{\rho(s),1\}}\right\} = \inf_{u \geq 0} \left\{2u+\tfrac{r}{\max\{\rho(u+t),1\}}\right\}.
\]
Note that $d_{C(Y)}((y,t), (y',t')) = \phi_{\max\{t, t'\}}(d_Y(y,y')) + |t -t'|$.

\begin{cor}
	The subspace $Y_t \colonequals Y\times\{t\}\subseteq C(Y)$ is isometric to $Y$ equipped with the metric $\phi_t \circ d_Y$.
\end{cor}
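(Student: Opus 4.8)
The plan is to unwind the definition of $d_{C(Y)}$ restricted to $Y_t = Y \times \{t\}$ and observe that the distance formula from \cref{lem:conemetric} collapses to exactly $\phi_t \circ d_Y$ when both endpoints sit at the same height $t$. Concretely, for $(y,t), (y',t) \in Y_t$, \cref{lem:conemetric} gives
\[
d_{C(Y)}((y,t),(y',t)) ~=~ \inf_{s \geq t} \left\{ 2s - t - t + \tfrac{d_Y(y,y')}{\max\{\rho(s),1\}} \right\} ~=~ \inf_{s \geq t} \left\{ 2(s-t) + \tfrac{d_Y(y,y')}{\max\{\rho(s),1\}} \right\},
\]
which is precisely $\phi_t(d_Y(y,y'))$ by the definition of $\phi_t$. (Equivalently, one can cite the displayed remark immediately after the definition of $\phi_t$, namely $d_{C(Y)}((y,t),(y',t')) = \phi_{\max\{t,t'\}}(d_Y(y,y')) + |t - t'|$, and specialize to $t' = t$ so the term $|t - t'|$ vanishes and $\max\{t,t'\} = t$.)

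Thus the map $Y \to Y_t$ sending $y \mapsto (y,t)$ is a bijection (it is visibly onto $Y_t$ and injective) that carries the metric $\phi_t \circ d_Y$ on $Y$ to the metric $d_{C(Y)}|_{Y_t}$. To finish I would note that $\phi_t \circ d_Y$ is genuinely a metric on $Y$: nonnegativity and symmetry are inherited from $d_Y$ since $\phi_t$ takes nonnegative values and depends only on $d_Y(y,y')$; $\phi_t(0) = 0$ is immediate by taking $s = t$ in the infimum; positivity for $y \neq y'$ follows because $\max\{\rho(s),1\} \geq 1$ forces $2(s-t) + d_Y(y,y')/\max\{\rho(s),1\} \geq \min\{d_Y(y,y'),\, \text{something positive}\}$ more carefully, $\phi_t(r) > 0$ whenever $r > 0$ since the infimand is bounded below by a positive quantity when $s$ is near $t$ and grows without bound otherwise; and the triangle inequality is inherited because $d_{C(Y)}$ already satisfies it and our bijection transports it. Actually the cleanest route is the reverse: since $d_{C(Y)}$ is a metric on $C(Y)$, its restriction to the subspace $Y_t$ is a metric, and we have just identified that restricted metric with $\phi_t \circ d_Y$ under the bijection $y \leftrightarrow (y,t)$; hence $\phi_t \circ d_Y$ is a metric and the bijection is an isometry, which is exactly the assertion.

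I do not expect any genuine obstacle here: the statement is a direct corollary of \cref{lem:conemetric} together with the defining formula for $\phi_t$, and the only thing to be careful about is making sure the infimum defining $d_{C(Y)}|_{Y_t}$ really does reduce to the single-parameter infimum over $s \geq t$ rather than requiring longer chains this is exactly the content of \cref{lem:conemetric}, so it can simply be invoked. The write-up is essentially a two-line substitution.
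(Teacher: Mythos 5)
Your proposal is correct and matches the paper's reasoning: the corollary is an immediate specialization of \cref{lem:conemetric} (equivalently, of the displayed identity $d_{C(Y)}((y,t),(y',t')) = \phi_{\max\{t,t'\}}(d_Y(y,y')) + |t-t'|$) to $t'=t$, with the bijection $y\mapsto(y,t)$ transporting the restricted metric. Your remark that metricity of $\phi_t\circ d_Y$ is inherited from the restriction of $d_{C(Y)}$ is exactly the right way to close the argument.
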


\begin{example}
Let $\rho(s) = e^s$.  For this $\rho$, an elementary calculus exercise reveals that
\[
\phi_t(r)=
\begin{cases}
  e^{-t} r  & \text{if  $0 \leq r < 2 e^t$,}\\
 2\left( \ln(r/2) -t\right) + 2 & \text{if  $r \geq 2 e^t$.}
\end{cases}
\]
\end{example}

\begin{prop}\label{prop:phiproperties}
	For all $t \geq0$,
the function $\phi_t$ has the following properties.
	\begin{enumerate}
		\item\label{prop:phi1} 
		If $t' \geq t$ then $\phi_{t'} \leq \phi_{t}$.

		\item\label{prop:phi2} 
		$\phi_t$ is strictly increasing.

		\item\label{prop:phi3} 
		$\lim_{r \to \infty} \phi_t(r) = \infty$,

		\item\label{prop:phi4} 
		For all  $r, r' \geq 0$, we have $ |  \phi_t(r') - \phi_t(r) | \leq |r' -r| / \max\{\rho(t),1\}$. That is, $\phi_t$ is $(1 /  \max\{\rho(t),1\})$-Lipschitz.

		\item\label{prop:phi5} 
		If $\rho$ is proper, then $\lim_{t\to\infty}\phi_t(r)=0$ for all $r\geq 0$.

		\item\label{prop:phi6} 
		$\phi_t$ is a homeomorphism.

		\item\label{prop:phi7} 
		$\phi_t$ is concave. That is,  $\phi_t(\lambda r + (1 - \lambda)r') \geq \lambda\phi_t(r) + (1 - \lambda)\phi_t(r')$ for all $r, r' \geq 0$
and $0 \leq \lambda \leq 1$.

		\item\label{prop:phi8} 
$\phi_t$ is subadditive. That is, $\phi_t(r +r') \leq  \phi_t(r) + \phi_t(r')$ for all $r, r' \geq 0$.
Furthermore, for any $M \geq 1$,  $\phi_t(M r) \leq  M\phi_t(r)$.

		\item\label{prop:phi9} 
		For all $\delta \geq 0$, $\phi_t \leq \phi_{t + \delta} \, + \, 2\delta$.
	\end{enumerate}
\end{prop}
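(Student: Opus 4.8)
The plan is to exploit the two expressions for $\phi_t$: as an infimum over $s\ge t$ of the functions $r\mapsto 2(s-t)+r/\max\{\rho(s),1\}$, which are \emph{affine} in $r$, and as an infimum over $u\ge 0$ of $r\mapsto 2u+r/\max\{\rho(u+t),1\}$. Almost every item is a soft consequence of one of these forms together with the two concrete facts obtained by taking $s=t$ (equivalently $u=0$): namely $\phi_t(0)=0$ and $\phi_t(r)\le r/\max\{\rho(t),1\}\le r$. I would organize the proof accordingly rather than treating the nine parts separately.

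First, the parts visible from the affine-infimum form. Each slope $1/\max\{\rho(s),1\}$ lies in $(0,1]$ and is largest when $s=t$, so $\phi_t$, being an infimum of functions that are all $(1/\max\{\rho(t),1\})$-Lipschitz in $r$, is itself $(1/\max\{\rho(t),1\})$-Lipschitz; this is part~\ref{prop:phi4}, and in particular $\phi_t$ is continuous. An infimum of affine (hence concave) functions is concave, which is part~\ref{prop:phi7}. Subadditivity \ref{prop:phi8} is then formal: a concave $f$ on $[0,\infty)$ with $f(0)\ge 0$ satisfies $f(r)\ge\tfrac{r}{r+r'}f(r+r')$ and $f(r')\ge\tfrac{r'}{r+r'}f(r+r')$ (compare with the chord from $(0,f(0))$ to $(r+r',f(r+r'))$), and adding gives $f(r)+f(r')\ge f(r+r')$; the estimate $\phi_t(Mr)\le M\phi_t(r)$ for $M\ge1$ is the same chord inequality applied to $r=\tfrac1M(Mr)+(1-\tfrac1M)\cdot 0$.

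Next, the parts visible from the $u$-parametrization. Since $\rho$ is non-decreasing, for each fixed $u$ the quantity $2u+r/\max\{\rho(u+t),1\}$ is non-increasing in $t$, and taking the infimum over $u$ gives part~\ref{prop:phi1}. Re-indexing $u\mapsto u+\delta$, which only shrinks the range of the infimum, yields $\phi_t(r)\le 2\delta+\inf_{u\ge 0}\{2u+r/\max\{\rho(u+t+\delta),1\}\}=\phi_{t+\delta}(r)+2\delta$, which is part~\ref{prop:phi9}. Evaluating at $u=0$ gives $0\le\phi_t(r)\le r/\max\{\rho(t),1\}$, whose right side tends to $0$ as $t\to\infty$ when $\rho$ is proper; that is part~\ref{prop:phi5}.

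The only parts requiring a genuine (if minor) argument are strict monotonicity \ref{prop:phi2} and properness \ref{prop:phi3}, and the point common to both is an a priori bound on near-minimizers coming from $2u\le\phi_t(r)\le r$: any $u$ with $2u+r/\max\{\rho(u+t),1\}$ within $1$ of $\phi_t(r)$ satisfies $u<(r+1)/2$, so the infimum is effectively over a bounded set of $u$, on which $\max\{\rho(u+t),1\}\le\max\{\rho((r+1)/2+t),1\}$. For \ref{prop:phi2}, given $0\le r<r'$ and such a near-minimizer $u$ for $r'$, the bound $\phi_t(r)\le 2u+r/\max\{\rho(u+t),1\}\le\phi_t(r')+1-(r'-r)/\max\{\rho((r'+1)/2+t),1\}$, with the slack sent to $0$, shows $\phi_t(r)<\phi_t(r')$. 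For \ref{prop:phi3}, if $\phi_t$ were bounded by $C$ on $[0,\infty)$ then near-minimizers would force $r\le(C+1)\max\{\rho((C+1)/2+t),1\}$ for every $r$, which is absurd, so $\phi_t(r)\to\infty$. Finally part~\ref{prop:phi6} is immediate from the others: $\phi_t$ is continuous by \ref{prop:phi4}, satisfies $\phi_t(0)=0$ and $\phi_t(r)\to\infty$ by \ref{prop:phi3}, hence is a continuous surjection $[0,\infty)\to[0,\infty)$, and it is injective by \ref{prop:phi2}, so it is a homeomorphism. I do not expect a serious obstacle anywhere; the only place to be careful is in \ref{prop:phi2} and \ref{prop:phi3}, where one must pass to a near-minimizer, control it via $2u\le\phi_t(r)\le r$, and keep the $\max\{\cdot,1\}$ truncation and the possible discontinuity of $\rho$ from causing trouble.
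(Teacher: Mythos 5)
Your proposal is correct and follows essentially the same route as the paper's proof: the easy items come from the same evaluations and re-indexings of the infimum (with the paper doing the explicit computations where you cite the standard facts that an infimum of affine, uniformly Lipschitz functions is concave and Lipschitz, and spelling out the chord argument for subadditivity that the paper quotes as well known), and for the two substantive items \eqref{prop:phi2} and \eqref{prop:phi3} you use exactly the paper's device of near-minimizers $u$ bounded a priori via $2u\leq\phi_t(r)+1$, so that $\max\{\rho(u+t),1\}$ is controlled by a fixed constant. No gaps.
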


\begin{proof}
	(\ref{prop:phi1}) Assume $t' \geq t$  and $r' \geq r \geq 0$.   Since $\rho$ is non-decreasing, we have for  $u \geq0$,
\[
2u +\tfrac{r}{\max\{\rho(u+t'),1\}}  \leq 2u +\tfrac{r'}{\max\{\rho(u+t),1\}}  
\]
and it follows that $\phi_{t'}(r) \leq \phi_t(r')$.  
Hence, $\phi_{t'} \leq \phi_{t}$  and $\phi_t$ is non-decreasing.

	(\ref{prop:phi2}) Assume $0 \leq x \leq y$ and $\phi_t(x) = \phi_t(y)$.
For each positive integer $n$ there exists $u_n \geq 0$ such that 
\[
\tfrac{1}{n} + \phi_t(y) >   2u_n +\tfrac{y}{\max\{\rho(u_n+t),1\}}. 
 \]
Let $C = \tfrac{1}{2}\left( 1 + \phi_t(y) \right)$. 
The above inequality implies that $u_n <  C$ for all $n$.
We also have that
\[
\phi_t(y) =  \phi_t(x)  \leq   2u_n +\tfrac{x}{\max\{\rho(u_n+t),1\}}. 
 \]
Thus, for all $n$,
\[
\tfrac{1}{n} >   \tfrac{y - x}{\max\{\rho(u_n+t),1\}} \geq  \tfrac{y - x}{\max\{\rho(C+t),1\}} \geq 0.
 \]
It follows that  $y - x = 0$ and so $x =y$, which shows that $\phi_t$ is strictly increasing.

	(\ref{prop:phi3}) Suppose $\lim_{r \to \infty} \phi_t(r) = \infty$ is false.
Since $\phi_t$ is an increasing function, $\phi_t$ is bounded;  that is,
there exists a $C > 0$ such that $\phi_t(r) \leq C$ for all $r \geq0$.
For each positive integer $n$ there exists $u_n \geq 0$ such that
\[
1 + \phi_t(n) >   2u_n +\tfrac{n}{\max\{\rho(u_n+t),1\}}. 
 \]
This implies that $u_n < C'= \tfrac{1}{2}(1+C)$ for all $n$. Hence,
\[
1 + C >   2u_n +\tfrac{n}{\max\{\rho(u_n+t),1\}} \geq  \tfrac{n}{\max\{\rho(C'+t),1\}} \geq 0
 \]
for all $n$, a contradiction.
 
	(\ref{prop:phi4}) Assume $r' \geq r \geq 0$. For $u \geq 0$,
	\begin{align*}
		2u  +\tfrac{r'}{\max\{\rho(u+t),1\}} & = 2u  +\tfrac{r}{\max\{\rho(u+t),1\}} +  \tfrac{r'-r}{\max\{\rho(u+t),1\}} \\
		& \leq 2u  +\tfrac{r}{\max\{\rho(u+t),1\}} +  \tfrac{r'-r}{\max\{\rho(t),1\}}.
	\end{align*}
Therefore, $\phi_t(r') \leq \phi_t(r) +  (r'-r) / \max\{\rho(t),1\}$.
Since  $0 \leq \phi_t(r') - \phi_t(r)$, it follows that
$ |  \phi_t(r') - \phi_t(r) | \leq |r' -r| / \max\{\rho(t),1\}$.

	(\ref{prop:phi5}) If $\rho$ is proper, then $\lim_{t\to\infty} 1 / \max\{\rho(t),1\} =0$ and so it follows from
        (\ref{prop:phi4}) that $\lim_{t\to\infty}\phi_t(r)=0$ for all $r\geq 0$.

	(\ref{prop:phi6}) The map $\phi_t$ is surjective since it is continuous, $\phi_t(0)=0$ and  $\lim_{r \to \infty} \phi_t(r) = \infty$.  
It is injective and open because  it is continuous and strictly increasing.  Hence  $\phi_t$ is a homeomorphism.

	(\ref{prop:phi7}) Let $r, r '\geq 0$ and $0 \leq \lambda \leq 1$.
Then
	\begin{align*}
		\phi_t(\lambda r+(1-\lambda)r')&=\inf_{u\geq 0} \left\{2u+\tfrac{\lambda r+(1-\lambda)r'}{\max\{\rho(u+t),1\}}\right\}\\
		&=\inf_{u\geq 0} \left\{\lambda \left(2u+\tfrac{r}{\max\{\rho(u+t),1\}}\right)+(1-\lambda)\left(2u+\tfrac{r'}{\max\{\rho(u+t),1\}}\right)\right\} \\
		&\geq \lambda\inf_{u\geq 0} \left\{2u+\tfrac{r}{\max\{\rho(u+t),1\}}\right\} + (1-\lambda)\inf_{u'\geq 0}\left\{2u'+\tfrac{r'}{\max\{\rho(u'+t),1\}}\right\}\\
		&=\lambda\phi_t(r)+(1-\lambda)\phi_t(r').
	\end{align*}
Thus, $\phi_t$ is concave.

	(\ref{prop:phi8}) 
It is well-known that any concave function $f$ on an interval containing $0$ and with $f(0)=0$ is subadditive 
and satisfies $f(Mx) \leq M f(x)$ for $M \geq 1$ and all $x$.  In particular, $\phi_t$ has the stated properties.

	(\ref{prop:phi9}) Let $\delta \geq 0$.  Then
	\begin{align*}
		\phi_{t+\delta}(r)&=\inf_{u\geq 0} \left\{2u+\tfrac{r}{\max\{\rho(u+t+\delta),1\}}\right\} \\
		&=\inf_{u\geq \delta} \left\{2u-2\delta+\tfrac{r}{\max\{\rho(u+t),1\}}\right\} \\
		&\geq\inf_{u\geq 0} \left\{2u-2\delta+\tfrac{r}{\max\{\rho(u+t),1\}}\right\} \\
		&=\phi_t(r)-2\delta. \qedhere
	\end{align*}
\end{proof}

\begin{cor}\label{cor:embedding}
	For all $t \geq 0$, the map $\theta_t\colon Y\to C(Y)$  given by $\theta_t(y)=(y,t)$ is $(1 /  \max\{\rho(t),1\})$-Lipschitz and a coarse embedding.
\end{cor}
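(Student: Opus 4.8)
The plan is to read everything off the identity recorded just before the corollary, namely $d_{C(Y)}\big(\theta_t(y),\theta_t(y')\big)=\phi_t\big(d_Y(y,y')\big)$, which is the special case $t=t'$ of $d_{C(Y)}((y,t),(y',t'))=\phi_{\max\{t,t'\}}(d_Y(y,y'))+|t-t'|$. So $\theta_t$ pulls the metric on $C(Y)$ back to the metric $\phi_t\circ d_Y$ on $Y$, and the whole statement reduces to properties of the single function $\phi_t$ established in \cref{prop:phiproperties}.

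For the Lipschitz claim I would apply \cref{prop:phiproperties}(\ref{prop:phi4}) together with $\phi_t(0)=0$: for all $y,y'\in Y$,
\[
d_{C(Y)}\big(\theta_t(y),\theta_t(y')\big)=\phi_t\big(d_Y(y,y')\big)=\big|\phi_t(d_Y(y,y'))-\phi_t(0)\big|\leq \frac{d_Y(y,y')}{\max\{\rho(t),1\}},
\]
so $\theta_t$ is $(1/\max\{\rho(t),1\})$-Lipschitz, and in particular coarse (uniformly expansive), with control function $r\mapsto r/\max\{\rho(t),1\}$.

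For effective properness the key observation is that $\phi_t$ can serve as its own lower control function: by \cref{prop:phiproperties}(\ref{prop:phi2}) it is strictly increasing, hence non-decreasing, and by \cref{prop:phiproperties}(\ref{prop:phi3}) it satisfies $\lim_{r\to\infty}\phi_t(r)=\infty$, so it is proper. Since $d_{C(Y)}(\theta_t(y),\theta_t(y'))=\phi_t(d_Y(y,y'))$, taking $\delta=\phi_t$ witnesses effective properness of $\theta_t$. Being both coarse and effectively proper, $\theta_t$ is a coarse embedding, which completes the argument. There is no real obstacle here: the corollary is a direct bookkeeping consequence of \cref{prop:phiproperties}, and the only point requiring a moment's care is noticing that the inequalities $\phi_t(r)\leq r/\max\{\rho(t),1\}$ and $\phi_t(r)\leq d_{C(Y)}(\theta_t(y),\theta_t(y'))$ (in fact equality) already supply both a control function and a proper minorant.
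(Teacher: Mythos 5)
Your proposal is correct and matches the paper's own argument: both read the corollary off the identity $d_{C(Y)}(\theta_t(y),\theta_t(y'))=\phi_t(d_Y(y,y'))$, obtain the Lipschitz bound from \cref{prop:phiproperties}(\ref{prop:phi4}), and get effective properness by observing that $\phi_t$ itself is a proper, increasing lower control function (the paper cites that $\phi_t$ is strictly increasing and a homeomorphism, which is the same properness fact you derive from (\ref{prop:phi2}) and (\ref{prop:phi3})).
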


\begin{proof} 
	By \cref{prop:phiproperties}, for all $y, y' \in Y$
\[
d_{C(Y)}(\theta_t(y), \, \theta_t(y')) = \phi_t(d_Y(y,y')) \leq ( 1 /  \max\{\rho(t),1\})d_Y(y,y').
\]
Also, $\phi_t$ is strictly increasing and a homeomorphism (and therefore proper).
\end{proof}

Observe that \cref{cor:embedding} implies that for any metric family $\mcY$, the map of families $\Theta\colon \mcY\to C(\mcY)$ given by $\Theta \colonequals \left\{ \theta_0\colon Y\to C(Y)  \right\}_{Y\in \mcY}$ is a coarse embedding. We prove the following very general extension result.

\begin{prop}\label{prop:extensiontocone}
	Let $\mcX$ be a metric family with a decomposition $X=X_0\cup X_1$ for every $X\in\mcX$. Let $\mcX_0=\{X_0\}_{X\in\mcX}$ and let $\mcY$ be any metric family.
If $F\colon \mcX_0\to \mcY$ is a coarse map then there exist a monotonically increasing function $\rho\colon[0,\infty)\to[0,\infty)$ and a coarse map $F'\colon \mcX\to C(\mcY)$ such that
$\Theta \circ F$ is close to $F'|_{\mcX_0}$. Here the cone $C(\mcY)$ is constructed using the function $\rho$.
\end{prop}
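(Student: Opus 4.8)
The plan is to build $F'$ on all of $\mcX$ by sending each point $x\in X$ to a cone point $(F(x_0), t(x))$, where $x_0\in X_0$ is a nearby point and the cone coordinate $t(x)$ measures (coarsely) the distance from $x$ to $X_0$. The intuition is that as a point of $X_1$ moves far away from $X_0$, its image should move ``up the cone'' so that the enormous distances inside $X_1$ get damped by the denominator $\max\{\rho(t),1\}$ in the cone metric; choosing $\rho$ to grow fast enough relative to the control function of $F$ and the geometry of $\mcX$ will make $F'$ coarse. Concretely, first I would fix, for each $X\in\mcX$, a retraction-like assignment $r_X\colon X\to X_0$ with $d_X(x, r_X(x)) = d_X(x, X_0)$ (choosing, for each $x$, some point of $X_0$ realizing the distance up to, say, an additive $1$; if $X_0=\emptyset$ for some $X$ a trivial modification is needed, but by the decomposition hypothesis $X = X_0 \cup X_1$ and we may assume $X_0\neq\emptyset$, or handle such $X$ separately since then $X=X_1$ and one can use any basepoint — actually the cleanest fix is to note $\Theta\circ F$ is only required to be close to $F'|_{\mcX_0}$, so the values on $X_1$ have freedom). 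Define $t(x) := d_X(x,X_0)$ and $F'\colon \mcX \to C(\mcY)$ by $f'(x) = (f(r_X(x)), t(x))$ for the appropriate $f\in F$ with domain $X_0$. On $X_0$ we have $t(x)=0$ and $r_X(x)=x$ (up to the additive slack), so $f'|_{X_0}$ is close to $\theta_0\circ f = (\Theta\circ F)$ restricted there, as required.

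Next I would verify that $F'$ is coarse, and this is where the choice of $\rho$ enters. Take $x,y\in X$ with $d_X(x,y)\le d$. Using \cref{lem:conemetric} with the test value $s = \max\{t(x),t(y)\} + d$ (which is $\ge \max\{t(x),t(y)\}$), we get
\[
d_{C(Y)}(f'(x),f'(y)) \le 2d + |t(x)-t(y)| + \frac{d_Y(f(r_X(x)), f(r_X(y)))}{\max\{\rho(\max\{t(x),t(y)\}+d),1\}}.
\]
Since $t(\cdot) = d_X(\cdot, X_0)$ is $1$-Lipschitz, $|t(x)-t(y)|\le d$. The triangle inequality gives $d_X(r_X(x), r_X(y)) \le d_X(x,y) + t(x) + t(y) + 2 \le d + 2t(x) + 2t(y) + 2$ roughly (absorbing the additive slack), so if $\rho_F$ is the control function of $F$, the numerator is at most $\rho_F(d + 2t(x)+2t(y)+2)$, while the denominator is at least $\max\{\rho(\max\{t(x),t(y)\}+d),1\}$. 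The point is to choose $\rho$ growing fast enough — e.g. $\rho(s) := \rho_F(4s+2) + 1$ or a similar dominating function — so that for $\max\{t(x),t(y)\}$ large the quotient is bounded (indeed $\le 1$), and for $\max\{t(x),t(y)\}$ in a bounded range the numerator $\rho_F(d+\text{bounded})$ is itself bounded by a function of $d$ alone. Combining the cases produces a single non-decreasing control function for $F'$ depending only on $d$. I would also need $\rho$ monotonically increasing, which is arranged by taking $\rho(s) = \sup_{0\le u \le s}\rho_F(4u+2) + 1$ if $\rho_F$ is not already monotone (it is, by definition of control function).

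The main obstacle I expect is the bookkeeping around $r_X$ and the additive constants: one must be careful that the ``nearby point of $X_0$'' map does not introduce distortions that $\rho$ cannot absorb, and that the estimate $d_X(r_X(x), r_X(y)) \le d + t(x)+t(y) + O(1)$ is genuinely uniform over all $X\in\mcX$ (it is, since it only uses the triangle inequality in each $X$ separately). A secondary subtlety is the degenerate case where some $X\in\mcX$ has $X_1 = \emptyset$ or the decomposition is trivial; there $F'$ should just restrict to (a coarse-equivalent copy of) $\Theta\circ F$, and no real work is needed. Once the uniform estimate is in hand, assembling the control function is the routine two-case argument sketched above, and closeness on $\mcX_0$ is immediate from $t\equiv 0$ there and the bounded slack in the definition of $r_X$.
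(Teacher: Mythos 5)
Your proposal is correct and follows essentially the same route as the paper's proof: define $f'(x)=(f(p(x)),d_X(x,X_0))$ using a near-nearest-point projection $p$ to $X_0$, choose $\rho$ to dominate the control function of $F$ after a linear rescaling (the paper takes $\rho(t)=\max\{\rho'(3t+2),1\}$, you take $\rho_F(4s+2)+1$), and verify coarseness via the same triangle-inequality estimate on $d_X(p(x),p(x'))$ together with the cone-metric bound, with closeness on $\mcX_0$ coming from the bounded slack in $p$. The small arithmetic slip ($t(x)+t(y)$ vs.\ $2t(x)+2t(y)$) is harmless and self-corrected, so no substantive gap remains.
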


\begin{proof}
	Let $\rho'\colon[0,\infty)\to[0,\infty)$ be a monotonically increasing function such that
$d_Y(f(x),f(y)) \leq \rho'(d_X(x,y))$ for all $x,y\in X_0\in \mcX_0$ and $f\colon X \to Y$ in  $F$. Define $\rho(t) \colonequals \max\{\rho'(3t+2),1\}$. For each $X=X_0\cup X_1\in \mcX$, let $p\colon X_0\cup X_1\to X_0$ be a map with $d_X(p(x),x)\leq d_X(X_0,x)+1$ and define $f'\colon X_0\cup X_1\to C(Y)$ by $x\mapsto (f(p(x)),d_X(x,X_0))$. We have 
	\begin{align*}
		d_X(p(x),p(x')) & \leq d_X(x,x')+d_X(x,X_0)+d_X(x',X_0)+2 \\
                 &\leq 3\max\{d_X(x,x'), \, d_X(x,X_0), \, d_X(x',X_0)\}+2
	\end{align*}
and thus,
	\begin{align*}
		\tfrac{d_Y(f(p(x)),f(p(x')))}{\max\{\rho\left(\max\{d_X(x, \, X_0),d_X(x', \, X_0)\}\right),1\}} & \leq\tfrac{\rho'\left(3\max\{d_X(x,x'), \, d_X(x, \, X_0), \, d_X(x', \, X_0)\}+2\right)}{\max\{\rho'\left(3\max\{d_X(x,X_0), \, d_X(x',X_0)\}+2\right),1\}}\\
		& \leq \max\{\rho'(3d_X(x,x')+2),1\}\\
		& =\rho(d_X(x,x')).
	\end{align*}
Hence,
	\begin{align*}
		d_{C(Y)}(f'(x),f'(x'))&\leq | d_X(x,X_0) - d_X(x',X_0) |+\tfrac{d_Y(f(p(x)),f(p(x')))}{\max\{\rho\left(\max\{d_X(x,X_0),d_X(x', X_0)\}\right),1\}}\\
		&\leq d_X(x,x')+\rho(d_X(x,x'))
	\end{align*}
which shows that $f'$ is a coarse map. Observe that for $x \in X_0$,
\[
d_{C(Y)}(\theta_0 \circ f(x),f'(x)) = d_{C(Y)}((f(x),0), \, (f(p(x)),0) ) \leq d_X(x, p(x)) \leq 1,
\]
and so $\theta_0 \circ f$ and $f'|_{X_0}$ are close.
\end{proof}

Next, we analyze the large scale dimension theory of $C(\mcY)$ for a metric family~$\mcY$.

\begin{lemma}\label{lem:asdimANzero}
	Let $\mcY$ be a metric family and $\mcB$  be a family of subsets of $[0, \infty)$ with the Euclidean metric. Let ${\mcY}_{\mcB} = \left\{ Y \times B \subset C(Y)~|~{Y\in \mcY, B\in\mcB}\right\}$. If $\asdim_{{\rm AN}}(\mcY) < \infty$ and  $\asdim_{{\rm AN}}(\mcB) = 0$, then $\asdim_{{\rm AN}}({\mcY}_{\mcB}) \leq  \asdim_{{\rm AN}}({\mcY})$.
\end{lemma}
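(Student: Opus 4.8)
The plan is to identify the metric each $Y\times B$ inherits from $C(Y)$, reduce to the case where the $[0,\infty)$-factor is short, and then apply a rescaled form of the hypothesis $\asdim_{{\rm AN}}(\mcY)\leq n$. Write $n=\asdim_{{\rm AN}}(\mcY)$ and, after enlarging $M$ if necessary, fix an $n$-dimensional control function $r\mapsto Mr+b$ for $\mcY$ with $M\geq 1$. Recall the identity stated just before \cref{prop:phiproperties}, namely $d_{C(Y)}\big((y,t),(y',t')\big)=\phi_{\max\{t,t'\}}(d_Y(y,y'))+|t-t'|$; this is precisely the metric on $Y\times B\subseteq C(Y)$.

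The key preliminary step is the following sub-lemma: \emph{the metric family $\{(Y,\phi_t\circ d_Y)\mid Y\in\mcY,\ t\geq 0\}$ admits an $n$-dimensional control function independent of $t$.} To see this, fix $S>0$ and $t\geq 0$ and put $R_0\colonequals\phi_t^{-1}(S)$, which makes sense since $\phi_t$ is a homeomorphism of $[0,\infty)$ by \cref{prop:phiproperties}(\ref{prop:phi6}). Apply the hypothesis to $Y$ at scale $R_0$ to get a cover $\mcU=\mcU_0\cup\cdots\cup\mcU_n$ of $Y$ with each $\mcU_i$ $R_0$-disjoint in $d_Y$ and $\mesh_{d_Y}(\mcU)\leq MR_0+b$. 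Since $\phi_t$ is strictly increasing (\cref{prop:phiproperties}(\ref{prop:phi2})), each $\mcU_i$ is $S$-disjoint for $\phi_t\circ d_Y$; and since $\phi_t$ is non-decreasing, subadditive with $\phi_t(Mr)\leq M\phi_t(r)$ (\cref{prop:phiproperties}(\ref{prop:phi8})), and satisfies $\phi_t(b)\leq b$ (\cref{prop:phiproperties}(\ref{prop:phi4}), as $\phi_t(0)=0$), we obtain $\mesh_{\phi_t\circ d_Y}(\mcU)\leq\phi_t(MR_0+b)\leq M\phi_t(R_0)+\phi_t(b)\leq MS+b$. Thus $r\mapsto Mr+b$ works uniformly in $t$.

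Now fix $R>0$. Since $\asdim_{{\rm AN}}(\mcB)=0$, there are constants $M',b'$ such that every $B\in\mcB$ decomposes as an $R$-disjoint union $B=\bigsqcup_{R\text{-disjoint}}\{B_k\}_k$ with $\mesh(B_k)\leq M'R+b'$; put $\delta\colonequals M'R+b'$ and $t_k\colonequals\inf B_k$. For $(y,t),(y',t')\in Y\times B_k$ we have $t_k\leq\max\{t,t'\}\leq t_k+\delta$, so \cref{prop:phiproperties}(\ref{prop:phi1}) and (\ref{prop:phi9}) give $\phi_{\max\{t,t'\}}\leq\phi_{t_k}\leq\phi_{\max\{t,t'\}}+2\delta$; hence, writing $\widetilde d_k\big((y,t),(y',t')\big)\colonequals\phi_{t_k}(d_Y(y,y'))+|t-t'|$, we get
\[ \widetilde d_k\big((y,t),(y',t')\big)-2\delta \ \leq\ d_{C(Y)}\big((y,t),(y',t')\big)\ \leq\ \widetilde d_k\big((y,t),(y',t')\big)\qquad\text{on }Y\times B_k. \]
For each $k$, apply the sub-lemma to $(Y,\phi_{t_k}\circ d_Y)$ at scale $S\colonequals R+2\delta=(1+2M')R+2b'$, obtaining a cover $\mcV_0^{(k)}\cup\cdots\cup\mcV_n^{(k)}$ with each $\mcV_i^{(k)}$ $S$-disjoint and $\mesh_{\phi_{t_k}\circ d_Y}\leq MS+b$. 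For $0\leq i\leq n$ set $\mcV_i\colonequals\{V\times B_k\mid k,\ V\in\mcV_i^{(k)}\}$; these cover $Y\times B$. If $V\times B_k$ and $V'\times B_{k'}$ are distinct members of $\mcV_i$, then either $k\neq k'$, in which case any two of their points have $[0,\infty)$-coordinates differing by more than $R$ and so $d_{C(Y)}>R$; or $k=k'$ and $V\neq V'$, in which case $\phi_{t_k}\circ d_Y>S$ across $V$ and $V'$, so $d_{C(Y)}\geq\widetilde d_k-2\delta>S-2\delta=R$. Thus each $\mcV_i$ is $R$-disjoint. Finally, on $V\times B_k$ we have $d_{C(Y)}\leq\widetilde d_k$, so $\mesh_{C(Y)}(V\times B_k)\leq\mesh_{\phi_{t_k}\circ d_Y}(V)+\mesh(B_k)\leq(MS+b)+(M'R+b')$, which is of the form $\widehat M R+\widehat b$ with $\widehat M=M(1+2M')+M'$ and $\widehat b=2Mb'+b+b'$ depending only on $M,b,M',b'$. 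Since $R$ was arbitrary, $r\mapsto\widehat M r+\widehat b$ is an $n$-dimensional control function for $\mcY_{\mcB}$, giving $\asdim_{{\rm AN}}(\mcY_{\mcB})\leq n$.

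The main obstacle is obtaining control functions uniform both in the cone height $t_k$ and in the decomposition scale $R$: the pieces $B_k$ of $\mcB$ have diameter growing linearly in $R$, so they cannot be absorbed as a bounded family with a fixed mesh bound, and the $t$-dependence of the cone metric is genuine. What rescues the argument is that the $[0,\infty)$-direction sits in the cone metric isometrically (coefficient $1$), so it contributes additively to both disjointness and mesh, and that the concavity and subadditivity of $\phi_t$ allow the product-style mesh estimate to survive the rescaling $R_0=\phi_t^{-1}(S)$.
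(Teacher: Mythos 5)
Your proof is correct and follows essentially the same route as the paper: decompose each $B$ into $R$-disjoint pieces of mesh $\leq M'R+b'$, cover $Y$ at a scale rescaled through $\phi_t$ using the uniform Assouad--Nagata control function for $\mcY$, and take the product cover $\{V\times B_k\}$, estimating disjointness and mesh via the properties of $\phi_t$ in \cref{prop:phiproperties}. The only difference is bookkeeping: the paper works at the top endpoint $a_k+M'R+b'$ and takes $\phi^{-1}_{a_k+M'R+b'}(R)$-disjoint covers of $Y$ directly, while you anchor at $t_k=\inf B_k$, compare $d_{C(Y)}$ to the auxiliary metric $\phi_{t_k}(d_Y)+|t-t'|$ up to $2\delta$, and compensate with the enlarged scale $S=R+2\delta$ --- the same argument in a mildly repackaged form.
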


\begin{proof}
	Since $\asdim_{{\rm AN}}(\mcB) = 0$ there exist non-negative constants ${M'}$ and ${b'}$ such that for each $B\in\mcB$ and each $R > 0$ there exists a cover $\mcA_B$ of $B$ that is $R$-disjoint and $\mesh (\mcA_B) \leq {M'} R + {b'}$. Note that $\mcA_B$ must be a countable collection otherwise $[0,\infty)$ would have an uncountable discrete subset, an impossibility. Let $\mcA_B = \{ A_j ~|~  j=1, 2, \dots\}$  and for each $j \geq 1$, let $a_j = \inf A_j$. Since $\mcA_B$  is $R$-disjoint,  $|a_i - a_j| \geq R$ for $i \neq j$. Observe that $A_k\subseteq [a_k,a_k+M'R+b']$. 

	Let $n =\asdim_{{\rm AN}}({\mcY})$. There is an $M\geq 1$ and a $b \geq 0$ such that for each $Y \in \mcY$, $R>0$ and positive integer $k$, there is a cover $\mcU^k$ of $Y$ with the property: 
$\mcU^k = \mcU^k_0  \,  \cup \cdots \cup \,  \mcU^k_n$, where each collection $\mcU^k_j$ is $\phi_{a_k + {M'} R + {b'}}^{-1}(R)$-disjoint and $\mesh (\mcU^k) \leq M \phi_{a_k + {M'} R + {b'}}^{-1}(R) + b$. For $0 \leq j \leq n$, let ${\mcW}_j = \{ U \times A_k ~|~ U \in \mcU^k_j, \, k \geq1\}$. Observe that $\mcW = {\mcW}_0 \cup \cdots \cup {\mcW}_n$ is a cover of $Y \times B$ and each ${\mcW}_j$ is $R$-disjoint. 

	Let $(y,t), (y', t') \in U \times A_k \in \mcW_j$.  Then, making use of \cref{prop:phiproperties}, we have
	\begin{align*}
		d_{C(Y)}( (y,t), (y', t')) &= \phi_{\max\{t,t'\}}(d_Y(y,y')) + |t -t'| \\
		& \leq \phi_{a_k}(d_Y(y,y')) + {M'} R + {b'} \\
                 & \leq \phi_{a_k + {M'} R + {b'}}(d_Y(y,y')) + 2({M'} R + {b'}) +{M'} R + {b'} \\
                 & \leq \phi_{a_k + {M'} R + {b'}}\left(M \phi_{a_k + {M'} R + {b'}}^{-1}(R) +b\right) + 3{M'}R  + 3b' \\
                 & \leq M R+ \phi_{a_k + {M'} R + {b'}}(b) + 3{M'}R  + 3b' \\
                 & \leq (M+3{M'} )R +   \phi_{0}(b) +3 b'.
	\end{align*}
Hence, $\mesh(\mcW)  \leq (M+3{M'} )R +   \phi_{0}(b) +3 b'$
and so
\[
\asdim_{{\rm AN}}({\mcY}_B) \leq  n =\asdim_{{\rm AN}}({\mcY}).  \qedhere
\]
\end{proof}

\begin{theorem}\label{thm:coneasdimAN}
	Let $\mcY$ be a metric family with $\asdim_{{\rm AN}}(\mcY) < \infty$. 
Then
\[
\asdim_{{\rm AN}}(C(\mcY)) \leq  \asdim_{{\rm AN}}(\mcY) +1.
\]
\end{theorem}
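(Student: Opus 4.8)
The plan is to cover $C(Y)$ for $Y \in \mcY$ by slicing the cone parameter $[0,\infty)$ into a controlled family of subintervals and applying \cref{lem:asdimANzero} on each slice, then reassembling. More precisely, fix $R > 0$. The $1$-dimensional direction costing the extra $+1$ will be the $[0,\infty)$-coordinate: partition $[0,\infty)$ into two families of $R$-disjoint intervals in the standard way (e.g.\ intervals of the form $[2kR,(2k+1)R)$ for even-indexed pieces and $[(2k+1)R,(2k+2)R)$ for odd-indexed pieces), giving a cover $\mcB = \mcB_0 \cup \mcB_1$ of $[0,\infty)$ with each $\mcB_i$ being $R$-disjoint and $\mesh(\mcB) \leq 2R$; hence $\asdim_{\mathrm{AN}}([0,\infty)) = 0$ with a control function independent of everything. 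Apply \cref{lem:asdimANzero} with this $\mcB$ to the family $\mcY_{\mcB} = \{Y \times B \subseteq C(Y)\}$ to obtain $\asdim_{\mathrm{AN}}(\mcY_{\mcB}) \leq \asdim_{\mathrm{AN}}(\mcY) =: n$, with a control function $D(R) = M'' R + b''$ depending only on the Assouad--Nagata data of $\mcY$.

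First I would make precise how a cover witnessing $\asdim_{\mathrm{AN}}(\mcY_{\mcB}) \leq n$ together with the interval decomposition $\mcB = \mcB_0 \cup \mcB_1$ yields an $(n+1)$-dimensional cover of $C(Y)$. For each $Y$ and each $R > 0$: on each strip $Y \times B$ (with $B \in \mcB$) take the cover $\mcV^{Y,B} = \mcV^{Y,B}_0 \cup \cdots \cup \mcV^{Y,B}_n$ given by the proof of \cref{lem:asdimANzero}, where each $\mcV^{Y,B}_j$ is $R$-disjoint in $C(Y)$ (not merely in $Y \times B$ with the subspace metric — this needs a brief check, see below) and the mesh is bounded by $D(R)$. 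Now for $0 \leq j \leq n$ set $\mcW_j := \bigcup\{\mcV^{Y,B}_j \mid B \in \mcB_0\}$ and for the ``extra'' index $n+1$ set $\mcW_{n+1} := \bigcup\{\mcV^{Y,B}_j \mid B \in \mcB_1,\ 0 \leq j \leq n\}$ — or, more symmetrically, use the two-color structure on $\mcB$ to interleave: define $\mcW_j$ for $0 \le j \le n$ from the even intervals and $\mcW_{n+1}$ from all pieces over odd intervals. Because distinct intervals in $\mcB_0$ are $R$-apart in the $[0,\infty)$-coordinate and $|t - t'| \leq d_{C(Y)}((y,t),(y',t'))$, sets lying over different intervals of $\mcB_0$ are automatically $R$-disjoint in $C(Y)$; combined with the $R$-disjointness of each $\mcV^{Y,B}_j$, this makes each $\mcW_j$ $R$-disjoint. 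One then checks $\mcW_0 \cup \cdots \cup \mcW_{n+1}$ covers $C(Y)$ and has mesh $\leq \max\{D(R), 2R\}$, which is of the form $M''' R + b'''$ independent of $Y$; hence $\asdim_{\mathrm{AN}}(C(\mcY)) \leq n + 1$.

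The main obstacle I anticipate is the bookkeeping around disjointness \emph{as measured in the ambient metric} $d_{C(Y)}$ rather than in the subspace metrics of the strips $Y \times B$. The subspace metric on $Y \times B \subseteq C(Y)$ is the restriction of $d_{C(Y)}$, so $R$-disjointness of a family within $Y \times B$ is literally $R$-disjointness in $C(Y)$ for that family; the subtlety is only that a set $U \times A_k \in \mcV^{Y,B_0}_j$ and a set $U' \times A_{k'} \in \mcV^{Y,B'_0}_{j}$ lying over two \emph{different} intervals $A_k \subset B_0$, $A_{k'} \subset B'_0$ of the same color must also be $R$-apart — which is exactly guaranteed by the $|t-t'|$ term in $d_{C(Y)}$ and the $R$-disjointness of $\mcB_0$, but deserves an explicit one-line argument. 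There is a second, minor wrinkle: \cref{lem:asdimANzero} is stated for a \emph{fixed} family $\mcB$ with $\asdim_{\mathrm{AN}}(\mcB) = 0$, and here I need the covers it produces for the specific $R$ I am working with, so I should either quote the lemma's proof (which already constructs $R$-disjoint covers with $R$-dependence) or re-invoke it with the observation that its control constants $M'', b''$ depend only on $\mcY$ and the fixed constants $M', b'$ coming from $\mcB$, which for intervals of length $2R$ one may take to be $M' = 2$, $b' = 0$. With these two points handled, the mesh estimate is immediate from the displayed computation at the end of the proof of \cref{lem:asdimANzero}.
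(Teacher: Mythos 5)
Your reassembly step has a genuine gap. You define the extra color class $\mcW_{n+1}$ to be \emph{all} pieces $\mcV^{Y,B}_j$, $0\leq j\leq n$, lying over the odd intervals $B\in\mcB_1$. But within a single strip $Y\times B$ the classes $\mcV^{Y,B}_0,\dots,\mcV^{Y,B}_n$ jointly cover the strip, so pieces of different colors overlap (or at least come arbitrarily close); merging them into one class destroys $R$-disjointness, and your argument via the $|t-t'|$ term only separates pieces over \emph{different} intervals of the same parity, not pieces of different colors over the \emph{same} interval. The only disjointness one can salvage from this stacking is to keep the pairs (color $j$, parity of the strip) separate, which yields $2(n+1)$ color classes and hence only $\asdim_{\rm AN}(C(\mcY))\leq 2\asdim_{\rm AN}(\mcY)+1$. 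Getting the sharp $+1$ is exactly the content of a Hurewicz-type theorem and cannot be obtained by naively interleaving even and odd strips; your sketch does not supply the extra idea (covers at staggered scales, saturation, etc.) that such proofs require.

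This is precisely why the paper takes a different route: it considers the $1$-Lipschitz projection $p\colon C(\mcY)\to\{[0,\infty)\}$, $p_Y(y,t)=t$, observes that any subfamily $\mcA$ of $C(\mcY)$ is a subfamily of $\mcY_{p(\mcA)}$, so \cref{lem:asdimANzero} gives $\asdim_{\rm AN}(p)\leq\asdim_{\rm AN}(\mcY)$, and then applies the Hurewicz-type \cref{thm:BDLMtheorem} together with $\asdim_{\rm AN}([0,\infty))=1$ to conclude $\asdim_{\rm AN}(C(\mcY))\leq\asdim_{\rm AN}(\mcY)+1$. Your use of \cref{lem:asdimANzero} on the strips is in effect the bound on $\asdim_{\rm AN}(p)$, but the Hurewicz step must be invoked (or reproved), not replaced by the two-coloring. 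Two smaller points: $\asdim_{\rm AN}([0,\infty))$ is $1$, not $0$ as you assert; and the family of length-$2R$ intervals does not have $0$-dimensional AN control function $2R'+0$ (for $R'<2R$ an interval admits no nontrivial $R'$-disjoint cover, so the additive constant must be of order $R$) --- this particular issue is patchable, since the proof of \cref{lem:asdimANzero} only uses covers of the $B$'s at the single scale $R$, but it does not repair the main gap above.
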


\begin{proof}
	Let $p\colon C(\mcY)\to \{ [0,\infty) \}$ be $\left\{ p_Y \colon C(Y) \to [0,\infty)\right\}_{Y \in \mcY}$, where $p_Y(y,t) = t$ for $(y,t) \in C(Y)$. The map $p$ is $1$-Lipschitz since
	\begin{align*}
		|p_Y(y,t) -  p_Y(y',t')| &= |t -t'| \\
		& \leq \phi_{\max\{t, t' \} }(d_Y(y,y')) + |t-t'| \\
		& = d_{C(Y)}((y,t), (y',t')).
	\end{align*}
Recall that ${\mcY}_{\mcB} = \left\{ Y \times B \subset C(Y)~|~{Y\in \mcY, B\in\mcB}\right\}$, where $\mcB$ is a family of subsets of $[0,\infty)$. Note that for any $Y \in \mcY$ and any $A \subset Y \times [0,\infty)$, we have $A \subset Y \times p_Y(A)$ and $p_Y(A) =  p_Y(Y \times p_Y(A))$. Hence, any subfamily $\mcA$ of $C(\mcY)$ is also a subfamily of $\mcY_{p(\mcA)}$ and 
$\asdim_{{\rm AN}}(\mcA) \leq \asdim_{{\rm AN}}(\mcY_{p(\mcA)})$. 

Let $\fC$ denote the collection of all families, $\mcB$, of subsets of $[0,\infty)$. We have
\[
\asdim_{{\rm AN}}(p) = \sup\{ \asdim_{{\rm AN}}(\mcY_{\mcB}) ~|~  \mcB \in \fC \text{ and } \asdim_{{\rm AN}}(\mcB)=0\}.
\]
By \cref{lem:asdimANzero}, $\asdim_{{\rm AN}}(p) \leq \asdim_{{\rm AN}}(\mcY)$.
Note that $\asdim_{{\rm AN}}([0,\infty)) =1$. By \cref{thm:BDLMtheorem},
\[
\asdim_{{\rm AN}}(C(\mcY)) \leq \asdim_{{\rm AN}}(p) + \asdim_{{\rm AN}}([0,\infty))
\leq \asdim_{{\rm AN}}(\mcY) + 1. \qedhere
\]
\end{proof}

We can now prove the main theorem of this section.

\begin{proof}[Proof of {\rm \cref{thm:extension}}]
Let $n = \asdim(\mcY)$. By \cite[Proof of Theorem 4.1]{fdc}, there is a coarse embedding $\Psi \colon \mcY\to \mcT$, where each $T\in \mcT$ is a metric product of $n+1$ Gromov 0-hyperbolic spaces. Recall that $\Theta \colon \mcT \to C(\mcT)$
is a coarse embedding (see the discussion following \cref{cor:embedding}) and thus, so is $\Theta \circ \Psi$. By \cref{lem:productof0hyp}, $\asdim_{{\rm AN}} (\mcT)  \leq n+1$. \cref{thm:coneasdimAN} implies that $\asdim_{{\rm AN}} (C(\mcT))  \leq n + 2$ and so $\asdim (C(\mcT)) \leq \asdim_{{\rm AN}} (C(\mcT))   \leq n + 2$
and, in particular, $\asdim (C(\mcT))$ is finite. That is, $C(\mcT) \in \fR_1$.
The conclusion of \cref{thm:extension} now follows by applying \cref{prop:extensiontocone} to $\Psi \circ F\colon \mcX_0\to \mcT$.
\end{proof}


\section{Permanence properties}\label{sec:inheritance}

In this section we prove several permanence properties for regular FDC.  We begin by showing that regular FDC satisfies Fibering Permanence (\cref{def:fibering}) and then show that all of the other permanence properties are satisfied for every collection of metric families that satisfies Fibering Permanence and contains all metric families with finite asymptotic dimension. We then use these permanence properties to show that several important classes of groups have regular FDC. Finally, we show that regular FDC possesses Finite Quotient Permanence (\cref{defi:quotients}), a property that FDC is not known to possess.

\begin{defi} \label{def:fibering}
A collection of metric families, $\fC$, satisfies \emph{Fibering Permanence} if the following holds. Let $F\colon \mcX\to\mcY$ be a coarse map of metric families. If $\mcY\in\fC$ and for every bounded subfamily $\mcZ$ of $\mcY$ the inverse image $F^{-1}(\mcZ) \in \fC$, then $\mcX\in\fC$.
\end{defi}

\begin{thm}\label{thm:fibering}
Regular FDC satisfies Fibering Permanence.
\end{thm}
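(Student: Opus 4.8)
The plan is to show that $\fR$ satisfies Fibering Permanence by reducing it, via a transfinite argument, to the defining closure property of $\fR$ established in \cref{prop:ordinals}. Suppose $F\colon\mcX\to\mcY$ is a coarse map with $\mcY\in\fR$ and $F^{-1}(\mcZ)\in\fR$ for every bounded subfamily $\mcZ$ of $\mcY$; we must show $\mcX\in\fR$. Since $\mcY\in\fR$, there is a countable ordinal $\alpha$ with $\mcY\in\fR_\alpha$, so by definition there is a metric family $\mcW\in\fR_1$ and a coarse map $G\colon\mcY\to\mcW$ such that for every bounded subfamily $\mcC$ of $\mcW$, the inverse image $G^{-1}(\mcC)$ lies in some $\fR_\beta$ with $\beta<\alpha$. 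Consider the composite $G\circ F\colon\mcX\to\mcW$, where $\mcW\in\fR_1$ has finite asymptotic dimension. To conclude $\mcX\in\fR$ via \cref{prop:ordinals}(1), it suffices to show $\mcX$ regularly decomposes over $\fR$, i.e.\ that $(G\circ F)^{-1}(\mcC)\in\fR$ for every bounded subfamily $\mcC$ of $\mcW$.

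So fix a bounded subfamily $\mcC$ of $\mcW$. Then $(G\circ F)^{-1}(\mcC) = F^{-1}(G^{-1}(\mcC))$. Now $G^{-1}(\mcC)$ is a subfamily of $\mcY$, but in general it is \emph{not} bounded, so we cannot directly apply the hypothesis on $F$. The key observation is that $G^{-1}(\mcC)$ lies in $\fR_\beta$ for some $\beta<\alpha$, and hence we may run an induction on $\alpha$: the statement to prove by transfinite induction is

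\begin{quote}
$(\star_\alpha)$: if $\mcY\in\fR_\alpha$ and $F\colon\mcX\to\mcY$ is coarse with $F^{-1}(\mcB)\in\fR$ for every bounded subfamily $\mcB$ of $\mcY$, then $\mcX\in\fR$.
\end{quote}

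The base case $\alpha=0$ is immediate: if $\mcY$ is bounded, take $\mcB=\mcY$ to get $\mcX=F^{-1}(\mcY)\in\fR$. For the inductive step, with $\mcY\in\fR_\alpha$ and $G\colon\mcY\to\mcW$, $\mcW\in\fR_1$, as above: for each bounded $\mcC\subseteq\mcW$ we have $G^{-1}(\mcC)\in\fR_\beta$ for some $\beta<\alpha$, and the restriction $F|\colon F^{-1}(G^{-1}(\mcC))\to G^{-1}(\mcC)$ is coarse and inherits the hypothesis (any bounded subfamily $\mcB$ of $G^{-1}(\mcC)$ is a bounded subfamily of $\mcY$, so $F^{-1}(\mcB)\in\fR$ — here we use that the relevant inverse images coincide, since $\mcB\subseteq G^{-1}(\mcC)\subseteq\mcY$). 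By the induction hypothesis $(\star_\beta)$, applied to $F|$ and $G^{-1}(\mcC)\in\fR_\beta$, we conclude $F^{-1}(G^{-1}(\mcC)) = (G\circ F)^{-1}(\mcC)\in\fR$. Since this holds for every bounded subfamily $\mcC$ of $\mcW$, the map $G\circ F\colon\mcX\to\mcW$ exhibits $\mcX$ as regularly decomposing over $\fR$, so $\mcX\in\fR$ by \cref{prop:ordinals}(1). This closes the induction and proves $(\star_\alpha)$ for all $\alpha$, hence Fibering Permanence for $\fR$.

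\textbf{Main obstacle.} The only subtle point is bookkeeping about inverse images and subfamilies: one must check that restricting $F$ along the nested subfamilies behaves well, namely that for a bounded subfamily $\mcB$ of $G^{-1}(\mcC)$ the inverse image of $\mcB$ under the restricted map agrees with $F^{-1}(\mcB)$ as computed in $\mcX$, so that the hypothesis transfers down the induction. This is essentially formal from the definition of inverse image of a subfamily, but it is where care is needed. Everything else is a direct transfinite induction feeding into \cref{prop:ordinals}; no asymptotic-dimension estimates or cone constructions from the earlier sections are needed for this particular theorem.
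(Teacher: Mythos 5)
Your proposal is correct and follows essentially the same route as the paper: a transfinite induction on $\alpha$ over the statement that any coarse map $F\colon\mcX\to\mcY$ with $\mcY\in\fR_\alpha$ and $F^{-1}(\mcB)\in\fR$ for all bounded subfamilies $\mcB$ pulls regular FDC back to $\mcX$, with the inductive step composing with the structure map $G\colon\mcY\to\mcW$, restricting $F$ over $G^{-1}(\mcC)$, and invoking \cref{prop:ordinals}. The bookkeeping point you flag (that inverse images under the restricted map agree with those computed in $\mcX$) is handled implicitly in the paper and is indeed routine.
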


\begin{proof}
	Let $\alpha$ be an ordinal number and let $P(\alpha)$ be the following statement: If there exists a $\mcY \in \fR_\alpha$ and a coarse map $F:\mcX \to \mcY$ such that for every bounded subfamily $\mcZ$ of $\mcY$, $F^{-1}(\mcZ)\in \fR$, then $\mcX \in \fR$. We prove by induction on $\alpha$ that $P(\alpha)$ is true for all $\alpha$, thereby establishing the theorem.

If $\mcY$ is bounded and $F:\mcX \to \mcY$ is a coarse map such that for every bounded subfamily $\mcZ$ of $\mcY$, $F^{-1}(\mcZ)\in \fR$, then $\mcX=F^{-1}(\mcY)\in \fR$. Thus, $P(0)$ is true. Now assume that $P(\beta)$ is true for every $\beta < \alpha$, and assume there exists a $\mcY \in \fR_\alpha$ and a coarse map $F:\mcX \to \mcY$ such that for every bounded subfamily $\mcZ$ in $\mcY$, $F^{-1}(\mcZ)\in \fR$. 

Since $\mcY \in \fR_\alpha$, there exists a metric family $\mcW$ with $\asdim(\mcW) < \infty$ and a coarse map $G:\mcY \to \mcW$ such that for every bounded subfamily $\mcB$ in $\mcW$, there exists a $\beta < \alpha$ such that $G^{-1}(\mcB)\in \fR_\beta$. Consider the coarse map $G\circ F : \mcX \to \mcW$ and let $\mcB$ be a bounded subfamily of $\mcW$. By \cref{prop:ordinals}, it will follow that $\mcX \in \fR$ if we can show that $(G\circ F)^{-1}(\mcB) \in \fR$. Note that the restriction of $F$ to $(G\circ F)^{-1}(\mcB)$, denoted $F\colon (G\circ F)^{-1}(\mcB)\to G^{-1}(\mcB)$, is a coarse map and that $G^{-1}(\mcB)\in \fR_\beta$ for some $\beta < \alpha$. Now let $\mcZ$ be a bounded subfamily of $G^{-1}(\mcB)$ (which is a subfamily of  $\mcY$). By assumption, $F^{-1}(\mcZ)\in \fR$. Applying the induction hypothesis $P(\beta)$ to $(G\circ F)^{-1}(\mcB)$, we have that $(G\circ F)^{-1}(\mcB) \in \fR$.
\end{proof}

Since regular FDC satisfies Fibering Permanence and the definition of regular decomposition is a special case of fibering, we immediately obtain the following theorem.

\begin{thm}\label{thm:smallest_fibering}
The collection of metric families with regular FDC is the smallest collection of metric families that contains all families with finite asymptotic dimension and satisfies Fibering Permanence.
\end{thm}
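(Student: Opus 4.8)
The plan is to deduce \cref{thm:smallest_fibering} from two facts already established: that $\fR$ satisfies Fibering Permanence (\cref{thm:fibering}) and that $\fR_1$ is exactly the collection of metric families with finite asymptotic dimension, together with the characterization of $\fR$ as the smallest collection stable under regular decomposition containing $\fB$ (\cref{prop:ordinals}). Let $\fC$ be any collection of metric families that contains all families of finite asymptotic dimension and satisfies Fibering Permanence. I must show two inclusions: first that $\fR$ itself is such a collection (which is immediate: $\fR \supseteq \fR_1$ contains all families of finite asymptotic dimension, and Fibering Permanence holds by \cref{thm:fibering}), and second that any such $\fC$ contains $\fR$.

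For the second inclusion I would argue that $\fC$ is stable under regular decomposition and contains $\fB$, so that $\fR \subseteq \fC$ by part (2) of \cref{prop:ordinals}. That $\fC$ contains $\fB$ is clear since every bounded metric family has asymptotic dimension $0$, hence lies in $\fC$ by hypothesis. For stability under regular decomposition, suppose $\mcX$ regularly decomposes over $\fC$: by \cref{def:rFDC} there is a metric family $\mcY$ with $\asdim(\mcY) < \infty$ and a coarse map $F\colon \mcX \to \mcY$ such that $F^{-1}(\mcB) \in \fC$ for every bounded subfamily $\mcB$ of $\mcY$. Since $\asdim(\mcY) < \infty$ we have $\mcY \in \fC$ by hypothesis, and then Fibering Permanence applied to $F\colon \mcX \to \mcY$ yields $\mcX \in \fC$. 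This shows $\fC$ is stable under regular decomposition, completing the proof.

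The main point — really the only substantive observation — is that the definition of ``regular decomposition'' (\cref{def:rFDC}) is literally a special case of the hypothesis of Fibering Permanence (\cref{def:fibering}): in both one has a coarse map $F\colon \mcX \to \mcY$ with $\mcY$ in the collection and all preimages of bounded subfamilies in the collection, and one concludes $\mcX$ is in the collection. The only mismatch is that regular decomposition additionally requires $\mcY$ to have \emph{finite asymptotic dimension} rather than merely lie in the collection, but this is exactly what the hypothesis ``$\fC$ contains all metric families with finite asymptotic dimension'' repairs. So there is no real obstacle; the proof is a short bookkeeping argument combining \cref{thm:fibering}, \cref{prop:ordinals}, and the identification $\fR_1 = \{\mcX \mid \asdim(\mcX) < \infty\}$. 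I would write it in three or four sentences, explicitly invoking \cref{prop:ordinals}(2) for the minimality half and \cref{thm:fibering} for the ``$\fR$ is itself such a collection'' half.
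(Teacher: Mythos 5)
Your proposal is correct and follows the same route as the paper, which derives \cref{thm:smallest_fibering} immediately from \cref{thm:fibering} together with the observation that regular decomposition is a special case of fibering (your use of \cref{prop:ordinals}(2) just makes explicit the minimality step the paper leaves implicit). Nothing is missing; your write-up is simply a more detailed version of the paper's one-line argument.
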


Fibering Permanence is a strong property for a collection of metric families, $\fC$, to possess. As we show below, if $\fC$ satisfies Fibering Permanence and contains all bounded metric families, then $\fC$ must also satisfy Coarse Permanence (\cref{def:coarse_permanence}) and Finite Amalgamation Permanence (\cref{def:amalgam}). If $\fC$ additionally contains all metric families with finite asymptotic dimension, then $\fC$ must also satisfy Finite Union Permanence (\cref{def:finunion}), Union Permanence (\cref{def:union}), and Limit Permanence (\cref{def:limit}).

\begin{thm}\label{thm:coarseinv-general}
	Let $\fC$ be a collection of metric families that satisfies Fibering Permanence and contains all bounded metric families. Then $\fC$ satisfies Coarse Permanence.
\end{thm}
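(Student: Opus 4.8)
The plan is to deduce Coarse Permanence directly from Fibering Permanence by showing that a coarse embedding $F\colon\mcX\to\mcY$, together with $\mcY\in\fC$, verifies the hypotheses of \cref{def:fibering}. The only thing to check is that for every bounded subfamily $\mcZ$ of $\mcY$ the inverse image $F^{-1}(\mcZ)$ lies in $\fC$. Since $\fC$ contains all bounded metric families by hypothesis, it suffices to prove that $F^{-1}(\mcZ)$ is bounded whenever $\mcZ$ is bounded and $F$ is effectively proper.

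\textbf{Key steps.} First I would recall that $F$ being a coarse embedding means, in addition to coarseness, that there is a proper non-decreasing function $\delta\colon[0,\infty)\to[0,\infty)$ with $\delta(d_X(x,y))\leq d_Y(f(x),f(y))$ for all $f\colon X\to Y$ in $F$ and all $x,y\in X$. Second, let $\mcZ$ be a bounded subfamily of $\mcY$, say $\diam(Z)\leq R$ for every $Z\in\mcZ$, and consider a typical element $f^{-1}(A)$ of $F^{-1}(\mcZ)$, where $A\in\mcZ$ is a subspace of some $Y\in\mcY$ and $f\colon X\to Y$ lies in $F$. For $x,y\in f^{-1}(A)$ we have $\delta(d_X(x,y))\leq d_Y(f(x),f(y))\leq\diam(A)\leq R$. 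Third, since $\delta$ is proper and non-decreasing, the set $\{s\geq 0\mid\delta(s)\leq R\}$ is bounded; let $S=S(R)$ be a finite upper bound for it (with the convention that if the set is empty we may take $S=0$). Then $d_X(x,y)\leq S$ for all $x,y\in f^{-1}(A)$, so $\diam(f^{-1}(A))\leq S$. As $S$ depends only on $R$ and $\delta$, not on the particular $A$ or $f$, this bound is uniform over all members of $F^{-1}(\mcZ)$, so $F^{-1}(\mcZ)$ is a bounded metric family and hence lies in $\fC$.

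\textbf{Conclusion.} Having verified that $\mcY\in\fC$ and that $F^{-1}(\mcZ)\in\fC$ for every bounded subfamily $\mcZ$ of $\mcY$, Fibering Permanence (\cref{def:fibering}) applied to the coarse map $F\colon\mcX\to\mcY$ yields $\mcX\in\fC$. Therefore $\fC$ satisfies Coarse Permanence.

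\textbf{Main obstacle.} There is no substantial obstacle here; the argument is essentially a bookkeeping exercise translating the definition of effective properness into a uniform diameter bound. The one point requiring a small amount of care is ensuring that the diameter bound $S(R)$ is genuinely independent of the chosen function $f\in F$ and subspace $A\in\mcZ$ — this is immediate because the control function $\delta$ in the definition of coarse embedding is a single function serving all of $F$ simultaneously, and the hypothesis $\diam(A)\leq R$ is uniform over $\mcZ$ by definition of boundedness.
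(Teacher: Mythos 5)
Your proposal is correct and follows exactly the paper's argument: apply Fibering Permanence to the coarse embedding $F\colon\mcX\to\mcY$ after observing that effective properness forces $F^{-1}(\mcZ)$ to be bounded (hence in $\fC$) for every bounded subfamily $\mcZ$ of $\mcY$. The paper states the boundedness step without the explicit computation via the function $\delta$, but your elaboration is just the routine verification of that same step.
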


\begin{proof}
	Let $F\colon \mcX\to \mcY$ be a coarse embedding with $\mcY\in \fC$. Then for every bounded subfamily $\mcB$ of $\mcY$ the inverse image $F^{-1}(\mcB)$ is again bounded. Thus, $\mcX\in\fC$ by Fibering Permanence.
\end{proof}

Note that we are not able to combine \cref{thm:coarseinv-general} with \cref{thm:fibering} to deduce Coarse Permanence for regular FDC, since our proof of Fibering Permanence for regular FDC uses \cref{prop:ordinals}, which relies on \cref{thm:coarseinv}.

\begin{defi}\label{def:amalgam}
	A collection of metric families, $\fC$, satisfies \emph{Finite Amalgamation Permanence} if the following holds. If $\mcX=\bigcup_{i=1}^n\mcX_i$ and each $\mcX_i \in \fC$, then $\mcX \in \fC$.
\end{defi}

\begin{thm}\label{thm:amalgamation-general}
	Let $\fC$ be a collection of metric families that satisfies Fibering Permanence and contains all bounded metric families. Then $\fC$ satisfies Finite Amalgamation Permanence.
\end{thm}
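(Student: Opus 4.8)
The plan is to derive Finite Amalgamation Permanence from Fibering Permanence together with Coarse Permanence (\cref{thm:coarseinv-general}), feeding into the Fibering hypothesis only maps of families built from identity maps and constant maps. By an easy induction on $n$ it suffices to treat the case $n=2$: writing $\mcX = \left(\bigcup_{i=1}^{n-1}\mcX_i\right) \cup \mcX_n$, the inductive hypothesis puts $\bigcup_{i=1}^{n-1}\mcX_i$ in $\fC$, and the two-family case then finishes the argument.

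First I would record the following special case, call it $(\ast)$: if $\mcX' \in \fC$ and $\mcB \in \fB$, then $\mcX' \cup \mcB \in \fC$. If every member of $\mcX'$ is empty then $\mcX'$ is bounded, so $\mcX' \cup \mcB \in \fB \subseteq \fC$; otherwise fix a point $p_0$ in some member $X_0$ of $\mcX'$ and let $F\colon \mcX' \cup \mcB \to \mcX'$ be the identity on every member of $\mcX'$ and the constant map with value $p_0$ on every member of $\mcB \setminus \mcX'$. This $F$ is coarse, with control function $\rho(t)=t$. For any bounded subfamily $\mcZ$ of $\mcX'$, each element of $F^{-1}(\mcZ)$ is either $\id_W^{-1}(Z) = Z$ for some $W \in \mcX'$ and $Z \in \mcZ$ with $Z$ a subspace of $W$, or the preimage of some $Z$ under one of the constant maps, hence a member of $\mcB$ or empty. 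Thus $F^{-1}(\mcZ) \in \fB \subseteq \fC$, and Fibering Permanence yields $\mcX' \cup \mcB \in \fC$.

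For the two-family case, let $\mcX = \mcX_1 \cup \mcX_2$ with $\mcX_1, \mcX_2 \in \fC$. If every member of $\mcX_1$ is empty then $\mcX_1 \in \fB$ and $\mcX = \mcX_2 \cup \mcX_1 \in \fC$ by $(\ast)$; so assume some $X_0 \in \mcX_1$ contains a point $p_0$, and let $F\colon \mcX \to \mcX_1$ be the identity on every member of $\mcX_1$ and the constant map with value $p_0$ on every member of $\mcX_2 \setminus \mcX_1$. Again $F$ is coarse. Given a bounded subfamily $\mcZ$ of $\mcX_1$, the same analysis as in the previous paragraph shows that every element of $F^{-1}(\mcZ)$ is a member of $\mcZ$, a member of $\mcX_2$, or empty; hence $F^{-1}(\mcZ)$ is a subfamily of $\mcX_2 \cup \mcZ$ together with the empty space. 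Since $\mcZ$ is bounded, $(\ast)$ gives that this family lies in $\fC$, so $F^{-1}(\mcZ) \in \fC$ by Coarse Permanence (\cref{thm:coarseinv-general}). Applying Fibering Permanence to $F\colon \mcX \to \mcX_1$ gives $\mcX \in \fC$.

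The only genuine obstacle here is conceptual rather than computational: one cannot fiber $\mcX$ directly over a bounded family, since a bounded family is a bounded subfamily of itself and the Fibering hypothesis then merely reproduces the desired conclusion. The construction sidesteps this by fibering over $\mcX_1$ itself (which may well be unbounded), arranging that the only spaces occurring in a fiber that are not already bounded pieces of $\mcX_1$ are members of $\mcX_2$, and these are controlled via the bounded-amalgamation step $(\ast)$. All of the inverse-image identifications are routine, relying only on $\id_W^{-1}(Z) = Z$ and on the fact that the preimage of a set under a constant map is either the whole space or empty.
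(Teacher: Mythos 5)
Your argument is correct, but it takes a genuinely different route from the paper. The paper first forms the product family $\mcX_1\times\mcX_2$, fibers it over $\mcX_1$ via the projection (whose preimages of bounded subfamilies are coarsely equivalent to $\mcX_2$) to conclude $\mcX_1\times\mcX_2\in\fC$, and then coarsely embeds the amalgam $\mcX_1\cup\mcX_2$ into the product by fixing basepoints in one member of each family, finishing with Coarse Permanence (\cref{thm:coarseinv-general}). You avoid the product entirely: you fiber the amalgam directly over $\mcX_1$ using only identity and constant maps, after first isolating the lemma $(\ast)$ that adjoining a bounded family preserves $\fC$ (itself a fibering argument of the same shape). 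Both proofs use exactly the hypotheses of the theorem (Fibering Permanence plus all bounded families, via \cref{thm:coarseinv-general}); the paper's version is slightly slicker, needs only one application of Fibering Permanence, and yields as a by-product a weak product permanence ($\mcX_1,\mcX_2\in\fC$ implies $\mcX_1\times\mcX_2\in\fC$), whereas yours needs two fibering steps and the auxiliary lemma but stays inside the amalgam and handles the degenerate cases (empty spaces, empty families) more explicitly than the paper's choice of basepoints does. Your identifications of the preimages are sound: under the paper's definition of $F^{-1}(\mcZ)$ a few extra elements of the form $\id_W^{-1}(Z)$ with $Z$ not a subspace of $W$ can occur, but these are empty or subspaces of members of $\mcZ$, so they are absorbed by the same subfamily/Coarse Permanence step you already invoke.
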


\begin{proof}
It suffices to prove Finite Amalgamation Permanence in the case $n=2$. Let $\mcX_1,\mcX_2\in\fC$ be given and define $\mcX_1\times\mcX_2:=\{X_1\times X_2\mid X_i\in\mcX_i\}$. Define a coarse map $P\colon \mcX_1\times \mcX_2\to \mcX_1$ by projection onto the first factor. For every bounded subfamily $\mcB$ of $\mcX_1$ the  inverse image $P^{-1}(\mcB)$ is coarsely equivalent to $\mcX_2$. Thus, Fibering Permanence implies that $\mcX_1\times \mcX_2\in\fC$. By fixing $Z \in \mcX_1$, $Y \in \mcX_2$, $z \in Z$ and $y\in Y$, we obtain a coarse embedding $F\colon \mcX_1\cup\mcX_2\to \mcX_1\times \mcX_2$ by sending $X_1\in\mcX_1$ to $X_1\times Y$ via $x_1\mapsto (x_1,y)$ and sending $X_2\in\mcX_2$ to $Z\times X_2$ via $x_2\mapsto (z,x_2)$. Therefore, since $\fC$ satisfies Coarse Permanence (by \cref{thm:coarseinv-general}), $\mcX_1\cup\mcX_2\in\fC$.
\end{proof}

\begin{defi}\label{def:finunion}
	A collection of metric families, $\fC$, satisfies \emph{Finite Union Permanence} if the following holds. For $n \in \bbN$, let $\mcX_1, \dots, \mcX_n\in\fC$ and let $\mcX$ be a metric family. If for each $X\in \mcX$ there exist $X_i\in \mcX_i$, $1\leq i \leq n$, such that $X=\bigcup_{i=1}^n X_i$, then $\mcX \in \fC$.
\end{defi}

\begin{thm}\label{thm:finunion2}
	Let $\fC$ be a collection of metric families that satisfies Fibering Permanence and contains all metric families with finite asymptotic dimension. Then $\fC$ satisfies Finite Union Permanence.
\end{thm}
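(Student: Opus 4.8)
The plan is to reduce Finite Union Permanence to the Extension Theorem~\ref{thm:extension} (more precisely, to its Corollary~\ref{cor:extension}) together with Fibering Permanence. Suppose $\mcX_1,\dots,\mcX_n \in \fC$ and $\mcX$ is a metric family such that each $X \in \mcX$ decomposes as $X = \bigcup_{i=1}^n X_i$ with $X_i \in \mcX_i$. Write $\mcX_i' = \{X_i\}_{X \in \mcX}$ for the subfamily of $\mcX$ consisting of the $i$-th pieces; by Coarse Permanence (\cref{thm:coarseinv-general}, which holds since $\fC$ satisfies Fibering Permanence and contains all bounded families), $\mcX_i' \in \fC$ because each $\mcX_i'$ is a subfamily of $\mcX_i$.

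The idea is to build a coarse map $F\colon \mcX \to \mcY$ to a family of finite asymptotic dimension whose point-inverses over bounded sets land in $\fC$, and then invoke Fibering Permanence. To do this I would first treat the base cases $n=1$ (trivial: $\mcX = \mcX_1' \in \fC$) and $n=2$, and then induct. For the heart of the argument I want to produce, for each $i$, a coarse map $F_i\colon \mcX_i' \to \mcY_i$ with $\mcY_i \in \fR_1$ (i.e. $\asdim(\mcY_i) < \infty$) such that for every bounded subfamily $\mcB_i$ of $\mcY_i$ the inverse image $F_i^{-1}(\mcB_i)$ lies in $\fC$. The simplest choice is to let $\mcY_i$ be a one-point family and $F_i$ the constant map; then $F_i^{-1}(\mcB_i) = \mcX_i' \in \fC$ and trivially $\asdim(\mcY_i) = 0$. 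Applying \cref{cor:extension} to the decomposition $X = \bigcup_{i=1}^n X_i$ and the maps $F_i$, we obtain a coarse map $F\colon \mcX \to \mcY$ with $\mcY \in \fR_1$ (so $\asdim(\mcY) < \infty$, hence $\mcY \in \fC$ by hypothesis) such that every bounded subfamily $\mcB$ of $\mcY$ satisfies: $F^{-1}(\mcB)$ coarsely embeds into $\{\bigcup_{i=0}^n C_i \mid C_i \in F_i^{-1}(\mcB_i)\}$ for suitable bounded subfamilies $\mcB_i$ of $\mcY_i$ (indices shifted to match \cref{cor:extension}, where the decomposition runs $0$ to $n$).

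Now $F_i^{-1}(\mcB_i) = \mcX_i' \in \fC$ for each $i$, so the family $\{\bigcup_i C_i \mid C_i \in F_i^{-1}(\mcB_i)\}$ is again a finite-union family built from members of $\fC$ — but over index set $\{0,\dots,n\}$, which a priori has one more index than we started with. This is where I expect the main subtlety to lie: one must set up the bookkeeping so that the induction actually decreases (or so that \cref{cor:extension} is applied in a way that does not inflate the number of pieces), and one must check that ``coarsely embeds into'' combined with Coarse Permanence lets us conclude $F^{-1}(\mcB) \in \fC$. The clean way around it is to observe that in our application the pieces $F_i^{-1}(\mcB_i) = \mcX_i'$ do not depend on $\mcB_i$ at all, so the target family $\{\bigcup_i C_i\}$ is literally a finite union of the fixed families $\mcX_i' \in \fC$ over $n$ (not $n+1$) indices when one discards the empty/degenerate $0$-th piece coming from the cone construction; alternatively one proceeds by induction on $n$, using the $n=2$ case of \cref{cor:extension} to peel off one piece at a time, at each stage feeding the already-established Finite Union Permanence for fewer pieces into the hypothesis that $F_i^{-1}(\mcB_i) \in \fC$. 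Either way, once $F^{-1}(\mcB) \in \fC$ for all bounded $\mcB \subset \mcY$ and $\mcY \in \fC$, Fibering Permanence gives $\mcX \in \fC$, completing the proof. The only genuinely nontrivial input is \cref{cor:extension} (ultimately the Extension Theorem and the Assouad–Nagata dimension estimate for cones); everything else is a diagram-chase with coarse embeddings and an induction on $n$.
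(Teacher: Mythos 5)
There is a genuine gap: your argument is circular at its key step. With your choice of $\mcY_i$ a one-point family and $F_i$ the constant map, $F_i^{-1}(\mcB_i)$ is just $\mcX_i'$, so the family $\{\bigcup_i C_i \mid C_i\in F_i^{-1}(\mcB_i)\}$ into which $F^{-1}(\mcB)$ coarsely embeds is (up to coarse equivalence, and taking the $C_i$ inside a common ambient $X$) essentially $\mcX$ itself --- more precisely, it is exactly a ``finite union'' family built from the $\mcX_i'\in\fC$. To conclude that this family lies in $\fC$ you need precisely Finite Union Permanence, which is the statement being proved. Finite Amalgamation Permanence (\cref{thm:amalgamation-general}) does not help here: it concerns the union of families as sets of spaces, not unions of subspaces inside a single space. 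Nor does your proposed induction on $n$ repair this: the problem is already present at the base case $n=2$ (the case $n=1$ being vacuous), where ``Finite Union Permanence for fewer pieces'' gives you nothing. Your other suggested fix --- discarding a ``degenerate $0$-th piece'' --- misreads the index bookkeeping in \cref{cor:extension} and in any case does not remove the circularity.

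The reason the Extension Theorem route cannot work for an abstract $\fC$ is that it has no descending structure to induct on. In the paper, \cref{cor:extension} is used only for the level-wise statement \cref{thm:finunion2alpha} about $\fR_\alpha$: there the maps $F_i\colon\mcX_i\to\mcY_i$ are the nontrivial maps witnessing $\mcX_i\in\fR_\alpha$, so the preimages $F_i^{-1}(\mcB_i)$ lie in $\fR_{\beta_i}$ with $\beta_i<\alpha$, and one applies the inductive hypothesis (Finite Union Permanence at the strictly smaller ordinal) to the union family; the transfinite induction on $\alpha$ is what breaks the circle. For the general \cref{thm:finunion2} the paper instead argues elementarily: for $n=2$ define $f_X(x)=d_X(x,X_2)-d_X(x,X_1)$, a $2$-Lipschitz map to $\bbR$, so $F=\{f_X\}\colon\mcX\to\{\bbR\}$ is coarse with target of asymptotic dimension $1$ (hence in $\fC$); a bounded subfamily of $\{\bbR\}$ pulls back into $\{B_D(X_1)\cup\emptyset, B_D(X_2)\}$-type neighborhoods, which form a family coarsely equivalent to the amalgamation $\mcX_1\cup\mcX_2\in\fC$ by \cref{thm:amalgamation-general}; Coarse Permanence (\cref{thm:coarseinv-general}) and Fibering Permanence then give $\mcX\in\fC$. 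You should replace the Extension Theorem machinery by an argument of this kind, or supply some genuinely new input that certifies the union family lies in $\fC$ without invoking the theorem you are proving.
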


\begin{proof}
	It suffices to prove the case $n=2$. Let $\mcX, \mcX_1, \mcX_2$ be metric families, where $\mcX_1, \mcX_2 \in \fC$. Assume that for each $X\in\mcX$ there exist $X_1\in \mcX_1$ and $X_2\in \mcX_2$ such that $X=X_1\cup X_2$. Define a map $f_X: X \to \bbR$ by $f_X(x)= d_X(x, X_2)-d_X(x,X_1)$, where $d_X$ is the metric on $X$. Notice that $f_X$ is a 2-Lipschitz map. Therefore, $F=\{f_X\}: \mcX \to \{\bbR\}$ is a coarse map to a metric family with asymptotic dimension 1. Let $\mcU$ be a bounded subfamily of $\{\bbR\}$ (i.e., a collection of uniformly bounded subsets of $\bbR$). If we show that $F^{-1}(\mcU) \in \fC$, then it will follow from Fibering Permanence that $\mcX\in\fC$.
	
	Let $D=\sup\{\diam(U) : U\in\mcU \}$. Then $\mcU$ is a subfamily of the family $\mcV = \big\{ (-\infty, D], [-D, \infty) \big\}$. Notice that $f_X^{-1}\big((-\infty, D]\big) = B_D(X_2)$ and $f_X^{-1}\big([-D, \infty)\big) = B_D(X_1)$, where $B_D(X_i)$ denotes the $D$-neighborhood of $X_i$. Thus, $F^{-1}(\mcV)$ is coarsely equivalent to the amalgamation $\mcX_1 \cup \mcX_2$, which lies in $\fC$ by Finite Amalgamation Permanence (\cref{thm:amalgamation-general}). Since $F^{-1}(\mcU)$ is a subfamily of $F^{-1}(\mcV)$, it also lies in $\fC$ by Coarse Permanence (\cref{thm:coarseinv-general}). This completes the proof.
\end{proof}

\begin{defi}\label{def:union}
	A collection of metric families, $\fC$, satisfies \emph{Union Permanence}\footnote{Note that in \cite{guentner} a collection is said to satisfy Union Permanence if it satisfies the above definition of Union Permanence, as well as Finite Union Permanence.} if the following holds. Let $\mcX=\{X_i\}_{i\in I}$ be a metric family in which each $X_i\in \mcX$ is expressed as a union of metric subspaces $X_i=\bigcup_{i\in J_i}X_{ij}$. If $\{X_{ij}\mid i\in I, j\in J_i\} \in \fC$ and for each $r>0$ there exist subspaces $Y_i(r)\subseteq X_i$ such that $\{Y_i(r)\}_{i\in I} \in \fC$ and $\{Z_{ij}(r)=X_{ij}\setminus Y_i(r)~|~j\in J_i\}$ is an $r$-disjoint collection for each $i\in I$, then $\mcX\in\fC$.
\end{defi}

\begin{thm}\label{thm:union}
	Let $\fC$ be a collection of metric families that satisfies Fibering Permanence and contains all metric spaces with finite asymptotic dimension. Then $\fC$ satisfies Union Permanence.
\end{thm}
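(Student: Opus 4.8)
The plan is to mimic the structure of the proof of Finite Union Permanence (\cref{thm:finunion2}): build a coarse map from $\mcX$ to a metric family of finite asymptotic dimension so that the bounded preimages are (up to coarse equivalence) families already known to lie in $\fC$, and then invoke Fibering Permanence. The natural target here is $\{[0,\infty)\}$ (or $\{\bbR\}$), which has asymptotic dimension $1$, using the distance-to-a-sweep-out-subspace function. Concretely, I would first reformulate the hypothesis: for each $r > 0$ we are given $Y_i(r) \subseteq X_i$ with $\{Y_i(r)\}_{i \in I} \in \fC$ and such that the pieces $Z_{ij}(r) = X_{ij}\setminus Y_i(r)$ are mutually $r$-disjoint inside $X_i$. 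The idea is to choose an increasing sequence $r_1 < r_2 < \cdots$ tending to infinity (for instance $r_k = k$) and define, for each $X = X_i$, the function $g_{X}\colon X \to [0,\infty)$ by $g_X(x) = \sum_{k \ge 1} \min\{d_X(x, Y_i(r_k)), 1\}$, or some similar ``telescoping'' sum; one has to be a little careful to make this finite and Lipschitz, so in practice I would instead build $g_X$ inductively on dyadic scales so that $g_X$ is $1$-Lipschitz, $g_X$ is bounded on each $Y_i(r_k)$-neighborhood, and $g_X$ separates the pieces $Z_{ij}(r)$ at scale comparable to the level set it lands in. Then $G = \{g_X\}\colon \mcX \to \{[0,\infty)\}$ is coarse.

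Next I would analyze $G^{-1}(\mcU)$ for a bounded subfamily $\mcU$ of $\{[0,\infty)\}$. Writing $D = \sup_{U \in \mcU}\diam(U)$, every $U$ is contained in an interval of length $D$, and $\mcU$ is refined by the family of intervals $\{[mD/2, (m+2)D/2] \mid m \ge 0\}$ together with the half-line $[0, D]$ near the origin. The key point, which the construction of $g_X$ must guarantee, is that for a window $[a, a+D]$ high up in $[0,\infty)$, the preimage $g_X^{-1}([a,a+D])$ lies in the region where $x$ is at controlled distance from $Y_i(r_k)$ for the $k$ corresponding to that height; so on the one hand $g_X^{-1}([a,a+D])$ is, up to a bounded neighborhood, contained in $B_C(Y_i(r_k))$, hence coarsely a subfamily of $\{Y_i(r_k)\}_{i} \in \fC$; on the other hand, on the complementary region the pieces $Z_{ij}(r_k)$ are $r_k$-disjoint, and since $r_k \to \infty$ the intersections $g_X^{-1}([a,a+D]) \cap Z_{ij}(r_k)$ are at mutually large distance. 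Thus $G^{-1}(\mcU)$ decomposes, at each window, as a bounded neighborhood of something in $\{Y_i(r_k)\}_i$ together with an $r_k$-disjoint union of pieces of $\{X_{ij}\} \in \fC$. Since $\{X_{ij}\} \in \fC$ and $\fC$ contains all finite-asymptotic-dimension families, an $r$-disjoint union of subspaces of a $\fC$-family is again in $\fC$ (this follows from Finite Union/Amalgamation Permanence, \cref{thm:finunion2} and \cref{thm:amalgamation-general}, applied to a disjoint-union coarsening, which has asymptotic dimension $0$ relative to $\{X_{ij}\}$ — more precisely one uses that disjoint unions of members of a $\fC$-family, spread out by $r$, are still in $\fC$); combined with $\{Y_i(r_k)\}_i \in \fC$ and Coarse Permanence (\cref{thm:coarseinv-general}) plus Finite Union Permanence again, $G^{-1}(\mcU) \in \fC$. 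Then Fibering Permanence gives $\mcX \in \fC$.

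I expect the main obstacle to be the construction and bookkeeping of the function $g_X$: it must simultaneously be globally $1$-Lipschitz (or at least asymptotically Lipschitz with uniform constants over all $X \in \mcX$), be bounded on each set $Y_i(r_k)$ plus a fixed-radius neighborhood so that the ``$Y$-part'' of each window preimage is coarsely in $\{Y_i(r_k)\}_i$, and have the property that its level sets of width $D$ land in a region where only one scale $r_k = r_k(a,D)$ is active with $r_k$ large compared to $D$, so that the remaining pieces $Z_{ij}$ intersected with the window are genuinely spread out. Managing the interplay between the height in $[0,\infty)$, the scale $r_k$, and the window width $D$ — and doing so uniformly across the whole family $\mcX$ — is the delicate part; the rest is an application of the permanence properties already established in this section, in the same spirit as the proof of \cref{thm:finunion2}.
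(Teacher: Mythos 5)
The reduction to a single height function $G\colon\mcX\to\{[0,\infty)\}$ does not work as stated, because the target forgets which piece $X_{ij}$ a point belongs to, and this is exactly where your argument breaks. For a bounded window high up, $g_X^{-1}([a,a+D])$ is a union over the (possibly infinite) index set $J_i$ of pieces of the $X_{ij}$'s that are merely $r_k$-disjoint for the one scale $r_k$ active at that height, and your claim that such a union lies in $\fC$ is unsupported: Finite Union and Finite Amalgamation Permanence (\cref{thm:finunion2}, \cref{thm:amalgamation-general}) only handle unions of a uniformly \emph{finite} number of pieces, and Limit Permanence (\cref{def:limit}) requires each member to admit $R$-disjoint decompositions for \emph{every} $R$, whereas each window-preimage carries only its single fixed disjointness scale. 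Indeed, the fixed-$r$ statement you invoke ("an $r$-disjoint union of subspaces of a $\fC$-family is again in $\fC$") is false: take any $1$-separated family not in $\fC$ and multiply its metrics by $r+1$; the result is coarsely equivalent to the original family (so not in $\fC$, by Coarse Permanence), yet it is an $r$-disjoint union of singletons, and singletons form a bounded family in $\fC$. The "growing $r_k$" variant you would actually need is essentially as strong as Union Permanence itself, so it cannot simply be cited. A second, related problem is that the $Y$-parts of your window-preimages involve the sets $Y_i(r_k)$ at infinitely many scales $k$ simultaneously, so collecting them requires an infinite amalgamation, again not covered by the permanence properties established so far.

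The paper closes precisely this gap by changing the target so that it remembers the index $j$: for each $i$ it takes the rooted tree $T_i$ obtained by wedging rays $L_{ij}$, $j\in J_i$, at a root $p_i$, and, after replacing $Y_i(n)$ by the nested sets $Y'_i(n)=B_{n-1}(Y'_i(n-1))\cup Y_i(n)$, maps the $n$-th shell of $X_{ij}$ to the point $n$ on the ray $L_{ij}$ (well defined and coarse by the $n$-disjointness of the sets $X_{ij}\setminus Y'_i(n)$). The family $\{T_i\}$ has asymptotic dimension at most $1$, and a bounded subfamily with diameters less than $m$ is a subfamily of $\{B_{2m}(p_i)\}\cup\{L_{ij}\setminus[0,m]\}$; its preimage is then a subfamily of $\{Y'_i(2m+1)\}_{i\in I}\cup\{X_{ij}\}$, i.e.\ only \emph{two} $\fC$-families and a \emph{single} $Y$-scale are needed, so Finite Amalgamation Permanence, Coarse Permanence (\cref{thm:coarseinv-general}) and Fibering Permanence finish the proof. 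So your overall strategy (fiber over a $1$-dimensional target and quote the already-established permanence properties) is the right one, but without making the target record $j$ — or otherwise proving the disjoint-union lemma you assume — the argument does not close.
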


\begin{proof}
	Note that, by the previous theorems, $\fC$ satisfies Coarse Permanence, Finite Amalgamation Permanence, and Finite Union Permanence.
	
	Let $\mcX=\{X_i\}_{i\in I}$ be a metric family in which each $X_i\in \mcX$ is expressed as a union of metric subspaces $X_i=\bigcup_{i\in J_i}X_{ij}$. Suppose that $\{X_{ij}\mid i\in I, j\in J_i\} \in \fC$ and for each $r>0$ there exist subspaces $Y_i(r)\subseteq X_i$ such that $\{Y_i(r)\}_{i\in I} \in \fC$ and $\{Z_{ij}(r)=X_{ij}\setminus Y_i(r)~|~j\in J_i\}$ is an $r$-disjoint collection for each $i\in I$. Let $Y'_i(1)=Y_i(1)$ and for $n\geq 2$ define $Y'_i(n) \colonequals B_{n-1}(Y'_i(n-1))\cup Y_i(n)$. (If $\bigcup_{r\in\bbN}Y_i(r)$ is empty, then choose $Y_i'(1)$ to be a single point.) Notice that for every $i\in I$ and every $n\in \bbN$, $Y'_i(n) \subseteq Y'_i(n+1)$ and the collection $\{Z'_{ij}(n) \colonequals X_{ij}\setminus Y'_i(n)~|~j\in J_i\}$ is $n$-disjoint. Also, for each $i\in I$, $\bigcup_{n=1}^{\infty} Y_i'(n) = X_i$. Since $\{Y_i(n)\}_{i\in I} \in \fC$ for each $n\in \bbN$, Coarse Permanence and Finite Union Permanence imply that $ \{Y_i'(n)\}_{i\in I}=\{B_{n-1}(Y_i'(n-1))\cup Y_i(n)\}_{i\in I}\in\fC$ for each $n\in \bbN$.

For each $i\in I$ and $j\in J_i$, let $L_{ij}$ denote the ray $[0,\infty)$ considered as a graph with vertex set $\bbN \cup \{0\}$. For each $i\in I$, let $T_i$ be the rooted tree obtained from $\bigsqcup_{j\in J_i}L_{ij}$ by identifying the set $\{0\in L_{ij} \mid j\in J_i\}$ to one point $p_i$, the root of $T_i$. For every $i$, define a map $f_i\colon X_i\to T_i$ inductively as follows. Map all of $Y'_i(1)$ to the root $p_i \in T_i$. Now assume the map is defined for $Y_i'(n)$ and let $y\in Y_i'(n+1)\setminus Y_i'(n)$ be given. Define $f_i(y)=n\in L_{ij}\subset T_i$, where $y\in X_{ij}$. This is well-defined because if $y\in X_{ij} \cap X_{ik}$ for distinct $j,k\in J_i$, then $y\in Z'_{ij}(n) \cap Z'_{ik}(n)$, which is impossible since $Z'_{ij}(n)$ and $Z'_{ik}(n)$ are disjoint.

We want to show that $F=\{f_i\colon X_i\to T_i\}_{i\in I}$ is a coarse map. Let $x,y\in X_i$ be given. Then there exist $n,m\in \bbN$ with $n-1\leq d_{X_i}(x,y)\leq n$ and $f_i(x)=m\in L_{ij}$. We can assume without loss of generality that $m\leq f_i(y)$. Then $x\in X_{ij} \cap Y_i'(m+1)\setminus Y_i'(m)$ and $y\notin Y_i'(m)$. We have $y\in B_n(Y'_i(m+1))\subseteq B_{N}(Y_i'(N))\subseteq Y_i'(N+1)$, where $N=\max\{n,m+1\}$. If $N=m+1$, then $y\in X_{ij}$ by the $N$-disjointness of $\{Z'_{ij}(N)~|~j\in J_i\}$, and so $f_i(y)\in [m,N]=[m,m+1]\subseteq L_{ij}$; that is, $d_{T_i}(f_i(x),f_i(y))\leq 1$. If $n>m+1$, then $f_i(y)\in [m,N]=[m,n]\subseteq L_{ik}$ for some $k\in J_i$, and $d(f_i(x),f_i(y))\leq n+m\leq 2n \leq 2d_{X_i}(x,y)+2$. Thus, $F\colon \mcX\to\{T_i\}_{i\in I}$ is a coarse map.

Since each $T_i$ is a tree, \cref{lem:productof0hyp} implies that the family $\{T_i\}_{i\in I}$ has asymptotic dimension at most one.
Let $\mcU$ be a bounded subfamily of $\{T_i\}_{i\in I}$ and let $m$ be an integer greater than $\sup\{\diam(U) \mid U\in\mcU \}$. 
Then $\mcU$ is a subfamily of
\[
\mcV =\{B_{2m}(p_i)\subseteq T_i~|~i\in I\}\cup \{L_{ij}\setminus [0,m]~|~i\in I, j\in J_i\}.
\]
We have that $F^{-1}(\mcV)$ is a subfamily of $\{Y'_i(2m+1)~|~i\in I\}\cup\{X_{ij}\mid i\in I, j\in J_i\}$. Since $\{X_{ij}\mid i\in I, j\in J_i\}$ and $\{Y'_i(2m+1)~|~i\in I\}$ lie in $\fC$, Finite Amalgamation Permanence implies that $F^{-1}(\mcV)\in\fC$. By Coarse Permanence, the subfamily $F^{-1}(\mcU)$ also lies in $\fC$. Thus, Fibering Permanence implies that $\mcX\in\fC$.
\end{proof}

The next permanence property we establish is Limit Permanence, as defined by Guentner \cite{guentner}.

\begin{defi}\label{def:limit}
	A collection of metric families, $\fC$, satisfies \emph{Limit Permanence} if the following holds. Let $\mcX=\{X_i\}_{i\in I}$ be a metric family. If for every $R>0$ there exists an indexing set $J_R$ and, for each $i\in I$, an $R$-disjoint decomposition
\[X_i=\bigsqcup_{R\text{-disjoint}}\{Y_{ij}~|~j\in J_R\}\]
such that $\{Y_{ij}\mid i\in I, j\in J_R\} \in \fC$, then $\mcX \in \fC$.
\end{defi}

\begin{thm}\label{thm:limit_perm}
	Let $\fC$ be a collection of metric families that satisfies Fibering Permanence and contains all metric families with finite asymptotic dimension. Then $\fC$ satisfies Limit Permanence.
\end{thm}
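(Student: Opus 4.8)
The plan is to mimic the structure of the proofs of Finite Union Permanence and Union Permanence: build a coarse map from $\mcX$ to an auxiliary metric family of finite asymptotic dimension whose bounded subfamilies pull back into $\fC$, and then invoke Fibering Permanence. The natural target here is a family of nerve-type complexes or, more simply, a family built from the indexing data of the $R$-disjoint decompositions for a cofinal sequence of radii. Concretely, I would fix the sequence $R = n \in \bbN$ and, for each $n$, the given $n$-disjoint decomposition $X_i = \bigsqcup_{n\text{-disjoint}}\{Y_{ij} \mid j \in J_n\}$. As in the proof of \cref{thm:union}, I would first pass to nested pieces: for each $i$ define $Y_{ij}'(n)$ so that the decomposition pieces for radius $n$ refine those for radius $n+1$ up to a controlled neighborhood, using that $\{Y_{ij} \mid i,j\} \in \fC$ together with Coarse Permanence and Finite Amalgamation Permanence (both available by \cref{thm:coarseinv-general} and \cref{thm:amalgamation-general}) to keep each $\{\text{pieces at stage } n\}$ in $\fC$.

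Next I would construct, for each $i \in I$, a tree (or a rooted $\IR$-tree / simplicial tree) $T_i$ encoding how a point of $X_i$ sits in the nested decompositions across scales $n = 1, 2, \dots$, and a map $f_i \colon X_i \to T_i$ recording the piece containing a given point at the coarsest scale at which it is already ``captured''. The key metric estimate is that $f_i$ is coarse with control independent of $i$: if $d_{X_i}(x,y) \leq n$, then $x$ and $y$ lie in pieces that have merged by scale roughly $n$, so $f_i(x)$ and $f_i(y)$ lie within bounded tree-distance of each other. Since each $T_i$ is a tree, \cref{lem:productof0hyp} gives $\asdim\{T_i\}_{i \in I} \leq 1$, so $F = \{f_i\}\colon \mcX \to \{T_i\}_{i \in I}$ is a coarse map to a family of finite asymptotic dimension.

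Finally, I would analyze $F^{-1}(\mcU)$ for a bounded subfamily $\mcU$ of $\{T_i\}_{i \in I}$. Choosing $m$ larger than the common diameter bound, $\mcU$ is contained in a family consisting of a bounded neighborhood of the roots together with the ``deep'' parts of the rays; the preimages of these are, up to coarse equivalence, a nested piece $\{Y_i'(cm)\}_{i \in I}$ (which lies in $\fC$ by the first step) amalgamated with $\{Y_{ij} \mid i \in I, j\}$ (which lies in $\fC$ by hypothesis). Finite Amalgamation Permanence then gives $F^{-1}(\mcV) \in \fC$ for the covering family $\mcV$, and Coarse Permanence passes this to $F^{-1}(\mcU)$; Fibering Permanence then yields $\mcX \in \fC$. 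I expect the main obstacle to be the bookkeeping in the first step — arranging the decompositions at successive integer scales to be genuinely nested (not merely coarsely related) while certifying at each stage that the relevant collection of pieces remains in $\fC$ — since the hypothesis only provides an independent $R$-disjoint decomposition for each $R$, with no a priori compatibility between scales. The tree construction and the coarseness estimate for $F$ should then follow the template of \cref{thm:union} closely.
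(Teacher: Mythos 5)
Your plan has a genuine gap, and it is exactly the one you flag at the end: the hypothesis of Limit Permanence provides, for each $R$, an $R$-disjoint decomposition with its own index set $J_R$ and no compatibility whatsoever between scales, and your argument never explains how to produce the nested structure on which everything else depends. The template of \cref{thm:union} does not transfer: there the pieces $X_{ij}$ are \emph{fixed} independently of $r$ and only the sets $Y_i(r)$ vary, which is what makes the rays $L_{ij}$ and the quantity ``the scale at which a point is captured'' meaningful. In the Limit Permanence setting every point lies in some piece at every scale and the pieces themselves change with the scale, so your map $f_i$ (``the piece containing a given point at the coarsest scale at which it is already captured'') is not well defined, and neither Coarse Permanence nor Finite Amalgamation Permanence can manufacture the missing compatibility between the decompositions at different $R$. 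The missing idea is to pass to a canonical hierarchy: for each $R$, the $R$-connected components of $X_i$ (points joined by chains of jumps $\leq R$) are automatically nested as $R$ grows, and each $R$-component is contained in a single piece $Y_{ij}$, $j\in J_R$, by $R$-disjointness. With that observation a tree-type argument along your lines could probably be completed, but as written the construction does not get off the ground.

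The paper's proof packages this hierarchy much more efficiently and avoids trees altogether: replace each $X_i$ by the same underlying set $X_i'$ with the ultrametric $d_i'(x,y)=$ the smallest $r\geq 1$ such that $x$ and $y$ are $r$-connected. Since $d_i'\leq\max\{d_i,1\}$, the identity maps form a coarse map $F\colon\mcX\to\mcX'$; since $r$-balls in an ultrametric space either coincide or are disjoint, $\asdim(\mcX')=0$, so $\mcX'\in\fC$. For each $R$, every $R$-ball in $X_i'$ lies in a single $Y_{ij}$ with $j\in J_R$, so $F^{-1}(\mcB_R)$ is a subfamily of $\{Y_{ij}\mid i\in I,\ j\in J_R\}\in\fC$ and hence lies in $\fC$ by Coarse Permanence (\cref{thm:coarseinv-general}); Fibering Permanence then gives $\mcX\in\fC$. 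Note that only a zero-dimensional target and Coarse Permanence are needed; no nested exhaustions, trees, or Finite Amalgamation Permanence enter the argument.
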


\begin{proof}
	 Let $\mcX=\{X_i\}_{i\in I}$ be a metric family. Suppose that for every $R>0$ there exist an indexing set $J_R$ and $R$-disjoint decompositions
	 \[X_i=\bigsqcup_{R\text{-disjoint}}\{Y_{ij}~|~j\in J_R\},\]
	 such that the family $\{Y_{ij}\mid i\in I, j\in J_R\}$ lies in $\fC$.
	
	For each $i\in I$, let $X_i'$ be the space with the same underlying set as $X_i$ equipped with the metric $d'_i$ defined as follows.  For $x,y\in X_i$,  $d'_i(x,y)=0$ if $x=y$,  and if $x \neq y$, then
	\[d'_i(x,y)=\inf\{\max\{1,d_i(x_m,x_{m+1})\mid m\in \bbN\}\mid x_m\in X_i: \exists n\in\bbN, x_0=x, x_n=y\}.\]
	This metric can be viewed as follows. We call two points $r$-connected if one can be reached from the other by any number of jumps of length at most $r$. In the above metric the distance between two points is the smallest $r$ such that the two points are $r$-connected, with the exception that two different points have distance at least one. The latter is only necessary to obtain a metric instead of a pseudo-metric.
	
	The metric $d'_i$ is an ultrametric; that is, it satisfies $d_i'(x,y)\leq\max\{d_i(x,z),d_i(z,y)\}$ for all $x,y,z\in X_i$. This ultrametric is coarsely equivalent to the pseudo-ultrametric constructed from a metric in \cite[Lemma 8]{lemin}.
Note that $d_i'(x,y)\leq \max\{d_i(x,y),1\}$. Thus, $F=\{\id_i\colon X_i\to X_i'\}_{i\in I}$, the map of metric families consisting of the identity functions on the underlying sets, is coarse. Furthermore, since for every $r>0$, two $r$-balls in $X_i'$ will either coincide or be disjoint, it is straightforward to show using \cref{prop:Lebesgue} that $\asdim(\mcX')=0$. Let $\mcB_r=\{B_r(x)\mid i\in I, x\in X'_i\}$ be the subfamily of all $r$-balls in $\mcX'$. Then, for each $R>0$, the family $F^{-1}(\mcB_R)$ is a subfamily of $\{Y_{ij}\mid i\in I, j\in J_R\}$, and so $F^{-1}(\mcB_R)\in \fC$ by \cref{thm:coarseinv-general}. Hence, by Fibering Permanence, $\mcX\in\fC$.
\end{proof}

We now have the following corollary.

\begin{cor}\label{cor:rFDC_permanence}
	Regular FDC satisfies Finite Amalgamation Permanence, Finite Union Permanence, Union Permanence, and Limit Permanence.
\end{cor}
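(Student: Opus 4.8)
The plan is to deduce Corollary~\ref{cor:rFDC_permanence} as an immediate consequence of the general permanence theorems proved earlier in this section, together with the structural results for $\fR$ from Section~\ref{sec:prelim}. The key observation is that regular FDC, i.e. membership in $\fR$, satisfies the two hypotheses required by Theorems~\ref{thm:finunion2}, \ref{thm:union}, and \ref{thm:limit_perm}: namely, $\fR$ satisfies Fibering Permanence (this is \cref{thm:fibering}), and $\fR$ contains all metric families with finite asymptotic dimension (this is noted right after \cref{def:r_class}, since $\fR_1$ is exactly the collection of metric families with finite asymptotic dimension, and $\fR_1 \subseteq \fR$).

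First I would invoke \cref{thm:fibering} to record that the collection $\fC = \fR$ satisfies Fibering Permanence. Next I would recall that $\fR$ contains all metric families of finite asymptotic dimension, hence in particular all bounded metric families, so $\fR$ meets the hypotheses of all the general theorems of this section. Then Finite Amalgamation Permanence for $\fR$ follows from \cref{thm:amalgamation-general}; Finite Union Permanence follows from \cref{thm:finunion2}; Union Permanence follows from \cref{thm:union}; and Limit Permanence follows from \cref{thm:limit_perm}. Each of these is a direct application with no additional argument needed.

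There is essentially no obstacle here — the corollary is bookkeeping. The only subtlety worth a sentence is the one already flagged in the remark following \cref{thm:coarseinv-general}: one cannot circularly use \cref{thm:coarseinv-general} to get Coarse Permanence for $\fR$ because the proof of \cref{thm:fibering} relies on \cref{prop:ordinals}, which in turn uses \cref{thm:coarseinv}. But Coarse Permanence for $\fR$ is already established independently as \cref{thm:coarseinv}, so no issue arises, and in any case Coarse Permanence is not among the four properties asserted in this corollary. A clean writeup simply strings together the citations.

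\begin{proof}
By \cref{thm:fibering}, regular FDC satisfies Fibering Permanence, and by the discussion following \cref{def:r_class} the collection $\fR$ contains $\fR_1$, which is precisely the collection of metric families with finite asymptotic dimension. Thus $\fR$ satisfies the hypotheses of Theorems~\ref{thm:amalgamation-general}, \ref{thm:finunion2}, \ref{thm:union}, and \ref{thm:limit_perm} (note that containing all families with finite asymptotic dimension implies containing all bounded metric families). Applying these theorems with $\fC = \fR$ shows that regular FDC satisfies Finite Amalgamation Permanence, Finite Union Permanence, Union Permanence, and Limit Permanence, respectively.
\end{proof}
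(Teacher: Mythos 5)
Your proposal is correct and matches the paper's intended argument exactly: the corollary is stated immediately after Theorems~\ref{thm:amalgamation-general}, \ref{thm:finunion2}, \ref{thm:union} and \ref{thm:limit_perm} precisely so that it follows by applying them with $\fC=\fR$, using \cref{thm:fibering} and the fact that $\fR\supseteq\fR_1$ contains all families of finite asymptotic dimension. Your remark on avoiding circularity with \cref{thm:coarseinv-general} is apt but, as you note, irrelevant to the four properties claimed here.
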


In \cref{thm:coarseinv} we showed that $\fR_\alpha$ satisfies Coarse Permanence for every ordinal $\alpha$. We now show that $\fR_\alpha$ also satisfies Finite Amalgamation Permanence and Finite Union Permanence. The fact that $\fR_\alpha$ satisfies Finite Union Permanence will play an important role in the proof that regular FDC satisfies Finite Quotient Permanence (\cref{thm:quotients1}).

\begin{thm}\label{thm:finunion1}
	The collection $\fR_\alpha$ satisfies Finite Amalgamation Permanence for every ordinal $\alpha$.
\end{thm}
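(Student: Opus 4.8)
The plan is to reduce to the binary case and then imitate the proof of \cref{thm:amalgamation-general}, with the appeal to Fibering Permanence there --- which $\fR_\alpha$ does not satisfy --- replaced by a transfinite induction on $\alpha$. Since $\mcX_1 \cup \cdots \cup \mcX_n = (\mcX_1 \cup \cdots \cup \mcX_{n-1}) \cup \mcX_n$, an induction on $n$ reduces everything to showing that $\mcX_1, \mcX_2 \in \fR_\alpha$ implies $\mcX_1 \cup \mcX_2 \in \fR_\alpha$. For this I would first establish that $\fR_\alpha$ is closed under the $\ell^1$-product $\mcX_1 \times \mcX_2 = \{X_1 \times X_2 \mid X_i \in \mcX_i\}$, for every ordinal $\alpha$. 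Granting this, fix $Z \in \mcX_1$, $z \in Z$, $Y \in \mcX_2$, $y \in Y$ (the case of an empty family being trivial) and observe that $X_1 \ni x_1 \mapsto (x_1,y) \in X_1 \times Y$ together with $X_2 \ni x_2 \mapsto (z,x_2) \in Z \times X_2$ defines a map $\mcX_1 \cup \mcX_2 \to \mcX_1 \times \mcX_2$ which is isometric onto its image in the $\ell^1$-metric, hence a coarse embedding; since $\fR_\alpha$ satisfies Coarse Permanence (\cref{thm:coarseinv}), $\mcX_1 \cup \mcX_2 \in \fR_\alpha$ follows.

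To prove closure under products I would induct on $\alpha$. The case $\alpha = 0$ is immediate, since $\diam(X_1 \times X_2) = \diam(X_1) + \diam(X_2)$ in the $\ell^1$-metric, so a product of bounded families is bounded. For the inductive step, assume the statement for all $\beta < \alpha$ and take $\mcX_1, \mcX_2 \in \fR_\alpha$, with witnessing coarse maps $G_i \colon \mcX_i \to \mcW_i$ and $\mcW_i \in \fR_1$. Then $G_1 \times G_2 \colon \mcX_1 \times \mcX_2 \to \mcW_1 \times \mcW_2$ is coarse (a control function is $\rho_1 + \rho_2$, using monotonicity of the $\rho_i$ and the $\ell^1$-metric), and $\asdim(\mcW_1 \times \mcW_2) \leq \asdim(\mcW_1) + \asdim(\mcW_2) < \infty$ by \cref{cor:asdim_product}, so $\mcW_1 \times \mcW_2 \in \fR_1$. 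Given a bounded subfamily $\mcB$ of $\mcW_1 \times \mcW_2$, I would project it onto the two factors to obtain bounded subfamilies $\mcB_i$ of $\mcW_i$ with $\mcB$ a subfamily of $\{B_1 \times B_2 \mid B_i \in \mcB_i\}$, choose $\beta_1, \beta_2 < \alpha$ with $G_i^{-1}(\mcB_i) \in \fR_{\beta_i}$, and set $\beta = \max\{\beta_1, \beta_2\} < \alpha$. After invoking the monotonicity $\fR_{\beta_i} \subseteq \fR_\beta$, the induction hypothesis gives $G_1^{-1}(\mcB_1) \times G_2^{-1}(\mcB_2) \in \fR_\beta$, and since $(G_1 \times G_2)^{-1}(\mcB)$ is a subfamily of $(G_1 \times G_2)^{-1}(\{B_1 \times B_2 \mid B_i \in \mcB_i\}) = G_1^{-1}(\mcB_1) \times G_2^{-1}(\mcB_2)$, Coarse Permanence (\cref{thm:coarseinv}) yields $(G_1 \times G_2)^{-1}(\mcB) \in \fR_\beta$. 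As $\mcB$ was arbitrary, $\mcX_1 \times \mcX_2$ regularly decomposes over $\bigcup_{\beta < \alpha} \fR_\beta$, i.e.\ $\mcX_1 \times \mcX_2 \in \fR_\alpha$, closing the induction.

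Two small points will need attention. First, the monotonicity $\fR_\beta \subseteq \fR_\gamma$ for $\beta \leq \gamma$: this should follow quickly, because a family in $\fR_\beta$ regularly decomposes over $\fR_\beta$ via the constant map to a one-point family (which has asymptotic dimension $0$), hence lies in $\fR_{\beta+1}$ and so in every later $\fR_\gamma$. Second, one must confirm the routine preimage identity $(G_1 \times G_2)^{-1}(\{B_1 \times B_2 \mid B_i \in \mcB_i\}) = G_1^{-1}(\mcB_1) \times G_2^{-1}(\mcB_2)$ so that the induction hypothesis is applicable. I expect this bookkeeping, rather than anything conceptual, to be the only mild obstacle; the remaining ingredients --- coarseness of a product of coarse maps, the asymptotic-dimension estimate for products (\cref{cor:asdim_product}), Coarse Permanence (\cref{thm:coarseinv}), and the isometric embedding of an amalgam into a product --- are either elementary or already available in the paper.
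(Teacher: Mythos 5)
Your argument is correct, but it follows a genuinely different route from the paper's. You prove, by transfinite induction, that each $\fR_\alpha$ is closed under finite $\ell^1$-products (using \cref{cor:asdim_product} for the target families, the monotonicity $\fR_\beta\subseteq\fR_\gamma$ for $\beta\leq\gamma$, and Coarse Permanence, \cref{thm:coarseinv}, to pass from the product of preimages to the preimage of an arbitrary bounded subfamily), and then you embed the amalgam $\mcX_1\cup\mcX_2$ isometrically into $\mcX_1\times\mcX_2$ via basepoints, exactly as in the proof of \cref{thm:amalgamation-general}, with the appeal to Fibering Permanence there replaced by your product lemma. The paper instead runs the transfinite induction directly on the amalgamation: given witnessing coarse maps $F_i\colon\mcX_i\to\mcY_i$ with $\mcY_i\in\fR_1$, it amalgamates the targets to $\mcY=\bigcup_i\mcY_i$, which still has finite asymptotic dimension, takes the canonical map $F\colon\mcX\to\mcY$, and observes that the preimage of a bounded subfamily of $\mcY$ is itself a finite amalgamation of preimages $F_i^{-1}(\mcB_i)\in\fR_{\beta_i}$, hence lies in $\fR_{\max_i\beta_i}$ by the induction hypothesis. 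The paper's proof is shorter and needs neither the product theorem for asymptotic dimension nor Coarse Permanence; your proof is somewhat longer but yields the additional statement, not recorded in the paper for the individual classes $\fR_\alpha$, that each $\fR_\alpha$ is closed under finite $\ell^1$-products, and it makes explicit the monotonicity of the classes $\fR_\alpha$ (via the constant map to a point), which the paper's choice of $\beta=\max_i\beta_i$ uses implicitly. Your handling of the per-subfamily dependence of the ordinals $\beta_i$ and of the preimage identity for product maps is sound, so both proofs are complete.
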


\begin{proof}
	We prove the theorem by induction on $\alpha$. Clearly, if each $\mcX_i\in \fR_0=\fB$, then also $\mcX\in \fR_0$. Now assume the statement holds for all $\beta<\alpha$. Assume that $\mcX=\bigcup_{i=1}^n\mcX_i$, where each $\mcX_i$ is in $\fR_\alpha$. Then, for each $i$ there is a $\mcY_i\in\fR_1$ and a coarse map $F_i\colon \mcX_i\to \mcY_i$ such that the inverse image of a bounded subfamily lies in $\fR_{\beta_i}$ for some $\beta_i<\alpha$. Note that the family $\mcY=\bigcup_{i=1}^n\mcY_i$ lies in $\fR_1$. The inverse image $F^{-1}(\mcB)$ of a bounded subfamily $\mcB$ of $\mcY$ under the canonical map $F\colon \mcX\to \mcY$ is a finite union of metric families lying in $\fR_\beta$, where $\beta=\max_i \beta_i < \alpha$. Therefore, $F^{-1}(\mcB)\in\fR_\beta$ by the induction hypothesis. Hence, $\mcX\in\fR_\alpha$. 
\end{proof}

\begin{thm}\label{thm:finunion2alpha}
	The collection $\fR_\alpha$ satisfies Finite Union Permanence for every ordinal $\alpha$.
\end{thm}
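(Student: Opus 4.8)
The plan is a transfinite induction on $\alpha$, the engine being the Extension Theorem in the packaged form of \cref{cor:extension}, which lets one strip the ``finite asymptotic dimension'' layer off a family in $\fR_\alpha$ and thereby reduce a finite-union question at level $\alpha$ to finite-union questions at strictly lower levels; Coarse Permanence (\cref{thm:coarseinv}) is then used to transport conclusions back along coarse embeddings. (This is the analogue, for the individual $\fR_\alpha$, of the proof of \cref{thm:finunion2}, which for a general $\fC$ could invoke Fibering Permanence directly; since $\fR_\alpha$ for a fixed $\alpha$ is not known to satisfy Fibering Permanence, \cref{cor:extension} plays that role here.) Since \cref{cor:extension} is stated for an arbitrary $n$-fold decomposition, I would prove the statement for all $n$ at once rather than reducing to $n=2$. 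I will also use the elementary fact that the collections $\fR_\beta$ are increasing, $\fR_0\subseteq\fR_1\subseteq\fR_2\subseteq\cdots$, which is immediate from the definitions. The base case is $\alpha=1$.

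For $\alpha=1$, recall $\fR_1$ is the collection of metric families of finite asymptotic dimension. If $\mcX_1,\dots,\mcX_n\in\fR_1$ and each $X\in\mcX$ can be written as $X=\bigcup_{i=1}^n X_i$ with $X_i\in\mcX_i$, then the Finite Union Theorem for asymptotic dimension applies: its standard proof (say via the covering characterization of \cref{prop:Lebesgue}) builds decompositions of the unions with constants depending only on the dimensions and the given uniform constants, so $\asdim(\mcX)\leq\max_i\asdim(\mcX_i)<\infty$ and $\mcX\in\fR_1$.

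For the inductive step, fix $\alpha\geq 2$, assume the result for every $\beta<\alpha$, and take $\mcX_1,\dots,\mcX_n\in\fR_\alpha$ and $\mcX$ as in \cref{def:finunion}. Choose, for each $X\in\mcX$, a decomposition $X=\bigcup_{i=1}^n X_i$ with $X_i\in\mcX_i$, and put $\mcX_i'=\{X_i\mid X\in\mcX\}$, a subfamily of $\mcX_i$; then $\mcX_i'\in\fR_\alpha$ by \cref{thm:coarseinv}. Unwinding the definition of $\fR_\alpha$, there are $\mcY_i\in\fR_1$ and coarse maps $F_i\colon\mcX_i'\to\mcY_i$ such that $F_i^{-1}(\mcB)$ lies in $\fR_{\beta}$ for some $\beta<\alpha$ whenever $\mcB$ is a bounded subfamily of $\mcY_i$. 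Applying \cref{cor:extension} to the decomposition $X=\bigcup_{i=1}^n X_i$ and the maps $F_i$ produces $\mcY\in\fR_1$ and a coarse map $F\colon\mcX\to\mcY$ such that every bounded subfamily $\mcB$ of $\mcY$ admits bounded subfamilies $\mcB_i$ of $\mcY_i$ for which $F^{-1}(\mcB)$ coarsely embeds into $\mcC_{\mcB}:=\{\bigcup_{i=1}^n C_i\mid C_i\in F_i^{-1}(\mcB_i)\}$.

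To conclude, fix such a $\mcB$, pick $\beta_i<\alpha$ with $F_i^{-1}(\mcB_i)\in\fR_{\beta_i}$, and set $\beta=\max\{1,\beta_1,\dots,\beta_n\}$, so $1\leq\beta<\alpha$. By monotonicity of the $\fR_\gamma$ each $F_i^{-1}(\mcB_i)$ lies in $\fR_\beta$, and $\mcC_{\mcB}$ is precisely the pointwise $n$-fold union of $F_1^{-1}(\mcB_1),\dots,F_n^{-1}(\mcB_n)$; so $\mcC_{\mcB}\in\fR_\beta$ by the induction hypothesis, whence $F^{-1}(\mcB)\in\fR_\beta$ by Coarse Permanence (\cref{thm:coarseinv}). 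As $\mcB$ ranges over all bounded subfamilies of $\mcY\in\fR_1$, this exhibits $\mcX$ as regularly decomposing over $\bigcup_{\beta<\alpha}\fR_\beta$, i.e.\ $\mcX\in\fR_\alpha$. The main obstacle is this inductive step, specifically the observation that the family $\mcC_{\mcB}$ coming out of \cref{cor:extension} is again a finite pointwise union of families of strictly smaller complexity, so that the induction hypothesis is applicable; the only slightly delicate point is keeping every ordinal in the range $1\leq\beta<\alpha$, which forces the ``$\max\{1,\dots\}$'' (and uses $\fR_0\subseteq\fR_1$). The base case is nothing more than the classical uniform Finite Union Theorem for asymptotic dimension.
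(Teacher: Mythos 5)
Your argument is essentially the paper's proof: a transfinite induction whose base case is the uniform finite-union theorem for asymptotic dimension, and whose inductive step feeds the maps $F_i$ into \cref{cor:extension} and then applies the induction hypothesis to the resulting pointwise-union family $\{\bigcup_{i=1}^n C_i\mid C_i\in F_i^{-1}(\mcB_i)\}$, transporting the conclusion back along the coarse embedding via Coarse Permanence (\cref{thm:coarseinv}). The only cosmetic differences are that you start the induction at $\alpha=1$ rather than $\alpha=0$, and that you spell out the monotonicity of the collections $\fR_\beta$, the $\max\{1,\beta_i\}$ bookkeeping, and the restriction of the $F_i$ to subfamilies, points the paper leaves implicit.
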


\begin{proof}
	Let $n \in \bbN$, and let $\mcX, \mcX_1, \dots, \mcX_n$ be metric families, where each $\mcX_i\in \fR_\alpha$. Assume that for each $X\in \mcX$ there exist $X_i\in \mcX_i$, $1\leq i \leq n$, such that $X=\bigcup_{i=1}^n X_i$. We prove that $\mcX\in\fR_\alpha$ by induction on $\alpha$. 
	
	The case $\alpha=0$ is trivial. (The case $\alpha=1$ is the fact that finite asymptotic dimension satisfies Finite Union Permanence. The proof of this fact for metric spaces, which can be found in \cite[Corollary 26]{bell-dranish-asdim}, immediately generalizes to metric families.) Suppose that the statement holds for all $\beta<\alpha$, and assume that $\mcX_i\in \fR_\alpha$ for every $i=1,\ldots,n$. Then there are coarse maps $F_i:\mcX_i\to \mcY_i$, where $\asdim\mcY_i<\infty$, such that the inverse images of bounded subfamilies $\mcB_i$ of $\mcY_i$ under $F_i$ lie in $\fR_{\beta_i}$, with $\beta_i<\alpha$. By \cref{cor:extension}, there exists a metric family $\mcY'$ with $\asdim\mcY'<\infty$ and a coarse map $F\colon\mcX\to \mcY'$  such that for every bounded subfamily $\mcB$ of $\mcY'$ there are bounded subfamilies $\mcB_i$ of $\mcY_i$ such that the inverse image $F^{-1}(\mcB)$ coarsely embeds into $\{\bigcup_{i=1}^nC_i\mid C_i\in F_i^{-1}(\mcB_i)\}$. Therefore, by the induction assumption, $F^{-1}(\mcB)$ lies in $\fR_{\beta}$, where $\beta=\max_i \beta_i < \alpha$. Hence, $\mcX\in\fR_\alpha$.
\end{proof}

Given a group $G$ together with a finite symmetric generating set $S \subset G$, the {\it length} of $g \in G$ with respect to $S$ is the non-negative integer
$| g |_S = \min\{ n ~|~ g = s_1 s_2 \cdots s_n, \, s_j \in S\}$.
The corresponding left-invariant {\it word metric} on $G$ is given by $d_S(g,h) = |g^{-1}h|_S$.
Any two such finite generating sets for $G$ yield quasi-isometric metric spaces.
More generally, a countable group $G$ admits a proper, left-invariant metric that is unique up to coarse equivalence.
Hence, asymptotic dimension, FDC, and regular FDC are well-defined for countable groups, and in the following we will assume all groups to be countable.

Since regular FDC satisfies Coarse Permanence, Fibering Permanence, Finite Union Permanence, Union Permanence, and Limit Permanence
(\cref{thm:coarseinv}, \cref{thm:fibering} and \cref{cor:rFDC_permanence}), the following permanence properties for groups now follow from \cite[Theorems 7.2.1, 7.2.3, 7.2.5 and Corollary 7.2.4]{guentner}.

\begin{cor}\label{cor:direct_union}
	If $G$ is a (countable) direct union of subgroups that each have regular FDC, then $G$ has regular FDC. In particular, a countable discrete group has regular FDC if and only if its finitely generated subgroups do.
\end{cor}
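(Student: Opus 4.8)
The plan is to deduce this from the permanence infrastructure already established. Since regular FDC satisfies Coarse Permanence, Fibering Permanence, Finite Union Permanence, Union Permanence and Limit Permanence (\cref{thm:coarseinv}, \cref{thm:fibering} and \cref{cor:rFDC_permanence}), the group-theoretic closure results of Guentner \cite[Theorems 7.2.1, 7.2.3, 7.2.5 and Corollary 7.2.4]{guentner} apply directly and yield the statement. For completeness I would also spell out the underlying argument, which needs only Limit Permanence. First reduce to the case of an increasing chain: enumerating $G=\{g_1,g_2,\dots\}$, each finitely generated subgroup $\langle g_1,\dots,g_m\rangle$ lies in one of the given subgroups by directedness of the union, and, again by directedness, we may organize these into an increasing sequence $G_1\subseteq G_2\subseteq\cdots$ of subgroups, each with regular FDC, such that $\bigcup_m G_m=G$.

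Now fix a proper, left-invariant metric $d$ on $G$. The crucial point is that properness forces bounded sets to be absorbed into the chain: for each $R>0$ the ball $B_R(e)$ is finite, hence contained in some $G_{n(R)}$. Consequently distinct left cosets of $G_{n(R)}$ are $R$-separated, because $d(g,g')\leq R$ implies $g^{-1}g'\in B_R(e)\subseteq G_{n(R)}$, so that $gG_{n(R)}=g'G_{n(R)}$. Thus $G=\bigsqcup_{R\text{-disjoint}}\{hG_{n(R)}\mid hG_{n(R)}\in G/G_{n(R)}\}$. Each coset, with the metric inherited from $G$, is isometric via a left translation to $G_{n(R)}$ equipped with $d|_{G_{n(R)}}$; since $d|_{G_{n(R)}}$ is again a proper left-invariant metric on $G_{n(R)}$, it is coarsely equivalent to the word metric, and so $G_{n(R)}$ with this metric has regular FDC. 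The map of families sending each coset isometrically onto $G_{n(R)}$ is then a coarse embedding, so the family of cosets lies in $\fR$ by Coarse Permanence (\cref{thm:coarseinv}). As $R>0$ was arbitrary, Limit Permanence for regular FDC (\cref{cor:rFDC_permanence}) yields $\{G\}\in\fR$.

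For the ``in particular'' clause, the forward implication is the case $G=\bigcup\{H\leq G\mid H\text{ finitely generated}\}$, which is a countable direct union of subgroups of regular FDC. For the converse, suppose $G$ has regular FDC and let $H\leq G$ be finitely generated. With $d$ as above, $(H,d|_H)$ is a subspace of $(G,d)$, hence $\{(H,d|_H)\}\in\fR$ by \cref{thm:coarseinv}(1); and $d|_H$ is a proper left-invariant metric on $H$, so it is coarsely equivalent to its word metric, whence $H$ has regular FDC.

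I do not anticipate a genuine obstacle once Limit Permanence is in hand; the deduction is short. The one step that warrants care is the reduction to an increasing chain, together with the observation that late terms of the chain contain $B_R(e)$ --- this is exactly where properness of the metric, and hence the restriction to countable groups, enters.
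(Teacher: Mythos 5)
Your proposal is correct and follows essentially the same route as the paper, which deduces the corollary directly from Guentner's group-level results (Theorems 7.2.1 etc.) once Coarse, Fibering, Finite Union, Union and Limit Permanence are known for regular FDC. The extra argument you spell out (absorbing balls into an increasing chain, $R$-disjointness of cosets, Coarse Permanence for the coset family, then Limit Permanence) is precisely the standard proof behind Guentner's direct-union theorem, so it adds completeness but not a different method.
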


\begin{cor}
	If $G$ acts on a locally finite space $X$ with regular FDC and there exists an $x\in X$ for which the stabilizer subgroup $G_x$ has regular FDC, then $G$ has regular FDC.
\end{cor}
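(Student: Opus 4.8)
The plan is to derive this from the permanence properties already established — specifically Fibering Permanence (\cref{thm:fibering}), together with Finite Union Permanence and Coarse Permanence (\cref{thm:coarseinv}, \cref{cor:rFDC_permanence}) — in exactly the way the general group‑action theorem of \cite{guentner} (the same source as the preceding corollaries) handles any such property. Equip $G$ with a proper left‑invariant metric $d_G$, unique up to coarse equivalence. The orbit map $\pi\colon\{G\}\to\{X\}$, $g\mapsto gx$, is coarse: since the balls of $(G,d_G)$ are finite, the function $\rho(n)\colonequals\sup\{\,d_X(x,kx)\mid d_G(e,k)\leq n\,\}$ takes finite values and serves as a control function. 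Because $\{X\}\in\fR$, Fibering Permanence reduces the claim to showing $\pi^{-1}(\mcB)\in\fR$ for every bounded subfamily $\mcB$ of $\{X\}$.

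Fix such a $\mcB$ and set $R\colonequals\sup\{\diam B\mid B\in\mcB\}$. Since $X$ is locally finite, $B_R(x)$ is finite; write $B_R(x)\cap Gx=\{g_1x,\dots,g_Nx\}$. For $B\in\mcB$ with $\pi^{-1}(B)\neq\varnothing$ pick $h_B\in\pi^{-1}(B)$; then any $g\in\pi^{-1}(B)$ satisfies $d_X(x,h_B^{-1}gx)=d_X(h_Bx,gx)\leq R$ (the action is by isometries), so $h_B^{-1}gx=g_ix$ for some $i$ and hence $g\in h_Bg_iG_x$. Thus $\pi^{-1}(B)=\bigcup_{i=1}^N C_{B,i}$ with $C_{B,i}\colonequals\pi^{-1}(B)\cap h_Bg_iG_x$. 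A left coset $h_Bg_iG_x\subseteq G$, with the metric induced from $G$, is carried isometrically onto $(G_x,d_G|_{G_x})$ by left translation, and $d_G|_{G_x}$ is a proper left‑invariant metric on $G_x$, so $\{(G_x,d_G|_{G_x})\}\in\fR$ by the hypothesis on $G_x$ and Coarse Permanence. Restricting these translations exhibits a coarse embedding of the family $\mcD_i\colonequals\{C_{B,i}\mid B\in\mcB\}$ into $\{(G_x,d_G|_{G_x})\}$, so $\mcD_i\in\fR$ by Coarse Permanence. Since every member of $\pi^{-1}(\mcB)$ is a union of $N$ sets, one from each $\mcD_i$, Finite Union Permanence yields $\pi^{-1}(\mcB)\in\fR$, completing the argument.

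I do not expect any genuine obstacle here: all the weight is carried by the permanence properties of regular FDC proved earlier, and the remaining points are purely verificational — that the orbit map is coarse (finiteness of balls in $G$), that the preimage of a bounded set is a union of boundedly many cosets of the stabilizer (local finiteness of $X$ and the isometric action), and that such a coset carries the intrinsic coarse structure of $G_x$. Indeed, the corollary follows formally from \cite{guentner} once \cref{thm:coarseinv}, \cref{thm:fibering} and \cref{cor:rFDC_permanence} are in place, and the argument above is just a self‑contained rendering of that deduction; the only mild care needed is in matching the hypotheses of \cite{guentner}.
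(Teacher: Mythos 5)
Your proposal is correct and is essentially the paper's own proof: the paper obtains this corollary by citing Guentner's general group-action theorem, which applies to any collection of metric families satisfying Coarse, Fibering and Finite Union Permanence (\cref{thm:coarseinv}, \cref{thm:fibering}, \cref{cor:rFDC_permanence}), and your orbit-map fibering argument with the coset decomposition of preimages of bounded sets is precisely that deduction written out. The only point to keep explicit is the one you already flag, namely that the hypothesis is read as in \cite{guentner}: an isometric action on a locally finite metric space.
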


\begin{cor}\label{cor:groupextension}
The class of groups with
regular FDC is closed under group extensions.
\end{cor}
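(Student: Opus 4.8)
The plan is to deduce this from Fibering Permanence for regular FDC (\cref{thm:fibering}), exactly as the general extension statement is deduced abstractly in \cite{guentner}: an extension $1\to N\to G\xrightarrow{\pi}Q\to 1$ exhibits $G$ as fibered over $Q$, with fibers coarsely equivalent to $N$. So I would take $N$ and $Q$ of regular FDC, fix a proper left-invariant metric $d$ on $G$, and aim to verify the hypotheses of \cref{def:fibering} for the coarse map $\pi$.

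The first step is metric bookkeeping: $d|_N$ is a proper left-invariant metric on $N$, and the pushforward $\bar d(q_1,q_2):=\min\{d(g_1,g_2n)\mid n\in N\}$ (for any lifts $g_i$ of $q_i$) is a proper left-invariant metric on $Q$. Since a countable group admits a proper left-invariant metric that is unique up to coarse equivalence, $\{(N,d|_N)\}$ and $\{(Q,\bar d)\}$ then lie in $\fR$ by Coarse Permanence (\cref{thm:coarseinv}), and $\pi\colon(G,d)\to(Q,\bar d)$ is $1$-Lipschitz, hence a coarse map of metric families $\{G\}\to\{(Q,\bar d)\}$.

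Next I would check that for every bounded subfamily $\mcZ$ of $\{(Q,\bar d)\}$ the inverse image $\pi^{-1}(\mcZ)$ has regular FDC. Any such $\mcZ$ is a subfamily of $\{B_r(q)\mid q\in Q\}$ for some $r>0$, so by Coarse Permanence (\cref{thm:coarseinv}) it suffices to treat $\{\pi^{-1}(B_r(q))\mid q\in Q\}$. Choosing a lift $g_q$ of each $q$, a short computation using left-invariance identifies $\pi^{-1}(B_r(q))$ with the closed $r$-neighborhood of the coset $g_qN$ in $(G,d)$. Composing a nearest-point assignment $\pi^{-1}(B_r(q))\to g_qN$ (moving points by at most $r$) with the isometry $\ell_{g_q^{-1}}\colon g_qN\to(N,d|_N)$ produces, uniformly in $q$, a coarse embedding, and collecting these over $q$ gives a coarse embedding of metric families $\{\pi^{-1}(B_r(q))\mid q\in Q\}\to\{(N,d|_N)\}$. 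Since $\{(N,d|_N)\}\in\fR$, Coarse Permanence forces $\{\pi^{-1}(B_r(q))\mid q\in Q\}\in\fR$, hence $\pi^{-1}(\mcZ)\in\fR$, and then Fibering Permanence (\cref{thm:fibering}) yields $\{G\}\in\fR$, i.e.\ $G$ has regular FDC.

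I expect no genuine obstacle here; the argument is routine once \cref{thm:fibering} is in hand. The points needing a little care are purely bookkeeping---that $\bar d$ really is a proper left-invariant metric, that $\pi^{-1}(B_r(q))$ is the $r$-neighborhood of $g_qN$, and, most importantly, that the coarse embeddings constructed above have control and properness functions independent of $q$ (which holds because the $\ell_{g_q^{-1}}$ are isometries and the nearest-point assignments move points by the fixed constant $r$). One could equally just invoke the abstract extension theorem of \cite{guentner} directly, since regular FDC has Fibering Permanence.
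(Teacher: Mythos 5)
Your argument is correct and is essentially the paper's route: the corollary is deduced from Fibering Permanence for regular FDC, the paper simply citing the general extension machinery of \cite[Theorem 7.2.3]{guentner} (which applies once Coarse, Fibering, Union and Limit Permanence are established) rather than verifying the fibering hypotheses by hand. Your explicit verification---quotient metric on $Q$, preimages of balls as $r$-neighborhoods of cosets, uniform coarse embeddings into $\{N\}$---is exactly the content of that cited result, so there is no substantive difference.
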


\begin{cor}
The class of groups with
regular FDC is closed under the formation of free products (with amalgamation).	
\end{cor}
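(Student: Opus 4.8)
The plan is to deduce this from Fibering Permanence (\cref{thm:fibering}) via the action of $G = A *_C B$ on its Bass--Serre tree, following \cite{guentner}. Throughout, countable groups are equipped with proper left-invariant metrics, which are unique up to coarse equivalence. First observe that $C \leq A$ has regular FDC by Coarse Permanence (\cref{thm:coarseinv}), and that each coset $gA$ and $gB$, with the metric induced from $G$, again carries a proper left-invariant metric, hence is coarsely equivalent to $A$, respectively $B$, and so has regular FDC.

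Let $T$ be the Bass--Serre tree of the splitting $G = A *_C B$. Since $T$ is a tree it is Gromov $0$-hyperbolic, so $\asdim(T) \leq \asdim_{\mathrm{AN}}(T) \leq 1$ by \cref{lem:productof0hyp}; thus $\{T\} \in \fR_1$. The orbit map $\phi \colon G \to T$, $g \mapsto g v_0$ for a fixed vertex $v_0$, is coarse, because for a proper left-invariant metric each closed ball of $G$ is finite, so $\rho(t) := \sup_{d_G(e,g) \leq t} d_T(v_0, g v_0)$ is a finite, non-decreasing control function. By Fibering Permanence, to conclude $\{G\} \in \fR$ it suffices to show $\phi^{-1}(\mathcal{Z}) \in \fR$ for every bounded subfamily $\mathcal{Z}$ of $\{T\}$. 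If $\diam(Z) \leq D$ for all $Z \in \mathcal{Z}$, then each $Z$ lies in a ball of radius $D+1$ centred at a point of the orbit $G v_0$; since $\phi^{-1}(B_{D+1}(g v_0)) = g\,\phi^{-1}(B_{D+1}(v_0))$, the family $\phi^{-1}(\mathcal{Z})$ is coarsely a subfamily of the one-element family $\{X_r\}$ with $r = D+1$, where $X_r := \phi^{-1}(B_r(v_0)) \subseteq G$. By Coarse Permanence we are reduced to proving $\{X_r\} \in \fR$ for every $r \geq 0$.

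This last point is the heart of the matter, and is exactly what the Bass--Serre argument of \cite{guentner} supplies. The space $X_r$ is a set-theoretically disjoint union of cosets of conjugates of $A$ and $B$ lying within tree-distance $r$ of $v_0$; each such coset has regular FDC by the first paragraph, but cosets attached to a common vertex of $T$ are not metrically separated, so $X_r$ does not regularly decompose over its cosets directly. One proceeds by induction on $r$: the outermost cosets of $X_r$ are grouped into ``bundles'' according to the vertex to which they attach, and Union Permanence (\cref{cor:rFDC_permanence}) is applied to the decomposition of $X_r$ into $X_{r-1}$ together with these bundles, using a metric neighbourhood of $X_{r-1}$ as a ``core'': that neighbourhood is coarsely equivalent to $X_{r-1}$ and so lies in $\fR$ by the inductive hypothesis, while different bundles become $r'$-disjoint once the core is removed, since a short $G$-path joining two bundles projects to a walk in $T$ forced to pass near the common ancestor of the bundles, a vertex lying in $X_{r-1}$. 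I expect the main obstacle to be precisely this bootstrapping: it is tempting to note that $X_r$ is a subspace of $G$ and invoke Coarse Permanence to get $\{X_r\} \in \fR$, but that is circular because $\{G\} \in \fR$ is the goal, and the bundles are themselves amalgam-like and need their own treatment. Organising the induction so that it uses only the permanence properties already available, and handling the bundles, is the delicate step carried out in \cite{guentner}. (Alternatively, one may observe that $G = A *_C B$ is hyperbolic relative to $\{A, B\}$ and appeal to \cite{ramras-ramsey}.) Once $\{G\} \in \fR$ is established, the statement for arbitrary amalgamated free products of regular FDC groups follows.
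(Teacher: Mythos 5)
Your argument is essentially the paper's: the paper deduces this corollary directly from \cite[Theorems 7.2.1, 7.2.3, 7.2.5 and Corollary 7.2.4]{guentner} once Coarse, Fibering, Finite Union, Union and Limit Permanence for regular FDC are in place, and your Bass--Serre tree/fibering sketch is precisely the argument behind that citation, with the same technical core deferred to \cite{guentner}. (Only your parenthetical alternative is off: $A *_C B$ is in general not hyperbolic relative to $\{A,B\}$ unless $C$ satisfies strong conditions such as finiteness, so the \cite{ramras-ramsey} route does not cover arbitrary amalgams; your main line does not rely on it.)
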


\begin{cor}
	A group acting on a tree has regular FDC if and only if all of the vertex stabilizers have regular FDC.
\end{cor}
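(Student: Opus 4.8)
The statement is an equivalence. The forward (``only if'') implication is immediate: a vertex stabilizer $G_v$ is a subgroup of $G$ and, equipped with a proper left-invariant metric, includes isometrically (hence as a coarse embedding) into $G$, so it has regular FDC by Coarse Permanence (\cref{thm:coarseinv}).

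For the converse, the plan is to rebuild $G$ from the vertex stabilizers using only the permanence properties already proved. After passing to the barycentric subdivision of $T$ we may assume $G$ acts without inversions, so by Bass--Serre theory $G=\pi_1(\mathcal G)$ for a graph of groups $\mathcal G=(\Gamma,\{G_v\},\{G_e\})$ with $\Gamma=G\backslash T$ and with vertex and edge groups the stabilizers in $T$ of lifts of the vertices and edges of $\Gamma$; in particular every vertex group has regular FDC by hypothesis, and every edge group has regular FDC by \cref{thm:coarseinv}. First I would reduce to the case of a \emph{finite} graph of groups: writing $\Gamma$ as an increasing union of finite connected subgraphs $\Gamma_n$ and using that the inclusion of a subgraph of groups induces an injection on fundamental groups, the equality $G=\bigcup_n\pi_1(\mathcal G|_{\Gamma_n})$ exhibits the (countable) group $G$ as an increasing union of subgroups whose vertex groups are still among the $G_v$; so by \cref{cor:direct_union} it suffices to treat each $\pi_1(\mathcal G|_{\Gamma_n})$, i.e. to assume $\Gamma$ finite. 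Then, collapsing a spanning tree and adding its edges back one at a time, $\pi_1(\mathcal G)$ is built from the vertex groups by finitely many amalgamated free products over edge groups, followed by finitely many HNN extensions. Amalgamated free products of groups with regular FDC again have regular FDC (the corollary on free products with amalgamation, just above), so the whole reduction comes down to a single HNN extension $H=A\ast_C$ in which $A$ already has regular FDC.

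To handle the HNN step, consider the homomorphism $\psi\colon H\to\bbZ$ that kills $A$ and sends the stable letter to $1$, and let $N=\ker\psi$. Since $\psi$ vanishes on every conjugate of $A$, the group $N$ acts on the Bass--Serre tree of $H$ with all vertex stabilizers equal to conjugates of $A$ and with quotient an infinite path; hence $N=\pi_1$ of a graph of groups over that path with all vertex groups isomorphic to $A$, so $N$ is an increasing union of finite amalgamated products of copies of $A$ over copies of $C$. Each such finite amalgam has regular FDC (iterate the amalgam corollary), so $N$ has regular FDC by \cref{cor:direct_union}, and finally $H$ has regular FDC by \cref{cor:groupextension} applied to $1\to N\to H\to\bbZ\to1$, using that $\bbZ$ has regular FDC. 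This completes the induction.

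The only genuinely delicate point is the HNN case, precisely because ``closed under HNN extensions'' is not one of the listed permanence properties; the trick above — realizing an HNN extension as an extension of $\bbZ$ by an infinite amalgam — is what makes it go through, and everything else (the subdivision to remove inversions, the injectivity of subgraph fundamental groups that turns the exhaustion into an honest direct union, the Bass--Serre bookkeeping for $N$) is routine. I would also point out that this argument is nothing but the specialization to regular FDC of the general inheritance result of \cite{guentner}: any property of metric families that holds for all families of finite asymptotic dimension and satisfies Coarse, Fibering, Finite Union, Union and Limit Permanence — which regular FDC does, by \cref{thm:coarseinv}, \cref{thm:fibering} and \cref{cor:rFDC_permanence} — is inherited by a group acting on a tree from its vertex stabilizers.
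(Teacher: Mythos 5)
Your argument is correct, but it is not the paper's. The paper gives no direct proof of this corollary: it simply observes that regular FDC satisfies Coarse, Fibering, Finite Union, Union and Limit Permanence (\cref{thm:coarseinv}, \cref{thm:fibering}, \cref{cor:rFDC_permanence}) and then quotes the general inheritance results of \cite{guentner}, of which the tree-action statement is one instance --- exactly the route you mention only in your closing paragraph; Guentner's argument works geometrically with the action on the tree via Union/Limit Permanence rather than through Bass--Serre normal forms. Your self-contained alternative --- exhaust the quotient graph of groups by finite connected subgraphs and invoke \cref{cor:direct_union}, handle the spanning-tree edges by the corollary on free products with amalgamation, and dispose of each HNN extension $H=A\ast_C$ by writing it as an extension $1\to N\to H\to\bbZ\to 1$ where $N=\ker(\psi)$ is a direct union of finite iterated amalgams of conjugates of $A$ over conjugates of $C$, then applying \cref{cor:direct_union} and \cref{cor:groupextension} --- is a legitimate derivation from the corollaries already established in the paper, and the kernel-of-the-map-to-$\bbZ$ trick is the genuinely nontrivial ingredient, since closure under HNN extensions is not among the listed permanence statements. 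What your route buys is independence from the unproved-here general theorem of \cite{guentner}; what the paper's route buys is brevity and uniformity across all five group-theoretic corollaries. One small point to patch: after barycentric subdivision the stabilizer of a midpoint vertex is the \emph{setwise} stabilizer of the original edge, which need not lie in any original vertex stabilizer, so it does not have regular FDC ``by hypothesis'' as you wrote; it contains the pointwise edge stabilizer with index at most $2$, hence is coarsely equivalent to it, and that subgroup embeds in a vertex stabilizer, so Coarse Permanence (\cref{thm:coarseinv}) closes this gap.
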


Since regular FDC satisfies all of the permanence properties that FDC does, the proofs that the following groups have regular FDC are precisely the same as in \cite{rigidity} and \cite{fdc}.

\begin{theorem}\label{thm:group_examples}
	The following classes of groups have regular FDC:
	\begin{enumerate}

		\item\label{item1:thm:group_example} Elementary amenable groups.
		
		\item Countable subgroups of $GL_n(R)$, where $R$ is any commutative ring with unit.
		
		\item Countable subgroups of virtually connected Lie groups.
	\end{enumerate}
\end{theorem}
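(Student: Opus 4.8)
The plan is to transcribe, case by case, the proofs of the corresponding statements for FDC given in \cite{rigidity} and \cite{fdc}, verifying that the only properties of the class of FDC families those arguments invoke are: that every metric family of finite asymptotic dimension belongs to the class; Coarse Permanence, Fibering Permanence, Finite Amalgamation Permanence, Finite Union Permanence, Union Permanence and Limit Permanence; and the group-theoretic consequences derived from them (closure under group extensions, countable direct unions, amalgamated free products, and inheritance of the property by a group acting on a tree from its vertex stabilizers). By \cref{thm:coarseinv}, \cref{thm:fibering}, \cref{cor:rFDC_permanence}, \cref{cor:direct_union}, \cref{cor:groupextension} and the corollaries following it, regular FDC satisfies all of these; wherever \cite{rigidity} or \cite{fdc} invoke ``Coarse Invariance'' or ``Subspace Permanence'' one substitutes Coarse Permanence (cf.\ the remark after \cref{def:coarse_permanence}). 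With this in hand the three cases are handled as follows.

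For (1), I would use Chou's transfinite description of the class of elementary amenable groups: it is the increasing union $\bigcup_\alpha \mcB_\alpha$, where $\mcB_0$ consists of all finite and all abelian groups, $\mcB_{\alpha+1}$ is the class of groups obtained from $\mcB_\alpha$ by one group extension or one countable ascending union, and $\mcB_\lambda=\bigcup_{\beta<\lambda}\mcB_\beta$ at limit ordinals. One proves by transfinite induction on $\alpha$ that every group in $\mcB_\alpha$ has regular FDC: a finite group is bounded, hence in $\fB\subseteq\fR$; a countable abelian group is the directed union of its finitely generated subgroups, each of which has finite asymptotic dimension, so it lies in $\fR$ by \cref{cor:direct_union}; the successor step is immediate from \cref{cor:groupextension}, and the union step from \cref{cor:direct_union}.

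For (2), by \cref{cor:direct_union} it suffices to treat a finitely generated subgroup $\Gamma$, which lies in $GL_n(A)$ for a finitely generated (hence Noetherian) subring $A\subseteq R$. Let $\N\subseteq A$ be the nilradical, which is nilpotent; then $\Gamma\cap\ker\bigl(GL_n(A)\to GL_n(A/\N)\bigr)$ is a countable nilpotent group, hence the directed union of its finitely generated nilpotent subgroups, each of finite asymptotic dimension, so it has regular FDC by \cref{cor:direct_union}. By extension permanence it then suffices to treat the image of $\Gamma$ in $GL_n(A/\N)$, so we may assume $A$ reduced, whence $A$ embeds in the finite product of the finitely generated domains $A/\mathfrak{p}$ over the minimal primes $\mathfrak{p}$. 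Passing to fraction fields, using that a finite product of metric families with regular FDC has regular FDC (a consequence of Fibering Permanence, as in the proof of \cref{thm:amalgamation-general}) together with Coarse Permanence, and applying \cref{cor:direct_union} once more, one reduces to a finitely generated subgroup of $GL_n(K)$ with $K$ a finitely generated field. This last case is the core argument of \cite{fdc}: one inducts on the transcendence degree of $K$, the base case being $S$-arithmetic groups over number fields (which act properly on a finite product of symmetric spaces and Euclidean buildings and hence have finite asymptotic dimension), or finite groups in characteristic $p$, and the inductive step being handled by actions on Bruhat--Tits buildings of finite asymptotic dimension with congruence-subgroup stabilizers, absorbed by Fibering Permanence together with extension and direct-union permanence. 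Case (3) then follows: by \cref{cor:direct_union} one reduces to a finitely generated subgroup of a virtually connected Lie group, which by the structure theory used in \cite{fdc} has a finite-index subgroup that is an extension of a finitely generated linear group by a finitely generated nilpotent group; now (2), extension permanence, and Coarse Permanence applied to the quasi-isometry with a finite-index subgroup (via \cref{thm:coarseinv}(2)) finish the argument.

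The step I expect to be the main obstacle is (2). Although the claim is that ``the same proof works'', the linear-groups argument of \cite{fdc} is genuinely intricate, and the substance here is the careful check that it never appeals to a feature peculiar to FDC in place of one of the permanence properties established above — equivalently, that the congruence-subgroup and building-action bookkeeping stays entirely within the operations under which $\fR$ is closed. Cases (1) and (3) are, by contrast, routine transfinite and finite inductions powered by extension permanence and direct-union permanence.
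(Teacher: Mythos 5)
Your proposal is correct and follows essentially the same route as the paper: the paper's proof likewise reduces (1) to the closure of the class of elementary amenable groups under extensions and countable direct unions starting from groups of finite asymptotic dimension, handles (2) by rerunning the arguments of \cite{rigidity} and \cite{fdc} verbatim with FDC replaced by regular FDC (using Fibering Permanence and the nilpotent-kernel reduction for general commutative rings), and deduces (3) from (2) via the finite-index and extension structure of subgroups of connected Lie groups. The only difference is that you spell out more of the internal bookkeeping of the cited linear-group argument, which the paper simply cites.
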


\begin{proof}
	The class of elementary amenable groups is the smallest class of groups that contains all finite groups and all finitely generated abelian groups, and is closed under extensions and countable direct unions. Since all finite groups and all finitely generated abelian groups have finite asymptotic dimension, they have regular FDC. Therefore, elementary amenable groups have regular FDC by Corollaries~\ref{cor:direct_union} and~\ref{cor:groupextension}.

By \cref{cor:direct_union}, to prove that countable subgroups of $GL_n(R)$ have regular FDC it suffices to show that all finitely generated subgroups of $GL_n(R)$ have regular FDC. When $R$ is a field, the proof makes use of the Fibering Theorem~\ref{thm:fibering} and is exactly the same as the proof of \cite[Theorem 3.1]{rigidity}, except that in the proof of \cite[Lemma 3.9]{rigidity} FDC has to be replaced by regular FDC. For the generalization to linear groups over commutative rings, the proof is analogous to the proof of \cite[Theorem 5.2.2]{fdc}, making use of the fact that nilpotent groups are elementary amenable and hence have regular FDC by \eqref{item1:thm:group_example} and the fact that regular FDC is closed under group extensions (\cref{cor:groupextension}).

A countable subgroup of a virtually connected Lie group has a finite index subgroup that is contained in a connected Lie group. A subgroup of a connected Lie group can be written as an extension with linear quotient and abelian kernel, and thus has regular FDC.
\end{proof}

\begin{example}\label{rem:iterated}
	Included in the class of elementary amenable groups are the iterated wreath products of the infinite cyclic group.
Let $G_0 = \bbZ$ and,  for $n \geq 1$, let $G_n = G_{n-1} \wr \, \bbZ$, the wreath product of $G_{n-1}$ with $\bbZ$.
There is a natural inclusion $G_n \hookrightarrow G_{n+1}$.  Define $G_\omega$ to be the direct union $\bigcup_{n \geq 0} G_n$.
By \cref{thm:group_examples}(1),
$G_n$, $n \geq 0$, and $G_\omega$ have regular FDC.
Note that
for $n > 1$ the group $G_n$ is {\it not} a linear group,  \cite[Corollary 15.1.5]{robinson}.
Furthermore, $G_\omega$  is not solvable because it
contains $G_n$ for every $n$ and $G_n$ has derived length $n+1$.
\end{example}

Since regular FDC satisfies Coarse Permanence, Fibering Permanence, Finite Union Permanence, and Union Permanence, it is an \emph{axiomatically extendable} property of metric families, as defined by Ramras and Ramsey~\cite[Definition 3.7]{ramras-ramsey}. Such a property is extendable to {\it relatively hyperbolic groups}~\cite[Theorem 3.9]{ramras-ramsey}. That is, we obtain the following corollary.

\begin{cor}
	If $G$ is relatively hyperbolic with respect to the peripheral subgroups $H_1,\ldots, H_n$, and each $H_i$ has regular FDC, then $G$ has regular FDC.
\end{cor}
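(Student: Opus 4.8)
The plan is to deduce this directly from the extendability machinery of Ramras and Ramsey \cite{ramras-ramsey}, so that the corollary becomes essentially a citation once the hypotheses are verified. Recall that in \cite[Definition 3.7]{ramras-ramsey} a property $\mcP$ of metric families is called \emph{axiomatically extendable} provided it satisfies Coarse Permanence, Fibering Permanence, Finite Union Permanence, and Union Permanence. The first step I would take is to observe that regular FDC has all four of these: Coarse Permanence is \cref{thm:coarseinv}, Fibering Permanence is \cref{thm:fibering}, and Finite Union Permanence together with Union Permanence are part of \cref{cor:rFDC_permanence}. Hence regular FDC is an axiomatically extendable property of metric families.

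Next I would invoke \cite[Theorem 3.9]{ramras-ramsey}, which states that an axiomatically extendable property passes from the peripheral subgroups of a relatively hyperbolic group to the whole group: if $G$ is relatively hyperbolic with respect to $H_1, \dots, H_n$ and each $H_i$ has the property, then $G$ does as well. Applying this with $\mcP$ taken to be regular FDC and using the hypothesis that each $H_i$ has regular FDC yields that $G$ has regular FDC, completing the argument. (Here one should keep in mind that asymptotic dimension, FDC, and regular FDC are well-defined coarse invariants of countable groups, as discussed just before \cref{cor:direct_union}, so it makes sense to speak of a group having regular FDC.)

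The main point requiring care — and the only genuine obstacle — is not in the corollary itself but in matching conventions: one must check that the formulations of Coarse Permanence, Fibering Permanence, Finite Union Permanence, and Union Permanence used in \cite{ramras-ramsey} agree with \cref{def:coarse_permanence}, \cref{def:fibering}, \cref{def:finunion}, and \cref{def:union}. This is routine; in particular the potential discrepancy between our Coarse Permanence and the ``Coarse Invariance plus Subspace Permanence'' formulation is exactly the content of the Remark following \cref{def:coarse_permanence}. With those identifications in hand, no additional argument beyond the two citations is needed.
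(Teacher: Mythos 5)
Your argument is correct and is essentially identical to the paper's: both note that Coarse, Fibering, Finite Union and Union Permanence (Theorems~\ref{thm:coarseinv}, \ref{thm:fibering} and Corollary~\ref{cor:rFDC_permanence}) make regular FDC axiomatically extendable in the sense of \cite[Definition 3.7]{ramras-ramsey}, and then apply \cite[Theorem 3.9]{ramras-ramsey}. No further comment needed.
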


Regular FDC also behaves well with respect to taking quotients by finite groups (\cref{thm:quotients1} below).
This permanence property is not known for FDC and was one of the main motivations for our introduction of regular FDC.

Let $X$ be a metric space and $F$ a finite group acting isometrically on $X$. Recall that on the quotient space $F\backslash X$ we use the following metric.
\[d(Fx,Fx') \colonequals \min_{h\in F}d_X(x,hx')\]

\begin{rem}\label{rem:finite-quotient}
	With this choice of metric the quotient map $q_{X}:X \to F\backslash X$ is contracting and is therefore a coarse map with control function equal to the identity on $[0,\infty)$. Furthermore, an $F$-equivariant coarse map $f:X \to Y$ with control function $\rho$ induces a coarse map $\bar{f}:F\backslash X \to F\backslash Y$ that also has control function equal to $\rho$ and makes the following diagram commute.
\[
\xymatrix{
	X \ar[r]^f \ar[d]_{q_{X}} & Y \ar[d]^{q_{Y}}
	\\
	F\backslash X \ar[r]^{\bar{f}} & F\backslash Y
}
\]
\end{rem}

\begin{defi}\label{defi:quotients}
A collection, $\fC$, satisfies \emph{Finite Quotient Permanence} if the following holds. If $\mcX\in\fC$ and $F$ is a finite group that acts isometrically on every $X\in\mcX$, then the family $F\backslash \mcX:=\{F\backslash X\mid X\in\mcX\}$ is also in $\fC$.
\end{defi}

\begin{prop}
	\label{prop:FQPfamily}
Let $\fC$ be a collection that satisfies Finite Quotient Permanence. Let $\mcX=\{X_i\}_{i\in I}$ be in $\fC$, and let $\mcF=\{F_i\}_{i\in I}$ be a collection of finite groups such that there exists an integer $M$ with $\sup_i|F_i|\leq M$ and such that $F_i$ acts isometrically on $X_i$, for each $i\in I$. Then $\mcF\backslash \mcX:=\{F_i\backslash X_i\mid i\in I\}$ is in $\fC$.
\end{prop}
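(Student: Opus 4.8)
The plan is to deduce the statement from a \emph{single} invocation of Finite Quotient Permanence (\cref{defi:quotients}), by replacing the varying collection $\mcF$ of finite groups with one fixed finite group that acts isometrically on every member of $\mcX$. The crucial observation is that, since $\sup_i|F_i|\le M$, there are only finitely many isomorphism classes of groups of order at most $M$ (a group of order $m$ is determined by its multiplication table, of which there are at most $m^{m^2}$). Pick representatives $G_1,\dots,G_k$ of these classes and set $H:=G_1\times\cdots\times G_k$. Then $H$ is a finite group and each coordinate projection $\mathrm{pr}_j\colon H\to G_j$ is surjective; in particular, for every $i\in I$ there is a surjective homomorphism $\pi_i\colon H\to F_i$, obtained by composing $\mathrm{pr}_j$ (for the index $j$ with $G_j\cong F_i$) with an isomorphism $G_j\xrightarrow{\ \sim\ }F_i$.

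First I would endow each $X_i$ with the isometric $H$-action given by $h\cdot x:=\pi_i(h)\cdot x$, using the given isometric action of $F_i$ on $X_i$; this is a group action because $\pi_i$ is a homomorphism, and it is by isometries because $F_i$ acts by isometries. Since $\pi_i$ is onto, the $H$-orbit and the $F_i$-orbit of any point of $X_i$ coincide, and hence for all $x,x'\in X_i$,
\[
d(Hx,Hx')\;=\;\min_{h\in H}d_{X_i}(x,hx')\;=\;\min_{g\in F_i}d_{X_i}(x,gx')\;=\;d(F_i x,F_i x').
\]
Thus the tautological bijection $H\backslash X_i\to F_i\backslash X_i$ is an isometry, and consequently the metric family $H\backslash\mcX=\{H\backslash X_i\mid i\in I\}$ is the same as $\mcF\backslash\mcX$.

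The proof then finishes at once: $\mcX\in\fC$, and $H$ is a finite group acting isometrically on every member of $\mcX$, so Finite Quotient Permanence yields $H\backslash\mcX\in\fC$, that is, $\mcF\backslash\mcX\in\fC$. There is no real obstacle here; the only points deserving care are bookkeeping ones — namely that the isometric $H$-action on $X_i$ is allowed to depend on $i$ (through the choice of $G_j\cong F_i$ and of the isomorphism), which \cref{defi:quotients} permits, and recording the elementary fact that there are finitely many groups of order at most $M$ up to isomorphism so that $H$ is genuinely finite. This provides, for an arbitrary collection $\fC$ satisfying Finite Quotient Permanence, the family-level analogue of \cref{prop:asdim_quotient}.
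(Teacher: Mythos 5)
Your proof is correct and is essentially the paper's own argument: the paper likewise assembles one fixed finite group $F=\bigoplus_j G_j$ from representatives of the finitely many isomorphism types occurring among the $F_i$, lets it act on each $X_i$ through the relevant summand (the other summands acting trivially, i.e.\ exactly your action via the surjection $\pi_i$), notes $F\backslash\mcX=\mcF\backslash\mcX$, and applies Finite Quotient Permanence once. No discrepancies worth noting.
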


\begin{proof}
Since the orders of the finite groups $F_i$ have a uniform bound, there are finitely many finite groups,  $G_1, \ldots, G_n$, such that each $F_i$ is isomorphic to one of the $G_j$'s. Let $F=\bigoplus_{j=1}^nG_j$, and define the action of $F$ on $X_i$ by choosing an isomorphism from one of the summands of $F$ to $F_i$ and letting the other summands act trivially. This action of $F$ on $X_i$ is isometric and $F\backslash \mcX=\mcF\backslash \mcX$. Thus, $\mcF\backslash \mcX \in \fC$ since $\fC$ satisfies Finite Quotient Permanence.
\end{proof}

\begin{rem}
It is straightforward to generalize the proof of \cref{prop:asdim_quotient} to show that if $\mcX=\{X_i\}_{i\in I}$ is a metric family with asymptotic dimension at most $n$ and $\mcF=\{F_i\}_{i\in I}$ is a collection of finite groups such that each $F_i$ acts isometrically on $X_i$ and there exists an integer $M$ with $\sup_i|F_i|\leq M$, then $\asdim(\mcF\backslash \mcX) \leq M(n+1)-1$.
\end{rem}

\cref{prop:asdim_quotient} tells us that finite asymptotic dimension satisfies Finite Quotient Permanence.  The next theorem can be thought of as a generalization of this fact. While it is not hard to prove that weak FDC satisfies Finite Quotient Permanence, we do not know whether it holds for FDC.
Note that Finite Quotient Permanence is the only permanence property that we cannot derive from Fibering Permanence.

\begin{thm}\label{thm:quotients1}
The collection $\fR_\alpha$ satisfies Finite Quotient Permanence, for every ordinal $\alpha$. In particular, $\fR$ satisfies Finite Quotient Permanence.
\end{thm}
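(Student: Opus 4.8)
The plan is to prove the statement by transfinite induction on $\alpha$; the assertion for $\fR$ is then immediate since every metric family in $\fR$ lies in $\fR_\alpha$ for some $\alpha$. The case $\alpha=0$ is trivial: by \cref{rem:finite-quotient} the quotient map $q_X\colon X\to F\backslash X$ is contracting, so $F\backslash\mcX$ is bounded whenever $\mcX$ is. Assume Finite Quotient Permanence holds for $\fR_\beta$ for every $\beta<\alpha$, let $\mcX\in\fR_\alpha$, and let $F$ act isometrically on every $X\in\mcX$. Choose a witness: a coarse map $G\colon\mcX\to\mcY$ with $\asdim(\mcY)<\infty$ such that every bounded subfamily of $\mcY$ has inverse image under $G$ lying in $\fR_\beta$ for some $\beta<\alpha$. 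The first difficulty is that $G$ is not assumed $F$-equivariant, so it does not descend to a map out of $F\backslash\mcX$.

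To remedy this I would pass to the metric family $\mcV=\{\,\Map(F,Y)\mid Y\in\mcY\,\}$, where $\Map(F,Y)$ is identified with $Y^{|F|}$ equipped with (say) the $\ell^\infty$-metric and carries the isometric $F$-action $(g\cdot\varphi)(h)=\varphi(g^{-1}h)$, and replace $G$ by the map $\tilde G\colon\mcX\to\mcV$ that sends $g\colon X\to Y$ in $G$ to $\tilde g\colon X\to\Map(F,Y)$, $\tilde g(x)(h)=g(h^{-1}x)$. A direct check gives $\tilde g(g_0x)=g_0\cdot\tilde g(x)$ (so $\tilde G$ is $F$-equivariant), that $\tilde g$ has the same control function as $g$, and — using \cref{cor:asdim_product} together with the bi-Lipschitz equivalence of $\ell^1$ and $\ell^\infty$ on finite products — that $\asdim(\mcV)\leq|F|\asdim(\mcY)<\infty$. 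By \cref{prop:asdim_quotient}, $\mcW:=F\backslash\mcV$ has finite asymptotic dimension, i.e.\ $\mcW\in\fR_1$, and by \cref{rem:finite-quotient} the equivariant coarse map $\tilde G$ induces a coarse map $\bar G\colon F\backslash\mcX\to\mcW$ fitting into a square with the quotient maps that commutes on the nose. It then suffices to show that $\bar G$ is a regular-decomposition witness for $F\backslash\mcX$ over $\bigcup_{\beta<\alpha}\fR_\beta$, i.e.\ that $\bar G^{-1}(\mcB)\in\fR_\beta$ for some $\beta<\alpha$ whenever $\mcB$ is a bounded subfamily of $\mcW$.

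Here the genuine obstacle appears: the preimage under $q_\mcV$ of a bounded subfamily of $\mcW$ need \emph{not} be bounded in $\mcV$, since $F$-orbits can have large diameter, so one cannot simply ``pull back bounded twice''. Instead I would argue directly. Using the commuting square, $\bar g^{-1}(B)=F\backslash A$ where $A:=\tilde g^{-1}(q_\mcV^{-1}(B))$ is $F$-invariant, so it is enough to show the family $\mcA$ of all such $A$ lies in $\fR_\beta$ for some $\beta<\alpha$. Put $D=\sup_{B\in\mcB}\diam(B)$ and, for each nonempty $B\in\mcB$, fix a basepoint $Fv_\ast\in B$; if $F\tilde g(x)\in B$ then some $h_x\in F$ satisfies $d_\infty(\tilde g(x),h_x\cdot v_\ast)\leq D$, and evaluating at $h_x$ yields $g(h_x^{-1}x)\in B_D(v_\ast(e))$. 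Hence $A\subseteq\bigcup_{h\in F}h\cdot g^{-1}(B_D(v_\ast(e)))$. Now $\{\,g^{-1}(B_D(v_\ast(e)))\,\}$ is a subfamily of $G^{-1}$ of the bounded subfamily of all $D$-balls in $\mcY$, hence lies in some $\fR_\beta$ with $\beta<\alpha$ by the witnessing property of $G$; each of its $|F|$ isometric translates then lies in $\fR_\beta$ by Coarse Permanence (\cref{thm:coarseinv}); Finite Union Permanence for $\fR_\beta$ (\cref{thm:finunion2alpha}) gives $\{\bigcup_{h\in F}h\cdot g^{-1}(B_D(v_\ast(e)))\}\in\fR_\beta$; and $\mcA$, being a subfamily of this family, lies in $\fR_\beta$ by Coarse Permanence again. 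Finally the induction hypothesis applied to $\fR_\beta$ gives $F\backslash\mcA\in\fR_\beta$, that is $\bar G^{-1}(\mcB)\in\fR_\beta$, which completes the induction and hence the proof.

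I expect the main difficulty to be precisely this two-stage maneuver: equivariantizing $G$ via $\Map(F,-)$ without leaving $\fR_1$, and then — because $q_\mcV$ does not reflect boundedness — replacing the naive step with the $|F|$-fold covering of $\mcA$ by translates of an honest $G$-preimage of a bounded subfamily, which reduces everything to Finite Union Permanence for the smaller ordinals. This is the point at which the Extension Theorem of \cref{sec:extension} enters the argument, since it is what underlies Finite Union Permanence for $\fR_\alpha$.
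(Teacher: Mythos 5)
Your proof is correct and follows essentially the same route as the paper: equivariantize $G$ via the product $Y^{F}$ with the permutation action, descend to quotients, show that the preimage of a bounded subfamily is covered by $|F|$ pieces each lying in some $\fR_\beta$, and finish with Finite Union Permanence, Coarse Permanence, and the induction hypothesis. The only cosmetic difference is that you cover the invariant preimage in $X$ by $F$-translates of a single set $g^{-1}(B_D(v_\ast(e)))$, whereas the paper covers it by preimages $g^{-1}(B_{h,e})$ of $|F|$ balls in $Y$ obtained by projecting an orbit to the $e$-coordinate; both reduce to the same permanence properties.
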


\begin{proof}	
We prove the theorem by induction on $\alpha$. The case $\alpha=0$ is clear, since if $\mcX$ is bounded, then so is $F\backslash\mcX$ for any finite group $F$ acting isometrically on every $X \in \mcX$. 

Now assume that the statement is true for all $\beta <\alpha$. Let $\mcX \in \fR_\alpha$ be given, and let $F$ be a finite group that acts isometrically on every $X \in \mcX$. By definition, there is a $\mcY\in\fR_1$ and a coarse map $G\colon \mcX\to \mcY$ such that the inverse image of each bounded subfamily of $\mcY$ lies in $\fR_\beta$ for some $\beta<\alpha$. Since $G$ is coarse, there is a control function $\rho\colon [0,\infty)\to[0,\infty)$ with $d_Y(g(x),g(x'))\leq \rho(d_X(x,x'))$ for all $g\colon X\to Y$ in $G$ and all $x,x'\in X$. 

Let $g\colon X\to Y$ be a map in $G$. (Recall that, by the definition of a coarse map of metric families, for each $X\in \mcX$ there is at least one $g\in G$ mapping $X$ to some $Y\in \mcY$.) 
Define a map $g_{F}\colon X\to \prod_{F}Y$ by $g_{F}(x)= \big(g(h^{-1}x)\big)_{h\in F}$. 
Note that $g_{F}$ is $F$-equivariant, where $F$ acts on $\prod_{F}Y$ by permuting factors. Equip $\prod_{F}Y$ with the $\ell^1$-metric,~$d^1$.
Then, $F$ acts isometrically on $\prod_{F}Y$, and
	\begin{align*}
		d^1\big(g_{F}(x),g_{F}(x')\big) &=\sum_{h\in F} d_{Y}\big(g(h^{-1}x),g(h^{-1}x')\big) \leq \sum_{h\in F} \rho(d_{X}(h^{-1}x,h^{-1}x')) \\
		&=|F|\cdot\rho(d_{X}(x,x')).
	\end{align*}
Hence, the map $g_{F}$ is coarse with control function $\rho'=|F|\cdot \rho$. Thus, $G_{F}=\{g_{F}\}$ defines a coarse map from $\mcX$ to the metric family $\mcY_{F}=\big\{\prod_{F}Y \mid Y\in\mcY\}$ with control function $\rho'$. Furthermore, since $g_{F}$ is $F$-equivariant, it induces a coarse map $\bar{g}_{F}:F\backslash X\to F\backslash\prod_{F}Y$ also with control function $\rho'$ (see \cref{rem:finite-quotient}). Therefore, $\overline{G}_{F}=\{\bar{g}_{F}\}$ defines a coarse map from $F\backslash\mcX$ to  $F\backslash \mcY_F$ with control function $\rho'$. Let $Q=\{q:X \to F\backslash X \mid X\in\mcX\}$ and $P=\{p:\prod_{F}Y \to F\backslash \prod_{F}Y \mid Y\in\mcY\}$, be the quotient maps. Then we have the following commutative diagram (see \cref{rem:finite-quotient}).
\[
\xymatrix{
	\mcX \ar[r]^{G_{F}} \ar[d]_Q & \mcY_{F} \ar[d]^P
	\\
	F\backslash\mcX \ar[r]^{\overline{G}_{F}} & F\backslash\mcY_F
}
\]

By \cref{cor:asdim_product} and \cref{prop:asdim_quotient}, $\asdim \big(F\backslash \mcY_F\big)\leq |F|\cdot \big(|F|\asdim (\mcY) + 1 \big)-1$. Thus, the family $F\backslash \mcY_F$ has finite asymptotic dimension. Therefore, by definition, to prove that $F\backslash\mcX$ is in $\fR_\alpha$ it suffices to show that for every bounded subfamily $\mcB$ of $F\backslash \mcY_F$, the inverse image under $\overline{G}_{F}$ is in $\fR_\beta$, for some $\beta < \alpha$. 
Using the commutative diagram above, every $A \in (P \circ G_{F})^{-1}(\mcB)$ is an $F$-invariant subspace of some $X\in\mcX$. Thus, $F\backslash(P \circ G_{F})^{-1}(\mcB)=Q\big((P \circ G_{F})^{-1}(\mcB)\big)=(\overline{G}_{F})^{-1}(\mcB)$. Therefore, if we show that $(P \circ G_{F})^{-1}(\mcB)$ lies in $\fR_\beta$ for some $\beta<\alpha$, then it will follow from the induction assumption that $(\overline{G}_{F})^{-1}(\mcB)$ is in $\fR_\beta$, as desired. 

Let $R$ be the uniform bound on the diameters of the elements of $\mcB$. For each $B \in \mcB$, pick a point $z\in p^{-1}(B)$, then
\[p^{-1}(B)\subseteq \bigcup_{h\in F} B_{2R}(hz)\subseteq \prod_{F}Y.\]
For $k\in F$, let $\pi_{k}\colon \prod_FY\to Y$ be projection to the $k$th coordinate. Let $B_{h,k}:=B_{2R}(\pi_{k}(hz))\subseteq Y$. Then
\[B_{2R}(hz)\subseteq \prod_{k\in F}B_{h,k}.\]
Suppose that $g_{F}(x)\in B_{2R}(hz)$. Then, by definition, 
\[ d^1\big(g_{F}(x),hz\big)=\sum_{k\in F} d_{Y}\big(g(k^{-1}x),\pi_k(hz)\big)\leq 2R, \]
and so $d_{Y}\big(g(x),\pi_e(hz)\big)\leq 2R$, where $e$ denotes the identity element of $F$. That is, $g(x) \in B_{h,e}$.
Therefore, $g_F^{-1}(B_{2R}(hz))\subseteq g^{-1}(B_{h,e})$ for each $h\in F$. Thus, 
\[ (p\circ g_{F})^{-1}(B) \subseteq g_{F}^{-1}\left(\bigcup_{h\in F}B_{2R}(hz) \right)= \bigcup_{h\in F}g_{F}^{-1}(B_{2R}(hz)) \subseteq \bigcup_{h\in F}g^{-1}(B_{h,e}).\]
This implies that there exist bounded subfamilies $\mcB_1,\dots,\mcB_{|F|}$ of $\mcY$ such that every metric space in $(P \circ G_{F})^{-1}(\mcB)$ is a subspace of $\bigcup_{j=1}^{|F|} X_j$, for some $X_j \in G^{-1}(\mcB_j)$, $1\leq j \leq |F|$.  By the definition of $G$, since each $\mcB_j$ is bounded there is a $\beta<\alpha$ such that $G^{-1}(\mcB_j) \in \fR_\beta$ for every $j$, $1\leq j \leq |F|$. Since $\fR_\beta$ satisfies Finite Union Permanence (\cref{thm:finunion1}) and Coarse Permanence (\cref{thm:coarseinv}), $(P \circ G_{F})^{-1}(\mcB)$ is also in $\fR_\beta$. This completes the proof. 
\end{proof}

We immediately obtain the following corollary by applying Finite Quotient Permanence for $\fR$ (using the version from \cref{prop:FQPfamily}) to the family $\{G_F~|~ F\leq G, |F|< \infty\}$, where $G_F=G$ for every $F$, and $F$ acts on $G_F$ by left translation.

\begin{cor}\label{thm:quotients}
Let $G$ be a countable group that has regular FDC and a global upper bound on the orders of its finite subgroups. Then the metric family $\{F\backslash G~|~F\leq G, |F|< \infty \}$ has regular FDC.
\end{cor}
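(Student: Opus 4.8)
The plan is to obtain this as an immediate consequence of Finite Quotient Permanence for $\fR$ in its family-indexed form, \cref{prop:FQPfamily}, exactly as the remark preceding the statement suggests. First I would fix a proper left-invariant metric on $G$ (unique up to coarse equivalence, as recalled above). Let $I=\{F\leq G\mid |F|<\infty\}$ index the finite subgroups of $G$, and for each $F\in I$ let $G_F$ denote this metric space $G$. As a set of metric spaces the family $\mcX:=\{G_F\mid F\in I\}$ is simply $\{G\}$, so $\mcX\in\fR$ because $G$ has regular FDC.

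Next I would record the actions. Each finite subgroup $F\leq G$ acts on $G_F$ by left translation, and since the metric on $G$ is left-invariant this action is by isometries. By hypothesis there is an integer $M$ bounding the order of every finite subgroup of $G$, so the collection of finite groups $\mcF:=\{F\}_{F\in I}$ satisfies $\sup_{F\in I}|F|\leq M$, which is precisely the uniformity hypothesis of \cref{prop:FQPfamily}. Since $\fR$ satisfies Finite Quotient Permanence by \cref{thm:quotients1}, \cref{prop:FQPfamily} applies to $\mcX$ and $\mcF$ and yields that
\[
\mcF\backslash\mcX=\{F\backslash G_F\mid F\in I\}=\{F\backslash G\mid F\leq G,\, |F|<\infty\}
\]
lies in $\fR$; that is, this metric family has regular FDC.

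There is no genuine obstacle in this argument: all of the content resides in \cref{thm:quotients1} and its reformulation \cref{prop:FQPfamily}. The one point worth emphasizing is the role of the global bound on the orders of the finite subgroups of $G$: it is exactly this bound that allows \cref{prop:FQPfamily} to replace the varying family $\{F\}_{F\in I}$ of acting groups by a single finite group acting on every member of $\mcX$, which is the device by which the space-level statement of \cref{thm:quotients1} is promoted to the family-level statement. Without such a bound this line of reasoning does not apply.
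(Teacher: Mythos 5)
Your proposal is correct and follows exactly the paper's argument: apply \cref{prop:FQPfamily} (the family-indexed form of Finite Quotient Permanence, \cref{thm:quotients1}) to the family $\{G_F \mid F\leq G,\ |F|<\infty\}$ with $G_F=G$ and $F$ acting by left translation, the global bound on orders of finite subgroups supplying the uniformity hypothesis. Nothing further is needed.
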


In order to apply the first author's injectivity results for algebraic $K$- and $L$-theory \cite[Theorems 8.1 and 9.1]{KasFDC} to a countable group $G$ that has FDC, a finite dimensional model for $\underbar EG$ (the universal space for proper $G$-actions) and a global upper bound on the orders of its finite subgroups, one must verify that the metric families $\{F\backslash G~|~F\leq G, |F|< n \}$ have FDC for every $n\in\bbN$. The first author achieved this in \cite[Theorem 4.13]{KasFDC} for every finitely generated subgroup of $GL_n(R)$, where $R$ is a commutative ring with unit. The proof depended on a very technical proof that a stronger version of this is true for solvable groups. Furthermore, that proof cannot be generalized to elementary amenable groups.
It is possible to circumvent this technical condition for solvable groups and prove injectivity for linear groups by using the powerful theorem that solvable groups satisfy the Farrell--Jones Conjecture. (For details see \cite[Theorem 1.1]{KasLin}, where injectivity is proved for every countable subgroup $G$ of a linear group with a finite dimensional model for $\underbar EG$.) 
However, since all of these classes of groups have regular FDC (\cref{thm:group_examples}) and regular FDC implies FDC (\cref{lem:FDC}), \cref{thm:quotients} yields a unified proof that the metric families $\{F\backslash G~|~F\leq G, |F|< n \}$ have FDC for such groups for every $n\in\bbN$. In this sense, \cref{thm:quotients} shows that regular FDC is a useful concept for establishing split injectivity of assembly maps and might lead to new injectivity results once FDC is known for more classes of groups (see \cref{thm:inj}).


\bibliographystyle{amsalpha}
\bibliography{FDC}

\providecommand{\bysame}{\leavevmode\hbox to3em{\hrulefill}\thinspace}
\providecommand{\MR}{\relax\ifhmode\unskip\space\fi MR }
\providecommand{\MRhref}[2]{%
  \href{http://www.ams.org/mathscinet-getitem?mr=#1}{#2}
}
\providecommand{\href}[2]{#2}
\begin{thebibliography}{BDLM08}

\bibitem[BD04]{BD-trees}
G.~Bell and A.~Dranishnikov, \emph{On asymptotic dimension of groups acting on
  trees}, Geom. Dedicata \textbf{103} (2004), 89--101. \MR{2034954}

\bibitem[BD08]{bell-dranish-asdim}
\bysame, \emph{Asymptotic dimension}, Topology Appl. \textbf{155} (2008),
  no.~12, 1265--1296. \MR{2423966}

\bibitem[BDLM08]{BDLM}
N.~Brodskiy, J.~Dydak, M.~Levin, and A.~Mitra, \emph{A {H}urewicz theorem for
  the {A}ssouad-{N}agata dimension}, J. Lond. Math. Soc. (2) \textbf{77}
  (2008), no.~3, 741--756. \MR{2418302}

\bibitem[BR07]{Bartels}
Arthur Bartels and David Rosenthal, \emph{On the {$K$}-theory of groups with
  finite asymptotic dimension}, J. Reine Angew. Math. \textbf{612} (2007),
  35--57. \MR{2364073 (2009a:19004)}

\bibitem[GTY12]{rigidity}
Erik Guentner, Romain Tessera, and Guoliang Yu, \emph{A notion of geometric
  complexity and its application to topological rigidity}, Invent. Math.
  \textbf{189} (2012), no.~2, 315--357. \MR{2947546}

\bibitem[GTY13]{fdc}
\bysame, \emph{Discrete groups with finite decomposition complexity}, Groups
  Geom. Dyn. \textbf{7} (2013), no.~2, 377--402. \MR{3054574}

\bibitem[{Gue}14]{guentner}
Erik {Guentner}, \emph{{Permanence in coarse geometry.}}, {Recent progress in
  general topology III. Based on the presentations at the Prague symposium,
  Prague, Czech Republic, 2001}, Amsterdam: Atlantis Press, 2014, pp.~507--533
  (English).

\bibitem[Kas15]{KasFDC}
Daniel Kasprowski, \emph{On the {K}-theory of groups with finite decomposition
  complexity}, Proceedings of the London Mathematical Society \textbf{110}
  (2015), no.~3, 565--592.

\bibitem[Kas16]{KasLin}
\bysame, \emph{On the {K}-theory of linear groups}, Annals of K-Theory
  \textbf{1} (2016), no.~4, 441--456.

\bibitem[Lem03]{lemin}
Alex~J. Lemin, \emph{On ultrametrization of general metric spaces}, Proc. Amer.
  Math. Soc. \textbf{131} (2003), no.~3, 979--989 (electronic).

\bibitem[NR]{NR15}
Andrew Nicas and David Rosenthal, \emph{Hyperbolic dimension and decomposition
  complexity}, To appear in a volume of the {\it London Math. Soc. Lecture Note
  Ser.} arXiv:1509.06437.

\bibitem[Rob96]{robinson}
Derek J.~S. Robinson, \emph{A course in the theory of groups}, second ed.,
  Graduate Texts in Mathematics, vol.~80, Springer-Verlag, New York, 1996.
  \MR{1357169}

\bibitem[Roe03]{roe}
John Roe, \emph{Lectures on coarse geometry}, University Lecture Series,
  vol.~31, American Mathematical Society, Providence, RI, 2003. \MR{2007488
  (2004g:53050)}

\bibitem[RR]{ramras-ramsey}
Daniel~A. Ramras and Bobby~W. Ramsey, \emph{Extending {P}roperties to
  {R}elatively {H}yperbolic {G}roups}, to appear in Kyoto J. Math.
  arXiv:1410.0060.

\end{thebibliography}


\end{document}